	\theoremstyle{remark}
	\newtheorem{remark}{Remark}
	\theoremstyle{definition}
	\newtheorem{definition}{D{e}finition}[section]
	\theoremstyle{plain}
	\newtheorem{proposition}[definition]{Proposition}
	\newtheorem{theorem}[definition]{Th{e}or{e}m}
	\newtheorem{corollary}[definition]{Corollary}
	\newtheorem{lemma}[definition]{Lemma}
\begin{document}
	
	\title{Uniqueness results in the inverse spectral Steklov problem}
	\author{Germain Gendron \\[12pt]
		  \small  Laboratoire de Math\'ematiques Jean Leray, UMR CNRS 6629, \\ \small 2 Rue de la Houssini\`ere BP 92208, F-44322 Nantes Cedex 03. \\
		\small Email: germain.gendron@univ-nantes.fr }


	
	\date{\today}

	
	\maketitle


	\begin{abstract}
		
		This paper is devoted to an inverse Steklov problem for a particular class of $n$-dimensional manifolds having
		the topology of a hollow sphere and equipped with a warped product metric. We prove that the knowledge of the Steklov spectrum determines uniquely
		the associated warping function up to a natural invariance.

		
		\vspace{0.5cm}
		
		\noindent \textit{Keywords}. Inverse Calder\'on problem, Steklov spectrum, Weyl-Titchmarsh functions, Nevanlinna theorem, local Borg-Marchenko theorem.

	\end{abstract}

	\newpage

	\tableofcontents

	\newpage
	
	\section{Introduction}
	
	\subsection{The Calder\'on and Steklov problems.}
	Let $(M,g)$ be a smooth compact manifold of dimension $n \geq 2$ with smooth boundary $\partial M$. We consider the Dirichlet problem
	\begin{equation}
	\label{Schr}
	\left\{
	\begin{aligned}
	& -\Delta_g u=\lambda u \:\:{\rm{in}}\:\:M\\
	& \ \ \  u=\psi\:\:{\rm{on}}\:\:\partial M ,
	\end{aligned}\right.
	\end{equation}
	where $\displaystyle\psi\in H^{1/2}(\partial M)$ and $\lambda\in\mathbb{R}$ is assumed to lie outside the Dirichlet spectrum $\sigma(-\Delta_g)$ of the Laplace-Beltrami operator $-\Delta_g$.
	In a local coordinate system $(x^i)_{i=1, ...n}$, $-\Delta_g$ has the expression
	
	\begin{equation*}
	-\Delta_g=-\sum_{1\le i,j\le n}\frac{1}{\sqrt{|g|}}\partial_i\big(\sqrt{|g|}g^{ij}\partial_j\big),
	\end{equation*}
	
	\medskip
	
	\noindent where we have set $|g|=\det(g_{ij})$ and $(g^{ij})=(g_{ij})^{-1}$.
	
	$\quad$
	
	\noindent If $\lambda \notin \sigma(-\Delta_g)$, the Dirichlet problem (\ref{Schr}) has a unique solution  $u \in H^1(M)$, and we can define the so-called
	\textit{Dirichlet-to-Neumann} (DN) operator as the map
	\begin{equation*}
	\begin{array}{ccccc}
	\Lambda_g(\lambda) & : & H^{1/2}(\partial M) & \to & H^{-1/2}(\partial M) \\
	& & \displaystyle\psi & \mapsto &  \displaystyle\frac{\partial u}{\partial\nu} \big|_{\partial M} \ , \\
	\end{array}
	\end{equation*}
	
	\noindent where $\partial_\nu$ is the unit normal derivative with respect to the unit outer normal vector on $\partial M$. This normal derivative has to be understood in the weak sense by :
	\begin{equation*}
	\forall (\psi,\phi)\in H^{1/2}(\partial M)^2\: :\: \langle\Lambda_g(\lambda)\psi,\phi\rangle=\int_{M}\langle du,dv\rangle_g\,d\mathrm{Vol}_g+\lambda\int_{M}uv\,d\mathrm{Vol}_g,
	\end{equation*}
	where $u$ is the unique weak solution of the Dirichlet problem (\ref{Schr}), and where $v$ is any element of $H^1(M)$ such that $v_{|\partial M} = \phi$. When $\psi$ is sufficiently smooth,
	this definition coincides with the usual one in local coordinates, that is
        \begin{equation} \label{DN-Coord}
        \partial_\nu u = \nu^i \partial_i u.
        \end{equation}
	\medskip
	
	\noindent The anisotropic Calder\'on problem can be initially stated as:{\it{ does the knowledge of the DN map $\Lambda_g(\lambda)$ at a fixed frequency $\lambda$ determine uniquely
	the metric $g$ ?}}
	
	\medskip
	
	\noindent Due to a number of gauge invariances, it is well-known that the answer to the above question is negative in general.
	An observation made by Luc Tartar (\cite{kohn1984identification}, p.2) leads to the equality :
	\begin{equation*}
	\Lambda_g(\lambda)=\Lambda_{\psi^*g}(\lambda),
	\end{equation*}
	
	\medskip
	
	\noindent where $\psi:M\to M$ is any smooth diffeomorphism which is equal to the identity on the boundary, (here $\psi^*g$ is the pullback of $g$ by $\psi$). Moreover, in dimension $n=2$ and for $\lambda=0$, there is one more gauge invariance. Indeed, thanks to the conformal invariance of the Laplacian, for every positive function $c$, we have
	\begin{equation*}
	\Delta_{cg}=\frac{1}{c}\Delta_g.
	\end{equation*}
	
	\noindent
	Consequently, the solutions of the Dirichlet problem (\ref{Schr}) associated to the metrics $g$ and $cg$ are the same when $\lambda=0$. Moreover, if we assume that $c\equiv 1$ on the boundary, the unit outer normal vectors on $\partial M$ are also the same for both metrics. Therefore,
	\begin{equation*}
	\Lambda_{cg}(0)=\Lambda_{g}(0),
	\end{equation*}
	and it is not possible to determine uniquely the metric from the DN map.
		
	\noindent Hence, the appropriate question to adress is :
	
	\medskip
	
		\begin{center}
		\textit{Assume $n\ge 3$ (resp. $n=2$ and $\lambda\ne0$). If $\Lambda_g(\lambda)=\Lambda_{\tilde{g}}(\lambda)$, is there a smooth diffeomorphism $\psi:M\to M$ with $\psi|_{\partial M}=id$ and $\psi^*g=\tilde{g} \:?$}
		\end{center}

	\medskip
	
	\noindent This problem is still largely open. Some special cases have been answered positively (see \cite{salo2013calderon} and \cite{uhlmann2009electrical} for
	a presentation of the latest developments) but the general case seems to be very difficult to tackle. However, the question becomes simpler if we assume that $(M,g)$ and $(M,\tilde{g})$
	belong to the same conformal class. Precisely :
	
	\medskip
	
	\begin{center}
		\textit{For $c\in C^\infty(M)$, assume $\Lambda_g(\lambda)=\Lambda_{cg}(\lambda)$. Then, is there a smooth diffeomorphism $\psi:M\to M$ such that  $\psi|_{\partial M}=id$
		and $\psi^*(g)=cg \:?$}
	\end{center}

\medskip

\noindent Actually, we can precise the above question thanks to the following result due to  Lionheart (\cite{lionheart1997conformal}) : any diffeomorphism $\psi :M\to M$  which satisfies $\psi^*(cg)=g$ and
 $\psi|_{\partial M}=id$ must be the identity. Then, the anisotropic Calder\'on inverse problem within the same conformal class can be replaced by

 \begin{center}
		\textit{If $\Lambda_g(\lambda)=\Lambda_{cg}(\lambda)$, is it true that $c=1 \:?$}
	\end{center}

\medskip
\noindent
Some deep results have been obtained in \cite{ferreira2009limiting} for conformally transversally anisotropic manifolds $(M,g)$ of dimension $n \geq3$, i.e for manifolds
\begin{equation*}
	M\subset\subset \mathbb{R}\times K,\quad g=c(x,y_K)(dx^2+g_K)
\end{equation*}

\medskip

\noindent where $(K,g_K)$ is a smooth compact manifold of dimension $n-1$. Under some geometrical conditions on $K$, such as \textit{simplicity}
(a compact manifold $K$ is said \textit{simple} if any two points of $K$ are connected by a unique geodesic and if its boundary is strictly convex) the conformal
factor $c$ is entirely determined by the DN map at frequency $\lambda=0$.

\medskip
\noindent
In the same way, for a class of manifolds $M= [0,1]\times K$ which have the topology of a cylinder, various results of uniqueness (or non-uniqueness when the Dirichlet data and the Neumann data are measured on disjoint sets)
have been obtained  in (\cite{daude2015non,daude2019hidden}). The proofs make use of separation of variables and introduces a connection between the DN map and the Weyl-Titchmarsh functions
associated to a separated ordinary differential equation corresponding to the horizontal variable of the cylinder. This connection allows to use some nice results from complex analysis.

\medskip
\noindent
In this paper, we are interested in a relative inverse problem within the conformal class of certain warped product manifold $M=[0,1]\times \mathbb{S}^{n-1}$ (where $\mathbb{S}^{n-1}$ is the $(n-1)$-sphere) but under a weaker assumption :

\begin{center}
{\it{does the spectrum of the DN map characterize the conformal factor ?}}
\end{center}

$\quad$

\noindent We recall that $\Lambda_g(\lambda)$ is an elliptic pseudodifferential operator of order 1 and is self-adjoint on $L^2(\partial M, dS_g)$ where $dS_g$ is the metric induced by $g$ on the boundary $\partial M$. Thus, the DN operator $\Lambda_g(\lambda)$  has a discrete spectrum denoted $\sigma(\Lambda_g(\lambda))$ accumulating at infinity
\begin{equation*}
	\sigma(\Lambda_g(\lambda))=\{\lambda_0\leq \lambda_1\le\lambda_2\le...\le\lambda_k\to+\infty\}.
\end{equation*}

\medskip

\noindent
This spectrum is called the \textit{Steklov spectrum} (see \cite{jollivet2014inverse}, p.2). In the particular case where $\lambda=0$, using the Green's formula, we see that the DN operator is positive and we have $\lambda_0 =0$. The properties of the Steklov spectrum are highy sensitive to the smoothness of the boundary $\partial M$. For example (see \cite{girouard2017spectral}): if $\partial M$ is smooth, the eigenvalues satisfy the Weyl formula for the Steklov spectrum:
\begin{equation}
\label{WeylSteklov}
	\lambda_j=2\pi\bigg(\frac{j}{\mathrm{Vol}(\mathbb{B}^{n-1})\mathrm{Vol}(\partial M)}\bigg)^{\frac{1}{n-1}}+O(1)
\end{equation}
A much more refined asymptotic holds if $M$ is a smooth surface, involving the lengths of the connected components of $M$, with a rate of decay of $O(j^{-\infty})$ (see \cite{girouard2019steklov}), but this formula fails for polygons (\cite{girouard2017spectral}). In some particular cases, when the boundary $\partial M$ is just $C^1$, it is also possible to find a one term asymptotic of the Steklov spectrum counted with multiplicity (see \cite{agranovich2006mixed}). For more specific domains with Lipschitz boundary, a recent paper (\cite{girouard2019steklov}) proves a two-term asymptotic formula for cuboids, i.e domains defined, for $n\ge 3,$ as
\begin{equation*}
	M=(-a_1,a_1)\times ... \times (-a_n,a_n)\in\mathbb{R}^n.
\end{equation*}
\noindent The formula for the counting function $N(\lambda)$ of Steklov eigenvalues is the following ($C_1,C_2\in\mathbb{R}$):
\begin{equation*}
N(\lambda)=C_1\mathrm{Vol}_{n-1}(\partial M)\lambda^{n-1}+C_2\mathrm{Vol}_{n-2}(\partial^2 M)\lambda^{n-2}+O(\lambda^{\eta}),
\end{equation*}
where $\partial^2 M$ denotes the union of all the $n-2$ dimensional facets of $M$, $\displaystyle \eta=\frac{2}{3}$ if $n=3$ and $\displaystyle \eta=n-2-\frac{1}{n-1}$ if $n\ge 4$. As a corollary, the authors deduce also that, for a rectangle, the Steklov spectrum determines its side lengths: in other words they are Steklov spectrum invariants. Moreover, it is well-known that some other geometrical quantities of the boundary of a Riemannian manifold surface are Steklov spectrum invariants. Let us mention for instance (see \cite{girouard2019steklov,girouard2017spectral} for details and reference therein):

\begin{enumerate}[label=\alph*), align=left, leftmargin=*, noitemsep]
\item the dimension of the manifold and the volume of its boundary.

\item When $\dim M\ge 3$, the integral of the mean curvature on $\partial M$.

\item When $\dim M=2$, the number and the lengths of the connected components of $\partial M$.
\end{enumerate}

	\subsection{The main result}

In this section, we give the main results of the paper. Let $M=[0,1]\times \mathbb{S}^{n-1}$ be a manifold equipped with the metric
 \begin{equation}
 \label{met}
 g=f(x)(dx^2+g_{\mathbb{S}}),
 \end{equation}
where $\mathbb{S}^{n-1}$ is the unit sphere in $\mathbb{R}^n$, $g_\mathbb{S}$ is the  metric induced by the euclidean metric on $\mathbb{R}^n$, and where
the conformal factor $f$ is a smooth positive function of the variable $x \in [0,1]$ only. Note that the manifold $M$ has two boundaries $\Gamma_0= \{0\} \times \mathbb{S}^{n-1}$ and $\Gamma_1 = \{1\} \times \mathbb{S}^{n-1}$. Thus, the DN map will be shown to have the structure of a matrix operator defined on $H^{1/2}(\mathbb{S}^{n-1})\oplus H^{1/2}(\mathbb{S}^{n-1})$.

\medskip
\noindent
In this setting, we want to answer the following question : does the Steklov spectrum determine uniquely the warping function $f(x)$? This question has been answered positively in \cite{daude2018stability} when $K=(0,1]\times \mathbb{S}^{n-1}$ equipped with the metric (\ref{met}). 
The difference between this case and the one we are studying here is that the boundary of $K$ is connected whereas that of $M$ is made of two connected components. Due to a natural gauge invariance, we emphasize it is hopeless to recover the metric from the spectrum data. Indeed, let $\psi:M\to M$ be a smooth diffeomorphism. We have (\cite{jollivet2014inverse}):

\begin{equation*}
	\Lambda_{\psi^*g}(\lambda)=\varphi^*\circ\Lambda_g(\lambda)\circ{\varphi^*}^{-1} ,
\end{equation*}

\medskip

\noindent where $\varphi:=\psi|_{\partial M}$ and where  $\varphi^* : C^\infty(\partial M)\to C^\infty(\partial M)$ is the application defined by $\varphi^*h:=h\circ \varphi$. As a by-product, one has:
\begin{equation*}
	\sigma(\Lambda_{\psi^*g}(\lambda))=\sigma(\Lambda_g(\lambda)).
\end{equation*}

\medskip

\noindent Thus, if we can find a diffeomorphism $\psi$ preserving the warped product structure of the manifold $M$ given by (\ref{met}), we are able to find a counterexample to uniqueness from the knowledge of the Steklov spectrum. For instance, consider the map

\begin{equation*}
	\psi : (x,y)\in[0,1]\times \mathbb{S}^{n-1} \mapsto (1-x,y)\in[0,1]\times \mathbb{S}^{n-1}.
\end{equation*}

\medskip

\noindent A straightforward computation gives $\displaystyle \psi^*g=f(1-x)(dx^2+g_{\mathbb{S}})$. Thus, the above discussion shows that $\Lambda_{g}(\lambda)$
and $\Lambda_{\psi^*g}(\lambda)$ have the same Steklov spectrum. Now, we can reformulate more precisely our initial question. Let $g$ and $\tilde{g}$ be two Riemannian metrics given by  (\ref{met}) with conformal factor $f(x)$, (resp. $\tilde{f}(x)$). Assume that $\Lambda_g(\lambda)$ and $\Lambda_{\tilde{g}}(\lambda)$ are Steklov isospectral. Then, is it true that :

\begin{center}
	\textit{$f(x) = \tilde{f}(x)$ or $f(x) = \tilde{f}(1-x)$? }
\end{center}

\medskip

\noindent
In what follows, we answer positively this question in dimension $n=2$ with $\lambda\ne 0$, and in dimension $n\ge3$  for any frequency $\lambda$ with an additional hypothesis on the metrics on the boundary $\partial M$.

\medskip

\noindent We choose the sphere $\mathbb{S}^{n-1}$ as the transversal manifold of our cylinder for a purely technical reason. Indeed, we need a precise (in fact an exact) asymptotic of the eigenvalues of the Laplace Beltrami operator $\Delta_{g_\mathbb{S}}$ on $(\mathbb{S}^{n-1},g_\mathbb{S})$  in order to get our uniqueness on the conformal factor $f$. Moreover, it is important to understand that we fix the transversal metric $g_\mathbb{S}$ in our inverse problem. Otherwise, it is known that we could find two non isometric transversal metrics such that the associated Riemannian manifolds are isospectral. This would lead to Steklov isospectral cylinders. Similarly, when $\mathrm{dim}(M)\ge 3$, it is known (see \cite{girouard2019steklov,parzanchevski2013g}) that we can find Steklov isospectral manifolds $M=[0,L]\times K$ and $\tilde{M}=[0,L]\times \tilde{K}$ (with $K$ and $\tilde{K}$ not connected) such that the areas of the connected components of $M$ and $\tilde{M}$ are not the same.
 
\medskip
 
\noindent Our main result is the following:
	
\begin{theorem}\label{theorem1}
Let $M=[0,1]\times \mathbb{S}^{n-1}$ be a smooth Riemanniann manifold equipped with the metric
		\begin{center}
		$ g=f(x)(dx^2+g_\mathbb{S})$ \ ,\ (resp. $\tilde{g}=\tilde{f}(x)(dx^2+g_\mathbb{S}))$,
		\end{center}
and let $\lambda$ be a frequency not belonging to the Dirichlet spectrum of  $-\Delta_g$ and $-\Delta_{\tilde{g}}$ on $M$. Then,
\begin{enumerate}
	 \item For $n=2$ and $\lambda\ne 0$,
	    \begin{equation*}
		\big(\sigma(\Lambda_g(\lambda)) = \sigma(\Lambda_{\tilde{g}}(\lambda))\big)	\Leftrightarrow \big(f=\tilde{f}\quad {\rm{or}}\quad f=\tilde{f}\circ \eta\big)
		\end{equation*}
		where $\eta(x)=1-x$ for all $x \in [0,1]$.
		
	\item For $n\ge 3$, and if moreover
		\begin{equation*}
			f,\tilde{f}\in\mathcal{C}_b := \bigg\{f\in C^\infty([0,1]),\: \bigg|\frac{f'(k)}{f(k)}\bigg|\le \frac{1}{n-2},\:k=0 \ {\rm{and}} \  1 \bigg\},
	     \end{equation*}			
	\begin{equation*}
	\big(\sigma(\Lambda_g(\lambda)) = \sigma(\Lambda_{\tilde{g}}(\lambda))\big)	\Leftrightarrow \big(f=\tilde{f}\quad {\rm{or}}\quad f=\tilde{f}\circ \eta\big)
	\end{equation*}
\end{enumerate}
\end{theorem}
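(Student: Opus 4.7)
I would attack this via separation of variables in spherical harmonics, followed by a Weyl--Titchmarsh/Nevanlinna analysis of the resulting $2\times2$ block DN map.

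First, decompose everything along the eigenspaces of $-\Delta_{\mathbb{S}}$. Write $u(x,y)=\sum_{k\ge0} v_k(x)\,Y_k(y)$, where $-\Delta_{\mathbb{S}}Y_k=\mu_kY_k$ with $\mu_k=k(k+n-2)$. Using $\sqrt{|g|}=f^{n/2}\sqrt{|g_{\mathbb{S}}|}$, the equation $-\Delta_gu=\lambda u$ reduces to a family of Sturm--Liouville problems on $[0,1]$,
\begin{equation*}
  -\frac{1}{h(x)}\bigl(h(x)v_k'\bigr)'+\bigl(\mu_k-\lambda f(x)\bigr)v_k=0,\qquad h(x):=f(x)^{(n-2)/2},
\end{equation*}
which after the Liouville substitution $w=h^{1/2}v_k$ becomes a Schr\"odinger equation $-w''+\bigl(q(x)+\mu_k\bigr)w=\lambda f(x)\,w$ with a potential $q$ depending only on $f$. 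Since $\partial M=\Gamma_0\sqcup\Gamma_1$, the DN map is block-diagonal in the spherical-harmonic basis and each block $\Lambda_{g,k}(\lambda)$ is a $2\times2$ matrix on $E_k\oplus E_k$ whose entries are explicit in terms of the fundamental solutions of the ODE at the two endpoints --- equivalently, in terms of the characteristic function and the two Weyl--Titchmarsh functions $M_{0,k}(\lambda)$, $M_{1,k}(\lambda)$, with boundary prefactors $\pm f^{-1/2}(0),\pm f^{-1/2}(1)$ coming from the unit normals.

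The Steklov spectrum is thus the disjoint union, with multiplicities $\dim E_k$, of the two eigenvalues of each $\Lambda_{g,k}(\lambda)$. The first key step is to recover, for every $k$, the unordered pair of eigenvalues of $\Lambda_{g,k}(\lambda)$, hence its trace $T_k$ and determinant $D_k$, from the full Steklov spectrum. This demands a refined asymptotic expansion of the block eigenvalues in $k$, finer than the leading Weyl term \eqref{WeylSteklov}, showing a spectral gap between modes $k$ and $k\pm1$. I expect the hypothesis $|f'(0)/f(0)|,|f'(1)/f(1)|\le 1/(n-2)$ to appear precisely here, as the quantitative condition that makes the leading next-order term in the asymptotics of $\Lambda_{g,k}(\lambda)$ preserve this separation; in dimension $n=2$ the exclusion $\lambda\ne0$ is needed because otherwise the block eigenvalues degenerate.

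Once $T_k$ and $D_k$ are available for all $k$, interpret them as the values at $z=\mu_k$ of meromorphic functions of Nevanlinna class on $\mathbb{C}$, built from Wronskians of the ODE parametrized by $z$. Since $\{\mu_k\}=\{k(k+n-2)\}$ is a uniqueness set for this class, equality on this sequence propagates to identity on $\mathbb{C}$, giving $T(z)=\widetilde T(z)$ and $D(z)=\widetilde D(z)$ for all $z$. This is equivalent to equality of the two symmetric combinations $M_0+M_1$ and $M_0M_1$ of the endpoint Weyl--Titchmarsh functions; a local Borg--Marchenko theorem near each boundary then forces the Weyl--Titchmarsh functions themselves to coincide \emph{up to the permutation} $\{M_0,M_1\}\leftrightarrow\{M_1,M_0\}$. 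Globalizing the Borg--Marchenko argument across $[0,1]$ yields $f=\tilde f$ in the first case and $f=\tilde f\circ\eta$ in the second, which is exactly the natural endpoint-swap gauge.

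\textbf{Main obstacle.} The delicate point is the mode-separation step: one cannot read off individual $2\times2$ blocks from the union of their spectra without a quantitative asymptotic gap, and producing such a gap requires going well beyond \eqref{WeylSteklov}. Deriving these refined asymptotics, and checking that the threshold $1/(n-2)$ is precisely what keeps consecutive blocks non-overlapping, seems to be the technical heart of the argument; the rest is a Nevanlinna/Borg--Marchenko package of a kind already used in the cylinder setting \cite{daude2015non,daude2019hidden}.
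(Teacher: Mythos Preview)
Your plan is correct and matches the paper's approach closely: separation of variables, $2\times2$ block structure via Weyl--Titchmarsh functions, asymptotic mode-separation (where the bound $|f'/f|\le 1/(n-2)$ enters exactly as you anticipate, forcing matched indices to coincide), Nevanlinna-class extrapolation from $\{\mu_k\}$, and Borg--Marchenko plus ODE uniqueness to finish. The only refinement worth noting is that the paper does not recover the block data for \emph{every} $k$ directly from the spectrum, but first on a M\"untz-type subset $\mathcal{L}\subset\mathbb{N}$ with $\sum_{m\in\mathcal{L}}1/m=\infty$ (after splitting into the cases $f(0)=f(1)$ and $f(0)\neq f(1)$), and then uses the Nevanlinna argument to upgrade this to all $z$; also, the condition $\lambda\neq0$ for $n=2$ is used not in the mode-separation but at the very last step, since then $q=-\lambda f$ and $q=\tilde q$ yields $f=\tilde f$ only when $\lambda\neq0$.
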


\medskip
\noindent
Let us explain briefly the outline of the proof. In both cases $n=2$ or $n \geq 3$, the proof consists in four steps. We emphasize that in the first three steps, we do not use explicitly that the transversal manifold is the unit sphere $\mathbb{S}^{n-1}$.

$\quad$

\noindent{\it Step 1 :} we follow the same approach as in \cite{daude2015non}. Since the manifold $M$ has the topology of a cylinder and is equipped with a warped product metric, we can use separation of variables and write the solution $u$ of the Dirichlet problem (\ref{Schr}) as
\begin{equation*}
u(x,y)=\sum_{m=0}^{+\infty}u_m(x)Y_m(y),
\end{equation*}
reducing this problem to a countable family of Sturm-Liouville equations with boundary conditions :

\begin{equation}
\label{SL_intr}
\displaystyle\: \left\{
\begin{aligned}
& -v^{''}_m+qv_m=-\mu_mv_m,\quad {\rm{on}}\:\:]0,1[\\
&v_m(0)=f^{\frac{n-2}{4}}(0)\psi_m^0,\:\:v_m(1)=f^{\frac{n-2}{4}}(1)\psi_m^1, \end{aligned}\right.\quad\quad \forall m\in\mathbb{N}
\end{equation}
\noindent where \begin{center}
	$\displaystyle q=\frac{(f^{\frac{n-2}{4}})''}{f^{\frac{n-2}{4}}}-\lambda f\quad$ and $\quad v_m=f^{\frac{n-2}{4}}u_m,\: \forall m\in\mathbb{N}$.
\end{center}

\noindent The sequence $(Y_m)$ refers to an orthonormal basis of eigenvectors of the Laplace-Beltrami operator $-\Delta_{g_\mathbb{S}}$ on the unit sphere.

$\quad$

\noindent{\it Step 2 :} we use the above decomposition to write the DN operator as an infinite matrix which is block diagonal. More precisely, let us introduce the basis  $\mathfrak{B}$=$(\{e^1_m,e^2_m\})_{m \in \mathbb{N}}$, where $e^1_m=(Y_m,0)$ and $e^2_m=(0,Y_m)$. For each $m\in\mathbb{N}$, we denote $\Lambda_g^m(\lambda)$ the restricted operator of $\Lambda_g(\lambda)$ on the subspace spanned by $\{e^1_m,e^2_m\}$. We can write $\Lambda_g(\lambda)$ in the basis $\mathfrak{B}$ as the infinite matrix:

\medskip

\begin{equation*}
	[\Lambda_g]_{\mathfrak{B}}=\begin{pmatrix}
	&\Lambda_g^1(\lambda)&&0&&0&\cdots&\\
	&0&&\Lambda_g^2(\lambda)&&0&\cdots&\\
	&0&&0&&\Lambda_g^3(\lambda)&\cdots&\\
	&\vdots&&\vdots&&\vdots&\ddots
	\end{pmatrix}
\end{equation*}

\medskip

\noindent
Each $(2,2)$ matrix $\Lambda_g^m(\lambda)$ has a simple interpretation involving the so-called Weyl-Titchmarsh theory associated to the Sturm-Liouville equation (\ref{SL_intr}). More precisely, if we denote
\begin{equation*}
\sigma(-\Delta_{g_\mathbb{S}})=\{0=\mu_0<\mu_1\le\mu_2\le...\le \mu_m\le...\to+\infty\},
\end{equation*}
the spectrum of $-\Delta_{g_\mathbb{S}}$ and $h:=f^{n-2}$, we shall see that :
\begin{equation*}
\Lambda_g^m(\lambda)=\begin{pmatrix}
-\frac{M(\mu_m)}{\sqrt{f(0)}}+\frac{1}{4\sqrt{f(0)}}\frac{h'(0)}{h(0)}&-\frac{1}{\sqrt{f(0)}}\frac{h^{1/4}(1)}{h^{1/4}(0)}\frac{1}{\Delta(\mu_m)}\\
-\frac{1}{\sqrt{f(1)}}\frac{h^{1/4}(0)}{h^{1/4}(1)}\frac{1}{\Delta(\mu_m)}&-\frac{N(\mu_m)}{\sqrt{f(1)}}-\frac{1}{4\sqrt{f(1)}}\frac{h'(1)}{h(1)}
\end{pmatrix}.
\end{equation*}

\noindent where $M(z)$, $N(z)$ are the Weyl-Titchmarsh functions and $\Delta(z)$ is the characteristic function associated to (\ref{SL_intr}). In particular, the trace (respectively the determinant) of these operators $\Lambda_g^m(\lambda)$ are  meromorphic functions evaluated in $\mu_m$. Moreover, one can prove that the Steklov Spectrum is made of two subsequences $\big(\lambda^-(\mu_m)\big)_{m\in\mathbb{N}}$ and $\big(\lambda^+(\mu_m)\big)_{m\in\mathbb{N}}$ satisfying the following asymptotic expansion 
\begin{equation*}
	\left\{ \begin{aligned}
	& \lambda^-(\mu_m)=-\frac{N(\mu_m)}{\sqrt{f(1)}}-\frac{1}{4\sqrt{f(1)}}\frac{h'(1)}{h(1)}+O\big(\sqrt{\mu_m}e^{-\sqrt{\mu_m}} \big) \\
	&\lambda^+(\mu_m)=-\frac{M(\mu_m)}{\sqrt{f(0)}}+\frac{1}{4\sqrt{f(0)}}\frac{h'(0)}{h(0)}+O\big(\sqrt{\mu_m}e^{-\sqrt{\mu_m}} \big)
	\end{aligned}\right.
\end{equation*}

$\quad$

\noindent {\it Step 3 :} we prove that the knowledge of the trace and the determinant of all restricted operators $\Lambda_g^m(\lambda)$, for $m$ large enough, characterizes $f$ up to an involution: there is $m_0\in\mathbb{N}$ such that :

\begin{center}
	$\big($Tr\ $\Lambda_g^m(\lambda)=$ Tr\ $\Lambda_{\tilde{g}}^m(\lambda)$ and $\det \Lambda_g^m(\lambda)=\det\Lambda_{\tilde{g}}^m(\lambda)$ for all $m \geq m_0$ $\big)$ 
	
	$\Leftrightarrow$ ($f=\tilde{f}$ or $f=\tilde{f}\circ\eta$.)
\end{center}

\noindent
Note that the third step requires to know the asymptotic expansion of the Steklov spectrum.

$\quad$

\noindent{\it Step 4} : in this step, we use explicitely that the transversal manifold is the unit sphere $\mathbb{S}^{n-1}$ equipped with the metric induced by the euclidean metric on $\mathbb{R}^{n}$. From the equality between the sets $\sigma\big(\Lambda_g(\lambda)\big)=\big(\lambda^\pm(\mu_m)\big)_{m\in\mathbb{N}}$ and $\sigma\big(\Lambda_{\tilde{g}}(\lambda)\big)=\big(\tilde{\lambda}^\pm(\mu_m)\big)_{m\in\mathbb{N}}$, we want to deduce the equalities
\begin{equation*}
	\left\{\begin{aligned}
	&\lambda^-(\mu_m)= \tilde{\lambda}^-(\mu_m)\\
	&\lambda^-(\mu_m)= \tilde{\lambda}^+(\mu_m)
	\end{aligned}\right.\quad\mathrm{or}\quad 	\left\{\begin{aligned}
	&\lambda^-(\mu_m)= \tilde{\lambda}^+(\mu_m)\\
	&\lambda^+(\mu_m)= \tilde{\lambda}^-(\mu_m),
	\end{aligned}\right.
\end{equation*}
\noindent for integers $m$ belonging to a set $\mathcal{L}$ satisfying the Müntz conditions $\displaystyle \sum_{m\in\mathcal{L}}\frac{1}{m}=+\infty$. Clearly, for every $\lambda^\pm(\mu_m)$ in $\sigma\big(\Lambda_g(\lambda)\big)$, there is $\tilde{\lambda}^-(\mu_\ell)$ or $\tilde{\lambda}^+(\mu_\ell)$ in $\sigma\big(\Lambda_{\tilde{g}}(\lambda)\big)$ such that 
\begin{center}
	$\lambda^\pm(\mu_m)=\tilde{\lambda}^-(\mu_\ell)\quad$ or $\quad\lambda^\pm(\mu_m)=\tilde{\lambda}^+(\mu_\ell)$.
\end{center}
\noindent The assumption $\displaystyle \bigg|\frac{f'(k)}{f(k)}\bigg|\le \frac{1}{n-2}$ for $k\in\{0,1\}$ is used here to ensure that $m=\ell$ if is $m$ is large enough. This step leads to distinguish the cases $f(0)=f(1)$ and $f(0)\ne f(1)$. Then we prove that $\mathrm{Tr}\ \Lambda_g^m(\lambda)= \mathrm{Tr}\ \Lambda_{\tilde{g}}^m (\lambda)$ and $\det \Lambda_g^m(\lambda)=\det\ \Lambda_{\tilde{g}}^m(\lambda)$ for all $m$ large enough. The result follows from the Step 3.

\section{Reduction to ordinary differential equations}

\noindent In this section, $(K,g_K)$ is an arbitrary closed manifold of dimension $n-1$ and $M=[0,1]\times K$ is equipped with the metric $g=f(x)(dx^2+g_K)$.

\subsection{The separation of variables}

\medskip

The boundary $\partial M$ of the manifold $M$ has two distinct connected components
\begin{center}
	$\Gamma_0=\{0\}\times K$ and $\Gamma_1=\{1\}\times K$,
\end{center}
so we can decompose $H^{1}(\partial M)$ as the direct sum :
\begin{center}
	$H^{1/2}(\partial M)=H^{1/2}(\Gamma_0) \bigoplus H^{1/2}(\Gamma_1)$.
\end{center}

\medskip

\noindent
Each  element $\psi$ of $H^{1/2}(\partial M)$ can be written as
\begin{center}
$\displaystyle \psi=\begin{pmatrix}
\psi^0\\
\psi^1
\end{pmatrix},\quad\quad$ $\psi^0\in H^{1/2}(\Gamma_0)$ and $\psi^1\in H^{1/2}(\Gamma_1)$.
\end{center}

$\quad$

\noindent The Laplacian $-\Delta_{g_K}$ is a self-adjoint operator on $L^2(K)$ and has pure point spectrum $(\mu_m)$, with $\mu_0=0<\mu_1\le\mu_2\le...\: \mu_m\to+\infty$. We denote $(Y_m)_{m\in\mathbb{N}}$  the associated orthonormal Hilbert basis of the eigenvectors.

\medskip
\noindent
Now, we decompose $\psi^0$ and $\psi^1$ as:

\medskip

\begin{equation*}
\psi^0=\sum_{m\in\mathbb{N}}\psi_m^0 Y_m,\quad\quad \psi^1=\sum_{m\in\mathbb{N}}\psi_m^1 Y_m.
\end{equation*}

\medskip

\noindent
Then, we are looking for the unique solution $u(x,y)$ of the Dirichlet problem (\ref{Schr}) in the form:
\begin{equation*}
u(x,y)=\sum_{m=0}^{+\infty}u_m(x)Y_m(y).
\end{equation*}

\begin{proposition}
	\label{equiveqdiff}
The equation (\ref{Schr}) is equivalent to the following countable system of Sturm-Liouville equations :

\begin{equation}
\label{S-T}
	\displaystyle\: \left\{
	\begin{aligned}
	& -v^{''}_m+qv_m=-\mu_mv_m,\quad {\rm{on}}\:\:]0,1[\\
	&v_m(0)=f^{\frac{n-2}{4}}(0)\psi_m^0,\:\:v_m(1)=f^{\frac{n-2}{4}}(1)\psi_m^1, \end{aligned}\right.\quad\quad \forall m\in\mathbb{N},
\end{equation}
\noindent where \begin{center}
	$\displaystyle q=\frac{(f^{\frac{n-2}{4}})''}{f^{\frac{n-2}{4}}}-\lambda f\quad$ and $\quad v_m=f^{\frac{n-2}{4}}u_m,\: \forall m\in\mathbb{N}$.
\end{center}
\end{proposition}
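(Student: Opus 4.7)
The plan is to split the proof into three parts: compute the Laplace--Beltrami operator in the warped coordinates, perform separation of variables on the eigenfunctions $Y_m$ of $-\Delta_{g_K}$, and then apply a Liouville-type change of unknown to put the resulting family of ODEs into Schrödinger form.

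First I would compute $-\Delta_g$ for $g = f(x)(dx^2 + g_K)$. Since $\sqrt{|g|} = f^{n/2}\sqrt{|g_K|}$ and the inverse metric is $f^{-1}$ times the block-diagonal inverse of $dx^2 + g_K$, the general formula for $-\Delta_g$ recalled in the introduction yields, after grouping the $x$-derivatives and the $K$-derivatives,
\begin{equation*}
-\Delta_g u \;=\; -\frac{1}{f^{n/2}}\,\partial_x\!\bigl(f^{(n-2)/2}\,\partial_x u\bigr) \;-\; \frac{1}{f}\,\Delta_{g_K} u.
\end{equation*}
This is the standard warped-product expression; the main subtlety is keeping track of the power of $f$ coming from $\sqrt{|g|}/\sqrt{|g_K|} = f^{n/2}$ and from $g^{xx} = g^{ii} = 1/f$.

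Next I would plug the ansatz $u(x,y) = \sum_{m\ge 0} u_m(x) Y_m(y)$ into $-\Delta_g u = \lambda u$. Using $-\Delta_{g_K} Y_m = \mu_m Y_m$ and projecting on the orthonormal Hilbert basis $(Y_m)$ of $L^2(K)$, the PDE decouples into the countable family
\begin{equation*}
-\bigl(f^{(n-2)/2}\, u_m'\bigr)' + \mu_m\, f^{(n-2)/2}\, u_m \;=\; \lambda\, f^{n/2}\, u_m, \qquad m\in\mathbb{N},
\end{equation*}
together with the boundary conditions $u_m(k) = \psi_m^k$ for $k\in\{0,1\}$ obtained by projecting $\psi^0,\psi^1 \in H^{1/2}(K)$ on the basis $(Y_m)$. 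The convergence of the series and the equivalence with the original Dirichlet problem in $H^1(M)$ is routine once $\lambda$ is outside $\sigma(-\Delta_g)$: each Sturm--Liouville problem is then uniquely solvable, and a standard energy estimate controls the tail of the series in $H^1$.

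Finally, to remove the first-order term and put the equation in Schrödinger form, I would apply the Liouville substitution $v_m := f^{(n-2)/4} u_m$, equivalently $u_m = \phi^{-1} v_m$ with $\phi = f^{(n-2)/4}$. A short computation gives $(f^{(n-2)/2} u_m')' = \phi\, v_m'' - \phi''\, v_m$; dividing the projected equation by $\phi\cdot f^{(n-2)/2}/\phi^2 = f^{(n-2)/2}/\phi$ and rearranging converts the $\mu_m f^{(n-2)/2}$ and $\lambda f^{n/2}$ terms into $\mu_m$ and $\lambda f$ respectively, producing
\begin{equation*}
-v_m'' + \left( \frac{(f^{(n-2)/4})''}{f^{(n-2)/4}} - \lambda f \right) v_m \;=\; -\mu_m\, v_m.
\end{equation*}
The boundary data transform by the same multiplicative factor, giving $v_m(k) = f^{(n-2)/4}(k)\,\psi_m^k$. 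The only genuine computational step is the Liouville transformation itself; the rest is bookkeeping with the powers of $f$, which I expect to be the main place where a sign or exponent could go astray.
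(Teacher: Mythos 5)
Your proof is correct, and the end formulas agree with the paper's, but your computational route is organized differently. The paper begins by invoking a known intertwining identity for a conformal change of metric,
\begin{equation*}
-\Delta_g \;=\; f^{-\frac{n+2}{4}}\bigl(-\Delta_{g_0}+q_f\bigr)\,f^{\frac{n-2}{4}},
\qquad g_0=dx^2+g_K,\quad q_f=\frac{(f^{(n-2)/4})''}{f^{(n-2)/4}},
\end{equation*}
which already has the Liouville substitution $v=f^{(n-2)/4}u$ built in at the PDE level; it then expands $v=\sum v_m Y_m$ and reads off the Schr\"odinger form directly. You instead compute the warped-product Laplacian explicitly from the coordinate formula, separate variables first to obtain the divergence-form ODEs $-(f^{(n-2)/2}u_m')'+\mu_m f^{(n-2)/2}u_m=\lambda f^{n/2}u_m$, and only then perform the Liouville change $v_m=f^{(n-2)/4}u_m$ on each ODE to eliminate the first-order term. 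The two routes are the same calculation done in opposite order: the paper's version is shorter and more conceptual because it quotes the intertwining identity (itself proved by exactly the kind of computation you carry out, using that $f$ depends on $x$ only so that $\phi=f^{(n-2)/4}$ commutes with $\Delta_{g_K}$); your version is more elementary and self-contained, making the bookkeeping of powers of $f$ visible rather than black-boxed. A very minor remark: the phrase ``dividing the projected equation by $\phi\cdot f^{(n-2)/2}/\phi^{2}$'' is a convoluted way to say ``divide by $\phi$'' (they are equal since $f^{(n-2)/2}=\phi^{2}$), and your aside about $H^1$-convergence addresses a functional-analytic point the paper leaves implicit, which is fine.
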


\begin{proof}
The transformation law for the Laplacian operator under conformal change of metric gives
\begin{equation*}
\begin{aligned}
-\Delta_g=f^{-\frac{n+2}{4}}(-\Delta_{g_0}+q_f)f^{\frac{n-2}{4}} ,
\end{aligned}
\end{equation*}
where $g_0=dx^2+g_{K}$ and $\displaystyle q_f=:\frac{(f^\frac{{n-2}}{4})''}{f^{\frac{n-2}{4}}}$. Thus, we have :
\begin{equation*}
\begin{aligned}
	-\Delta_g u(x,y)=\lambda u(x,y)&\Leftrightarrow f^{-\frac{n+2}{4}}\big(-\frac{\partial^2}{\partial x^2}-\Delta_{g_K}+q_f\big)f^{\frac{n-2}{4}}u(x,y)=\lambda u(x,y)\\
	&\Leftrightarrow \big(-\frac{\partial^2}{\partial x^2}-\Delta_{g_K}+q_f\big)v(x,y)=\lambda f(x)v(x,y)\\
\end{aligned}	
\end{equation*}

\noindent thanks to the change of variable $\displaystyle v=f^{\frac{n-2}{4}}u$. Writing $v$ as

\begin{equation*}
	v(x,y)=\sum_{m=0}^{+\infty}v_m(x)Y_m(y),
\end{equation*}
\noindent we get :
\begin{equation*}
\begin{aligned}
-\Delta_g u(x,y)=\lambda u(x,y)&\Leftrightarrow \sum_{m=0}^{+\infty}\big(-\frac{\partial^2}{\partial x^2}-\Delta_{g_K}+q_f\big)v_m(x)Y_m=\sum_{m=0}^{+\infty}\lambda f(x)v_m(x)Y_m\\
&\Leftrightarrow \sum_{m=0}^{+\infty}\big(-v''_m(x)+\mu_mv_m(x)+q_f(x)v_m(x)\big)Y_m\\
&\qquad\qquad\qquad\qquad\qquad\qquad\qquad\qquad=\sum_{m=0}^{+\infty}\lambda f(x)v_m(x)Y_m\\
&\Leftrightarrow \forall m\in\mathbb{N},\:
 -v^{''}_m(x)+q(x)v_m(x)=-\mu_mv_m(x),\\
\end{aligned}
\end{equation*}
where $\displaystyle q=:q_f-\lambda f$. Finally, $v_m$ satisfies the above boundary conditions.
\end{proof}

\medskip

\noindent
It turns out that this family of Sturm-Liouville equations fits into the so-called Weyl-Titchmarsh theory which we recall in the next section for the convenience of the reader.

\subsection{The Weyl-Titchmarsh functions}

\medskip

\noindent
Consider the differential equation:
\begin{equation}
\label{eqdif}
	-u''+qu=-zu,\quad z\in\mathbb{C}.
\end{equation}
Let $\{c_0,s_0\}$ and $\{c_1,s_1\}$ be the two fundamental systems of solutions of (\ref{eqdif}) with boundary conditions
\begin{equation}
\label{solfond}
	\left\{
	\begin{aligned}
	& c_0(0)=1,\:c_0'(0)=0\\
	& c_1(1)=1,\:c_1'(1)=0 \end{aligned}\right.\quad{\rm{and}}\quad \left\{
	\begin{aligned}
	& s_0(0)=0,\:s_0'(0)=1\\
	& s_1(1)=0,\:s_1'(1)=1. \end{aligned}\right.
\end{equation}

$\quad$

\noindent
Since the wronskian $W(f,g) = fg'-f'g$ of two solutions $f$ and $g$ of (\ref{eqdif}) depends only  on the parameter $z$, we can define the following holomorphic functions:

\medskip

\begin{definition}
The \textit{characteristic function} $\Delta(z)$ of the equation (\ref{eqdif}) is defined for every $z\in\mathbb{C}$ by :
\begin{equation*}
\Delta(z):=W(s_0,s_1)=s_0(1)=-s_1(0).
\end{equation*}
We also set  $D(z):=W(c_0,s_1)=c_0(1)$ and $E(z):=W(c_1,s_0)=c_1(0)$.
\end{definition}

\noindent
Now, we recall the three following results, (see for instance \cite{daude2015non} and \cite{poschel1987inverse} for details).

\medskip

\begin{proposition}
	\label{estim}
Set $\Pi^+=\{z\in\mathbb{C},\:\Re(z)>0\}$ the half-right plane of the complex plane. The functions $\Delta$, $D$ and $E$ are analytic on $\mathbb{C}$ and have on $\Pi^+$ the following asymptotics as $|z|\to +\infty$:

\begin{center}
$\displaystyle {\Delta(z)=\frac{\sinh(\sqrt{z})}{\sqrt{z}}\:+\:O\bigg(\frac{e^{|\Re(\sqrt{z})|}}{z}\bigg)},\quad {D(z)=\cosh(\sqrt{z})\:+\:O\bigg(\frac{e^{|\Re(\sqrt{z})|}}{\sqrt{z}}\bigg)}$,\\ \quad $\displaystyle{E(z)=\cosh(\sqrt{z})+O\bigg(\frac{e^{|\Re(\sqrt{z})|}}{\sqrt{z}}\bigg)}$.
\end{center}
where $\sqrt{z}$ is the principal square root of $z$. In particular, $\Delta(z)$, $D(z)$ and $E(z)$ are entire functions of order $\displaystyle \frac{1}{2}$.
\end{proposition}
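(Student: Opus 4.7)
The natural approach is to rewrite \eqref{eqdif} as a Volterra integral equation using variation of parameters against the constant-coefficient problem $-u''=-zu$, and then to produce the claimed asymptotics by iterating. More precisely, for each $x\in[0,1]$ the solution $c_0$ satisfies
\begin{equation*}
c_0(x)=\cosh(\sqrt{z}\,x)+\int_0^x\frac{\sinh\bigl(\sqrt{z}(x-t)\bigr)}{\sqrt{z}}\,q(t)\,c_0(t)\,dt,
\end{equation*}
and $s_0$ satisfies the same equation but with $\cosh(\sqrt{z}\,x)$ replaced by $\sinh(\sqrt{z}\,x)/\sqrt{z}$. The functions $c_1$ and $s_1$ admit entirely analogous equations centered at $x=1$. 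Analyticity in $z\in\IC$ of $c_0,s_0,c_1,s_1$ (and therefore of $\Delta$, $D$, $E$) follows from the uniform convergence of the Picard iterates on compact subsets of $\IC$, since $\sinh(\sqrt{z}\,\cdot)/\sqrt{z}$ and $\cosh(\sqrt{z}\,\cdot)$ are entire in $z$.

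The key estimate is the elementary bound $|\sinh(\sqrt{z}(x-t))/\sqrt{z}|\le e^{|\Re\sqrt{z}|(x-t)}/|\sqrt{z}|$, valid for all $z\in\IC\setminus\{0\}$. First I would define the Picard iterates $c_0^{(0)}(x)=\cosh(\sqrt{z}\,x)$ and
\begin{equation*}
c_0^{(n)}(x)=\int_0^x\frac{\sinh\bigl(\sqrt{z}(x-t)\bigr)}{\sqrt{z}}\,q(t)\,c_0^{(n-1)}(t)\,dt,
\end{equation*}
and prove by induction that
\begin{equation*}
\bigl|c_0^{(n)}(x)\bigr|\le \frac{e^{|\Re\sqrt{z}|\,x}}{|\sqrt{z}|^n}\cdot\frac{\bigl(\int_0^x|q(t)|\,dt\bigr)^n}{n!}.
\end{equation*}
Summing over $n\ge 1$ controls the series by $\bigl(e^{|\Re\sqrt{z}|\,x}/|\sqrt{z}|\bigr)\,\|q\|_1\,\exp\!\bigl(\|q\|_1/|\sqrt{z}|\bigr)$, so for $|z|\to+\infty$ in $\Pi^+$ the tail of the series is $O\bigl(e^{|\Re\sqrt{z}|}/\sqrt{z}\bigr)$. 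Evaluated at $x=1$ this yields the claimed expansion of $D(z)=c_0(1)$. The same machinery applied to the $s_0$-equation gives an extra factor $1/\sqrt{z}$ in the leading term and yields $\Delta(z)=\sinh(\sqrt{z})/\sqrt{z}+O\bigl(e^{|\Re\sqrt{z}|}/z\bigr)$. Symmetry in $x\mapsto 1-x$ (i.e.\ starting the integral equation at $x=1$) handles $E(z)=c_1(0)$.

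For the order statement, the decisive observation is that the bound $|\Re\sqrt{z}|\le |\sqrt{z}|=\sqrt{|z|}$ is valid on all of $\IC$, not just in $\Pi^+$; more carefully, the iteration argument itself only used this inequality, so it delivers on the whole plane an estimate of the form $|\Delta(z)|\le C\,e^{\sqrt{|z|}}$ and similarly for $D$ and $E$. Combined with their analyticity, this shows they are entire of order at most $1/2$. Equality of the order is inherited from the dominant term $\sinh(\sqrt{z})/\sqrt{z}$, respectively $\cosh(\sqrt{z})$, both of which are entire of order exactly $1/2$ (their zeros $z=-n^2\pi^2$, resp.\ $z=-(n+\tfrac12)^2\pi^2$, yield a convergence exponent $1/2$).

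The only delicate point is purely bookkeeping: making the inductive bound on $c_0^{(n)}$ tight enough that the geometric-type series converges and supplies a remainder of the right order in $1/\sqrt{z}$. Beyond that, everything reduces to the classical Volterra/Picard scheme, and no further ideas are needed.
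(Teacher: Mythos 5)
Your proof is correct in its essentials and follows the standard Volterra/Picard-iteration approach, which is exactly what the cited references (P\"oschel--Trubowitz, Daud\'e--Kamran--Nicoleau) use; the paper itself simply recalls these estimates without proof. The integral equations, the inductive bound $|c_0^{(n)}(x)|\le \dfrac{e^{|\Re\sqrt{z}|x}}{|\sqrt{z}|^n}\dfrac{\bigl(\int_0^x|q|\bigr)^n}{n!}$, and the resulting tail estimate $O\bigl(e^{|\Re\sqrt{z}|}/|\sqrt{z}|\bigr)$ all check out, and the $1/\sqrt{z}$ in the $s_0$-seed correctly produces the extra power for $\Delta$. Only one phrase is slightly off: to show the order is exactly $1/2$ you appeal to the convergence exponent of the zeros of $\sinh(\sqrt{z})/\sqrt{z}$, but what is needed is a lower bound on the growth or on the zero-counting of $\Delta$, $D$, $E$ themselves, not of their approximants. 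This is easy to repair, either by observing that along the positive real axis the asymptotic gives $|\Delta(r)|\ge \tfrac{1}{2}\sinh(\sqrt{r})/\sqrt{r}$ for $r$ large, whence the order is at least $1/2$, or by invoking Proposition~\ref{racDelt} together with the Weyl asymptotics $\alpha_j\sim-\pi^2 j^2$ of the Dirichlet eigenvalues, which give the zeros of $\Delta$ convergence exponent exactly $1/2$.
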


\medskip

\begin{proposition}
	\label{racDelt}
	The roots $(\alpha_j)$ of the function $z\mapsto \Delta(z)$ are real and simple. they are the opposite of the eigenvalues of the operator $\displaystyle -\frac{d^2}{dx^2}+q=:H$ on $L^2\big((0,1),dx\big)$ with Dirichlet boundary conditions.
\end{proposition}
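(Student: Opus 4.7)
The plan is to identify the zeros of $\Delta$ with minus the Dirichlet eigenvalues of $H:=-d^2/dx^2+q$ through the relation $\Delta(z)=s_0(1,z)$, deduce reality from self-adjointness, and establish simplicity via a standard parameter-differentiation identity.

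First I would establish the bijection between zeros of $\Delta$ and Dirichlet eigenvalues of $H$. From the initial conditions (\ref{solfond}), $s_0(\cdot,z)$ vanishes at $x=0$ for every $z\in\mathbb{C}$ and is nontrivial since $s_0'(0,z)=1$. If $\Delta(\alpha)=s_0(1,\alpha)=0$, then $s_0(\cdot,\alpha)$ is a nontrivial solution of $-u''+qu=-\alpha u$ with $u(0)=u(1)=0$, so $-\alpha$ is a Dirichlet eigenvalue of $H$. Conversely, any Dirichlet eigenfunction $\phi$ with eigenvalue $\lambda$ satisfies the ODE (\ref{eqdif}) with $z=-\lambda$ together with $\phi(0)=0$, hence is proportional to $s_0(\cdot,-\lambda)$ by Cauchy uniqueness; the boundary condition $\phi(1)=0$ then yields $\Delta(-\lambda)=0$.

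Reality of the roots then follows immediately from self-adjointness of $H$. Since $f$ is real, smooth and positive and $\lambda\in\mathbb{R}$, the potential $q=q_f-\lambda f$ is real and smooth on $[0,1]$, so the regular Sturm-Liouville operator $H$ with Dirichlet boundary conditions is self-adjoint on $L^2((0,1),dx)$ and its spectrum is real. Each $\alpha_j=-\lambda_j$ is therefore real.

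The main technical step is showing that each root is simple. I would use the classical trick of differentiating with respect to the spectral parameter. Setting $\dot s_0:=\partial_z s_0$, differentiating $-s_0''+qs_0+zs_0=0$ in $z$ and combining with the original ODE gives $\frac{d}{dx}(s_0\dot s_0'-s_0'\dot s_0)=s_0^2$. Integrating between $0$ and $1$ and using $s_0(0,z)=0$, $s_0'(0,z)=1$ (which force $\dot s_0(0,z)=\dot s_0'(0,z)=0$) yields
\begin{equation*}
\Delta(z)\,\dot s_0'(1,z)-s_0'(1,z)\,\Delta'(z)=\int_0^1 s_0(x,z)^2\,dx.
\end{equation*}
At a root $\alpha$ of $\Delta$ the first term on the left vanishes, and $s_0'(1,\alpha)\ne 0$ by Cauchy uniqueness at $x=1$ (otherwise $s_0(\cdot,\alpha)\equiv 0$, contradicting $s_0'(0,\alpha)=1$). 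Since $\alpha$ is real, $s_0(\cdot,\alpha)$ is real and nontrivial, so the right-hand side is strictly positive, forcing $\Delta'(\alpha)\ne 0$.

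I do not anticipate any serious obstruction: the identification and reality steps are essentially bookkeeping, and the simplicity step is a one-line Wronskian computation of the kind that recurs throughout Weyl-Titchmarsh theory. The only point needing a little care is to track the initial conditions on $\dot s_0$ at $x=0$ before integrating the Wronskian identity, so that the boundary contribution there vanishes.
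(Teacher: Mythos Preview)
Your proof is correct and complete. The paper does not actually supply its own proof of this proposition: it is stated as one of three results ``recalled'' from \cite{daude2015non} and \cite{poschel1987inverse}, with details deferred to those references. Your argument---identifying zeros of $\Delta(z)=s_0(1,z)$ with minus the Dirichlet eigenvalues, invoking self-adjointness for reality, and proving simplicity via the parameter-differentiation Wronskian identity $\frac{d}{dx}(s_0\dot s_0'-s_0'\dot s_0)=s_0^2$---is precisely the standard Sturm--Liouville argument found in P\"oschel--Trubowitz, so you have reproduced exactly what the cited references contain.
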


\medskip

\begin{proposition}
	\label{Prodinf}
For all $z\in\mathbb{C}$, $\Delta(z)$ can be written as an infinite product. There is a constant $C\in\mathbb{R}$ such that :

\begin{equation*}
\Delta(z)=C\prod_{k=0}^{\infty}\bigg(1-\frac{z}{\alpha_k}\bigg).
\end{equation*}
\end{proposition}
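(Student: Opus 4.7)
The plan is to apply Hadamard's factorization theorem to the entire function $\Delta$. The two ingredients needed are already available: by Proposition \ref{estim}, $\Delta$ is entire of order $\rho = 1/2$, so $\lfloor \rho \rfloor = 0$ and the exponential factor in the Hadamard representation reduces to a constant; and by Proposition \ref{racDelt}, the zeros $(\alpha_k)$ are real and simple (they are the opposites of the Dirichlet eigenvalues of $H = -d^2/dx^2 + q$). It remains to check that the exponent of convergence of $(\alpha_k)$ is at most $1$, so that the Weierstrass primary factors reduce to the linear factors $(1 - z/\alpha_k)$.

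For the convergence statement, I would either invoke the standard Weyl asymptotics for the Dirichlet spectrum of a regular Sturm-Liouville operator on $(0,1)$, which yields $|\alpha_k| \sim k^2\pi^2$ and hence $\sum_{k} 1/|\alpha_k| < \infty$, or alternatively extract the same conclusion directly from Proposition \ref{estim}: the asymptotic $\Delta(z) \sim \sinh(\sqrt{z})/\sqrt{z}$ on $\Pi^+$ together with a Phragm\'en-Lindel\"of / Jensen formula argument in the disc of radius $r$ gives $n(r) = O(\sqrt{r})$, from which the convergence of $\sum_k 1/|\alpha_k|$ follows. Either way the product
\begin{equation*}
    \prod_{k=0}^{\infty}\left(1-\frac{z}{\alpha_k}\right)
\end{equation*}
converges absolutely and locally uniformly on $\mathbb{C}$, and Hadamard's theorem yields the existence of a constant $C \in \mathbb{C}$ such that $\Delta(z) = C \prod_{k=0}^{\infty}(1 - z/\alpha_k)$. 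The constant is then identified by $C = \Delta(0)$, which is real since $\Delta$ takes real values on $\mathbb{R}$. Note that $0$ is not itself a zero of $\Delta$ under the standing assumption $\lambda \notin \sigma(-\Delta_g)$ (taking $m=0$ so that $\mu_0 = 0$ is not of the form $-\alpha_k$), so no extra prefactor $z^m$ is required.

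There is essentially no obstacle here: the whole content of the statement is that "order strictly less than $1$" plus "absolutely summable reciprocal zeros" forces the Hadamard product to be a bare canonical product with no exponential correction. The only mildly technical point is justifying the exponent of convergence, and this is immediate from either viewpoint described above.
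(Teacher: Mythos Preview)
Your proposal is correct and follows essentially the same approach as the paper: apply Hadamard's factorization theorem using that $\Delta$ has order $1/2$ (Proposition \ref{estim}), and then rule out a zero at the origin via the standing assumption $\lambda \notin \sigma(-\Delta_g)$ combined with Proposition \ref{racDelt} (the paper spells this last point out by constructing the would-be Dirichlet eigenfunction $u_0 Y_0$). You additionally justify the exponent of convergence of $(\alpha_k)$ and the reality of $C = \Delta(0)$, which the paper leaves implicit.
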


\medskip

\begin{proof}
As a consequence of Proposition \ref{estim} and Hadamard's factorization theorem, we can write $\Delta(z)$ as the infinite product :
\begin{equation*}
\Delta(z)=Cz^p\prod_{k=0}^{\infty}\bigg(1-\frac{z}{\alpha_k}\bigg),
\end{equation*}

\noindent with $p\in\{0,1\}$. In order to prove that $z=0$ is not a root of $\Delta$, we use the same argument as in \cite{daude2015non} (Remark 3.1 p.19). If $\Delta(0)=0$, it follows from Proposition \ref{racDelt}, that there is an eigenfunction $u_0$ associated to the eigenvalue $0$ for the operator $\displaystyle \displaystyle H= -\frac{d^2}{dx^2}+q$. But, from Proposition \ref{equiveqdiff}, the function $u:=u_0Y_0$ is then a nontrivial solution of the Dirichlet problem :
\begin{equation*}
\left\{
\begin{aligned}
& -\Delta_g u=\lambda u \:\:{\rm{in}}\:\:M\\
& u=0\:\:{\rm{on}}\:\:\partial M, \end{aligned}\right.
\end{equation*}

\medskip

\noindent
which is not possible since  $\lambda\notin\sigma(-\Delta_g)$.
\end{proof}

\medskip

\begin{remark}
As a by-product, we see that $\Delta (z)$ is uniquely determined by its (simple) roots (up to a multiplicative constant).
\end{remark}

$\quad$

\noindent
Now, consider the Weyl-Titchmarsh solutions $\psi$ and $\phi$ of (\ref{eqdif}) having the form :
\begin{equation*}
	\psi(x)=c_0(x)+M(z)s_0(x),\quad \phi(x)=c_1(x)-N(z)s_1(x)
\end{equation*}
and satisfying the Dirichlet boundary condition at $x=1$ and $x=0$ respectively. $M(z)$, (resp. $N(z)$) are called the \textit{Weyl-Titchmarsh functions} associated to (\ref{eqdif}) and, using Wronskian identities, we have:

\begin{proposition}
The Weyl-Titchmarsh functions $M$ and $N$ can be written as :
\begin{equation*}
	\forall z\in\mathbb{C},\:\:M(z)=-\frac{D(z)}{\Delta(z)},\quad N(z)=-\frac{E(z)}{\Delta(z)}.
\end{equation*}
\end{proposition}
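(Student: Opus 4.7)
The plan is to exploit directly the Dirichlet boundary conditions that are built into the very definitions of the Weyl--Titchmarsh solutions $\psi$ and $\phi$. Both $\{c_0,s_0\}$ and $\{c_1,s_1\}$ are fundamental systems for the equation (\ref{eqdif}), so every solution is uniquely a linear combination of either pair. The strategy is therefore to write each Weyl--Titchmarsh solution in the dual basis, enforce the prescribed Dirichlet condition at the far endpoint, and read off the coefficient $M(z)$ or $N(z)$ by a single scalar identification. Nothing deeper than linear algebra is needed.

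Concretely, I would first treat $M(z)$. Since $\psi(x)=c_0(x)+M(z)s_0(x)$ is required to vanish at $x=1$, evaluating at $x=1$ and invoking the definitions $D(z)=c_0(1)$ and $\Delta(z)=s_0(1)$ reduces the Dirichlet condition $\psi(1)=0$ to the scalar equation $D(z)+M(z)\Delta(z)=0$, from which one solves $M(z)=-D(z)/\Delta(z)$. The same strategy applies to $N(z)$: the function $\phi(x)=c_1(x)-N(z)s_1(x)$ is required to vanish at $x=0$, so evaluating at $x=0$ and using $E(z)=c_1(0)$ together with the sign convention $\Delta(z)=-s_1(0)$ (which is the only real bookkeeping point) yields $E(z)+N(z)\Delta(z)=0$, hence $N(z)=-E(z)/\Delta(z)$.

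Finally, these equalities should be read as identities of meromorphic functions on $\mathbb{C}$: the right-hand sides develop poles exactly at the zeros of $\Delta$, which by Proposition \ref{racDelt} coincide with the Dirichlet eigenvalues of $H=-d^2/dx^2+q$, and it is precisely at those values that $\psi$ and $\phi$ cease to exist as genuine Weyl--Titchmarsh solutions. I do not anticipate any real obstacle here; once the definitions of $D$, $E$, $\Delta$ are unpacked, the proof is essentially a one-line computation for each formula.
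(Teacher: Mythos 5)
Your proof is correct and follows exactly the paper's argument: impose $\psi(1)=0$ and $\phi(0)=0$, substitute the defining linear combinations evaluated at the far endpoint, and use $D(z)=c_0(1)$, $E(z)=c_1(0)$, $\Delta(z)=s_0(1)=-s_1(0)$ to read off $M(z)=-D(z)/\Delta(z)$ and $N(z)=-E(z)/\Delta(z)$. The closing remark about the poles of the right-hand sides at the zeros of $\Delta$ is a sensible clarification of the statement (the identities are between meromorphic functions), but it is not part of the paper's proof and is not needed.
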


\begin{proof}
\noindent Since $\psi(1)=0$ and $\phi(0)=0$, we have :

\begin{center}
$\displaystyle M(z)=-\frac{c_0(1,z)}{s_0(1,z)}=-\frac{D(z)}{\Delta(z)},\quad$ $\displaystyle N(z)=\frac{c_1(0,z)}{s_1(0,z)}=-\frac{E(z)}{\Delta(z)}$.
\end{center}
\end{proof}

\noindent
The four previous propositions are the key points to solve our uniqueness result. We will see that the Steklov spectrum can be expressed in terms of the Weyl-Titchmarsh and characteristic functions defined above. We will take advantage of the holomorphic properties of $\Delta(z)$, $D(z)$ and $E(z)$  and we will use to the Nevanlinna theorem, (see the next section for details).

\medskip

\subsection{Link between the DN map and the Weyl-Titchmarsh functions}

\medskip
\noindent
First, we remark that, thanks to separation of variables, $\Lambda_g(\lambda)$ leaves invariant each subspace spanned by $\big\{(Y_m,0),(0,Y_m)\big\}$. Indeed, if $u$ is the solution of (\ref{Schr}), we have for each $\psi\in H^{1/2}(\partial M)$:
\begin{equation*}
\Lambda_g(\lambda)\psi=\Lambda_g(\lambda)\begin{pmatrix}
\psi^0\\
\psi^1
\end{pmatrix}=\begin{pmatrix}
(\partial_\nu u)_{{|{\Gamma_0}}}\\
(\partial_\nu u)_{{|{\Gamma_1}}}
\end{pmatrix}=\begin{pmatrix}
-\frac{1}{\sqrt{f(0)}}(\partial_x u)_{{|{x=0}}}\\
\frac{1}{\sqrt{f(1)}}(\partial_x u)_{{|{x=1}}}
\end{pmatrix}
\end{equation*}
Consequently, for every $m\in\mathbb{N}$ :
\begin{equation*}
	\Lambda_g(\lambda)\begin{pmatrix}
	\psi^0_m\\
	\psi^1_m
	\end{pmatrix}\otimes Y_m=\begin{pmatrix}
	-\frac{1}{\sqrt{f(0)}}u_m'(0)\\
	\frac{1}{\sqrt{f(1)}}u'_m(1)
	\end{pmatrix}\otimes Y_m.
\end{equation*}

\medskip

\noindent
Its restriction on each space spanned by $(1,0)\otimes Y_m$ and $(0,1)\otimes Y_m$ is denoted $\Lambda_g^m(\lambda)$. We can write $\Lambda_g^m(\lambda)$
the $2\times2$ matrix

\noindent Set :
\begin{equation*}
   \begin{pmatrix}
	L^m(\lambda)&T^m_R(\lambda)\\
	T^m_L(\lambda)&R^m(\lambda)
	\end{pmatrix}
\end{equation*}
and we have by definition:

\begin{equation*}
	\begin{pmatrix}
	L^m(\lambda)&T^m_R(\lambda)\\
	T^m_L(\lambda)&R^m(_\lambda)
	\end{pmatrix}\begin{pmatrix}
	\psi_m^0\\
	\psi_m^1
	\end{pmatrix}=\begin{pmatrix}
	-\frac{1}{\sqrt{f(0)}}u_m'(0)\\
	\frac{1}{\sqrt{f(1)}}u'_m(1)
	\end{pmatrix}.
\end{equation*}

$\quad$

\medskip

\noindent
The full Steklov spectrum is then equal to the union of the eigenvalues of each operator $\Lambda_g^m(\lambda)$. In the next Proposition, we express the restricted
operator $\Lambda_g^m(\lambda)$ in terms of the Weyl-Titchmarsh functions.

\medskip

\begin{proposition} \label{Mat} For all $m\in\mathbb{N}$, we have :
	
	\begin{equation*}
		\Lambda_g^m(\lambda)=\begin{pmatrix}
	-\frac{M(\mu_m)}{\sqrt{f(0)}}+\frac{\ln'(h)(0)}{4\sqrt{f(0)}}&-\frac{1}{\sqrt{f(0)}}\frac{h^{1/4}(1)}{h^{1/4}(0)}\frac{1}{\Delta(\mu_m)}\\
	-\frac{1}{\sqrt{f(1)}}\frac{h^{1/4}(0)}{h^{1/4}(1)}\frac{1}{\Delta(\mu_m)}&-\frac{N(\mu_m)}{\sqrt{f(1)}}-\frac{\ln'(h)(1)}{4\sqrt{f(1)}}
	\end{pmatrix}
	\end{equation*}
	where $h=:f^{n-2}$
\end{proposition}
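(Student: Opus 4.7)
The plan is to expand the $m$-th Fourier mode $v_m$ of the rescaled solution on the Weyl--Titchmarsh basis $\{\phi,\psi\}$, read off the coefficients from the Dirichlet data, compute the boundary derivatives using the explicit values of $\phi$, $\psi$ at the endpoints, and finally translate back from $v_m$ to $u_m$ via the conformal factor $h^{1/4}=f^{(n-2)/4}$.

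First, I would note that $\phi$ and $\psi$ are two linearly independent solutions of the Sturm--Liouville equation in (\ref{S-T}) (their Wronskian equals $1$ up to a constant in $z$, and is nonzero since $\Delta(\mu_m)\neq 0$ by Proposition \ref{racDelt} and the hypothesis $\lambda\notin\sigma(-\Delta_g)$). Therefore one may write $v_m(x)=A\,\phi(x)+B\,\psi(x)$. Since $\phi(0)=0$, $\psi(0)=c_0(0)+M(\mu_m)s_0(0)=1$, $\psi(1)=0$, and $\phi(1)=c_1(1)-N(\mu_m)s_1(1)=1$, the Dirichlet boundary data from (\ref{S-T}) yield immediately
\begin{equation*}
B=v_m(0)=h^{1/4}(0)\,\psi_m^0,\qquad A=v_m(1)=h^{1/4}(1)\,\psi_m^1.
\end{equation*}

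Second, I would compute the four derivatives at the endpoints. From $\psi=c_0+M(\mu_m)s_0$ and (\ref{solfond}) one gets $\psi'(0)=M(\mu_m)$ and $\phi'(1)=-N(\mu_m)$. For the ``cross'' derivatives, observe that $\phi$ vanishes at $0$, so $\phi=c\,s_0$ for some constant $c$; evaluating at $x=1$ gives $1=c\,s_0(1)=c\,\Delta(\mu_m)$, hence $\phi'(0)=1/\Delta(\mu_m)$. Similarly $\psi=c'\,s_1$ yields $\psi'(1)=-1/\Delta(\mu_m)$ (using $s_1(0)=-\Delta(\mu_m)$ from the definition of $\Delta$). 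Substituting these into $v_m'(0)$ and $v_m'(1)$ expresses both quantities linearly in $(\psi_m^0,\psi_m^1)$ with coefficients involving $M(\mu_m)$, $N(\mu_m)$, $1/\Delta(\mu_m)$, and the ratios $h^{1/4}(0)/h^{1/4}(1)$ and $h^{1/4}(1)/h^{1/4}(0)$.

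Finally, the relation $u_m=h^{-1/4}v_m$ gives $u_m'(x)=h^{-1/4}(x)\,v_m'(x)-\tfrac14 (\ln h)'(x)\,h^{-1/4}(x)\,v_m(x)$; evaluated at $x=0,1$ and multiplied by $\mp 1/\sqrt{f(0)}$, $1/\sqrt{f(1)}$ respectively (the outward unit normals being $\mp f(k)^{-1/2}\partial_x$), this reproduces exactly the four entries of the claimed matrix. The step that needs the most care is bookkeeping this last conformal correction: the logarithmic-derivative terms $\tfrac{1}{4}(\ln h)'(k)/\sqrt{f(k)}$ on the diagonal arise solely from the $h^{1/4}$ change of variables, and must be kept track of when $v_m(k)\neq 0$ (the off-diagonal entries receive no such correction because they do not involve $v_m$ at the same endpoint). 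No deep ingredient is required beyond Propositions \ref{racDelt} and the Wronskian identities already recorded; the argument is a direct computation.
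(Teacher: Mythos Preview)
Your proof is correct. The route you take differs from the paper's in one clean way: you expand $v_m$ directly on the Weyl--Titchmarsh pair $\{\phi,\psi\}$, whereas the paper expands $v_m$ simultaneously on the two fundamental systems $\{c_0,s_0\}$ and $\{c_1,s_1\}$, writes $v_m=\alpha c_0+\beta s_0=\gamma c_1+\delta s_1$, and then inverts a $2\times 2$ linear system to express $\beta=v_m'(0)$ and $\delta=v_m'(1)$ in terms of $\alpha=v_m(0)$ and $\gamma=v_m(1)$. Your choice of basis is better adapted to the Dirichlet data (since $\phi(0)=\psi(1)=0$ and $\phi(1)=\psi(0)=1$), so the coefficients $A=v_m(1)$, $B=v_m(0)$ fall out without any inversion, and the cross derivatives $\phi'(0)=1/\Delta$, $\psi'(1)=-1/\Delta$ come from the one-line observation $\phi=s_0/\Delta$, $\psi=-s_1/\Delta$. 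The paper's approach has the mild advantage of not needing the Weyl--Titchmarsh solutions at all (only $c_i,s_i$), but in the end both computations are equivalent and equally elementary; yours is just a bit more streamlined. One small imprecision: the Wronskian $W(\phi,\psi)$ is not ``$1$ up to a constant'' but exactly $-1/\Delta(\mu_m)$; your conclusion that it is nonzero (hence $\{\phi,\psi\}$ is a basis) is nonetheless correct, and the justification that $\Delta(\mu_m)\neq 0$ is precisely the argument given in the proof of Proposition~\ref{Prodinf}.
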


\medskip

\begin{proof}

\noindent Recall that $v_m$ is the $m$-th Fourier coefficient of $v=h^{1/4}u$ with respect to $Y_m$. We have :

\begin{equation*}
\begin{aligned}
\Lambda_g^m(\lambda)\begin{pmatrix}
\psi_m^0\\
\psi_m^1
\end{pmatrix}&=\begin{pmatrix}
-\frac{1}{\sqrt{f(0)}}u'_m(0)\\
\frac{1}{\sqrt{f(1)}}u'_m(1)
\end{pmatrix}\\
&=\begin{pmatrix}
-\frac{v_m'(0)}{h^{1/4}(0)\sqrt{f(0)}}+\frac{1}{4\sqrt{f(0)}}\frac{h'(0)}{h(0)}u_m(0)\\
\frac{v_m'(1)}{h^{1/4}(1)\sqrt{f(1)}}-\frac{1}{4\sqrt{f(1)}}\frac{h'(1)}{h(1)}u_m(1)
\end{pmatrix}\\
&=\begin{pmatrix}
-\frac{v_m'(0)}{h^{1/4}(0)\sqrt{f(0)}}\\
\frac{v_m'(1)}{h^{1/4}(1)\sqrt{f(1)}}
\end{pmatrix}+\begin{pmatrix}
\frac{1}{4\sqrt{f(0)}}\frac{h'(0)}{h(0)}\psi_m^0\\
-\frac{1}{4\sqrt{f(1)}}\frac{h'(1)}{h(1)}\psi_m^1
\end{pmatrix}
\end{aligned}
\end{equation*}

\medskip

 \noindent It remains to find a simple expression of the first term of the (RHS). Since $v_m$ is a solution of (\ref{S-T}), it can be written as a linear combination of the fundamental solutions $\big\{c_0,s_0\big\}$ or $\big\{c_1,s_1\big\}$ defined in $(\ref{solfond})$, i.e there exists $(\alpha,\beta,\gamma,\delta)\in\mathbb{C}^4$ such that :
\begin{equation*}
	v_m=\alpha c_0+\beta s_0=\gamma c_1+\delta s_1.
\end{equation*}

\medskip

\noindent Consequently:
\begin{equation*}
\begin{pmatrix}
	v_m(0)\\v_m(1)
	\end{pmatrix}=\begin{pmatrix}
	\alpha\\
	\gamma
	\end{pmatrix}=\begin{pmatrix}
	\gamma c_1(0)+\delta s_1(0)\\
	\alpha c_0(1)+\beta s_0(1)
	\end{pmatrix}
\end{equation*}

\medskip

\noindent The second equality can be rewritten as:
\begin{equation*}
	\begin{pmatrix}
	\alpha-\gamma c_1(0)\\ \gamma-\alpha c_0(1)
	\end{pmatrix}=\begin{pmatrix}
	\delta s_1(0)\\ \beta s_0(1)
	\end{pmatrix},
\end{equation*}
which is equivalent to :
\begin{equation*}
	\begin{pmatrix}
	1&-c_1(0)\\
	-c_0(1)&1
	\end{pmatrix}\begin{pmatrix}
	\alpha\\
	\gamma
	\end{pmatrix}=\begin{pmatrix}
	\delta s_1(0)\\
	\beta s_0(1)
	\end{pmatrix}.
\end{equation*}

$\quad$

\medskip

\noindent As
\begin{equation*}
	\begin{pmatrix}
	\alpha\\ \gamma
	\end{pmatrix}=\begin{pmatrix}
	h^{1/4}(0)u_m(0)\\ h^{1/4}(1)u_m(1)
	\end{pmatrix}=\begin{pmatrix}
	h^{1/4}(0)\psi_m^0\\ h^{1/4}(1)\psi_m^1
	\end{pmatrix}
\end{equation*}

\noindent we obtain:

\medskip

\begin{equation*}
\begin{pmatrix}
\frac{h^{1/4}(0)}{s_1(0)}&-\frac{h^{1/4}(1)c_1(0)}{s_1(0)}\\
-\frac{h^{1/4}(0)c_0(1)}{s_0(1)}&\frac{h^{1/4}(1)}{s_0(1)}
\end{pmatrix}\begin{pmatrix}
\psi_m^0\\
\psi_m^1
\end{pmatrix}=\begin{pmatrix}
\delta\\
\beta
\end{pmatrix}.
\end{equation*}

\medskip

$\quad$

\noindent But $\delta=v'_m(1)$ and $\beta=v'_m(0)$. Thus:

\medskip

\begin{equation*}
	\begin{aligned}
	\Lambda_g^m(\lambda)\begin{pmatrix}
	\psi_m^0\\
	\psi_m^1
	\end{pmatrix}&=\begin{pmatrix}
	-\frac{v_m'(0)}{h^{1/4}(0)\sqrt{f(0)}}\\
	\frac{v_m'(1)}{h^{1/4}(1)\sqrt{f(1)}}
	\end{pmatrix}+\begin{pmatrix}
	\frac{1}{4\sqrt{f(0)}}\frac{h'(0)}{h(0)}\psi_m^0\\
	-\frac{1}{4\sqrt{f(1)}}\frac{h'(1)}{h(1)}\psi_m^1
	\end{pmatrix}\\
	&=\begin{pmatrix}\frac{1}{\sqrt{f(0)}}\frac{c_0(1)}{s_0(1)}&-\frac{1}{\sqrt{f(0)}}\frac{h^{1/4}(1)}{h^{1/4}(0)s_0(1)}\\
	\frac{1}{\sqrt{f(1)}}\frac{h^{1/4}(0)}{h^{1/4}(1)s_1(0)}&-\frac{1}{\sqrt{f(1)}}\frac{c_1(0)}{s_1(0)}
	\end{pmatrix}\begin{pmatrix}
	\psi_m^0\\\psi_m^1
	\end{pmatrix}+\begin{pmatrix}
	\frac{1}{4\sqrt{f(0)}}\frac{h'(0)}{h(0)}\psi_m^0\\
	-\frac{1}{4\sqrt{f(1)}}\frac{h'(1)}{h(1)}\psi_m^1
	\end{pmatrix}\\
&=\begin{pmatrix}
\frac{1}{\sqrt{f(0)}}\frac{c_0(1)}{s_0(1)}+\frac{1}{4\sqrt{f(0)}}\frac{h'(0)}{h(0)}&-\frac{1}{\sqrt{f(0)}}\frac{h^{1/4}(1)}{h^{1/4}(0)s_0(1)}\\
\frac{1}{\sqrt{f(1)}}\frac{h^{1/4}(0)}{h^{1/4}(1)s_1(0)}&-\frac{1}{\sqrt{f(1)}}\frac{c_1(0)}{s_1(0)}-\frac{1}{4\sqrt{f(1)}}\frac{h'(1)}{h(1)}
\end{pmatrix}\begin{pmatrix}
\psi_m^0\\
\psi_m^1
\end{pmatrix}\\
\end{aligned}
\end{equation*}

\noindent Recalling that:

\begin{equation*}
	M(\mu_m)=-\frac{c_0(1)}{s_0(1)},\quad N(\mu_m)=\frac{c_1(0)}{s_1(0)}\quad {\rm{et}}\quad \Delta(\mu_m)=-s_1(0)=s_0(1),
\end{equation*}

\noindent we get finally:

\begin{equation*}
	\Lambda_g^m(\lambda)=\begin{pmatrix}
	-\frac{M(\mu_m)}{\sqrt{f(0)}}+\frac{1}{4\sqrt{f(0)}}\frac{h'(0)}{h(0)}&-\frac{1}{\sqrt{f(0)}}\frac{h^{1/4}(1)}{h^{1/4}(0)}\frac{1}{\Delta(\mu_m)}\\
	-\frac{1}{\sqrt{f(1)}}\frac{h^{1/4}(0)}{h^{1/4}(1)}\frac{1}{\Delta(\mu_m)}&-\frac{N(\mu_m)}{\sqrt{f(1)}}-\frac{1}{4\sqrt{f(1)}}\frac{h'(1)}{h(1)}
	\end{pmatrix}.
\end{equation*}
\end{proof}

\section{A characterisation by the trace and the determinant}

\noindent In this section, $(K,g_K)$ is still an arbitrary closed manifold of dimension $n-1$ and $M=[0,1]\times K$ is equipped with the metric $g=f(x)(dx^2+g_K)$. We have the following result:

\medskip

\begin{proposition}
	\label{trdet}
	Assume that, for every $m\in\mathbb{N}$, we have :
	
	\begin{center}
	$\det(\Lambda_g^m(\lambda))=\det(\Lambda^m_{\tilde{g}}(\lambda))$ and \rm{Tr}$(\Lambda_g^m(\lambda))$=Tr$(\Lambda^m_{\tilde{g}}(\lambda))$.
	\end{center}Then  :
	\begin{equation*}
	f=\tilde{f}\quad {\rm{or}}\quad f=\tilde{f}\circ \eta
	\end{equation*}
	
	\noindent where, for all $x\in [0,1],\:\eta(x)=1-x$.
\end{proposition}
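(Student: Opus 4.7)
\noindent The plan is to upgrade the two discrete equalities at $z=\mu_m$ to identities of meromorphic functions of $z\in\mathbb{C}$, and then to reduce the problem to a Borg--Marchenko uniqueness statement for the potential $q$ of (\ref{S-T}). By Proposition \ref{Mat}, the trace and determinant of $\Lambda_g^m(\lambda)$, viewed as functions of a complex $z$ replacing $\mu_m$, are meromorphic with poles at the zeros of $\Delta_g$. Multiplying the trace (resp.\ determinant) identity by $\Delta_g\Delta_{\tilde g}$ (resp.\ $\Delta_g^2\Delta_{\tilde g}^2$) produces entire functions that, by Proposition \ref{estim}, have order $1/2$ and exponential type at most $2$ in the variable $w=\sqrt z$. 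These vanish at $w_m=\sqrt{\mu_m}$, and since $\sum 1/w_m=+\infty$ by Weyl's law on $K$ while the type $2<\pi$, a Carlson/M\"untz-type theorem forces them to vanish identically, so that the trace and determinant identities extend to meromorphic identities on all of $\mathbb{C}$.

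\noindent From $M(z),N(z)\sim -\sqrt z$ at $+\infty$, the leading orders of the two identities yield $\tfrac{1}{\sqrt{f(0)}}+\tfrac{1}{\sqrt{f(1)}}=\tfrac{1}{\sqrt{\tilde f(0)}}+\tfrac{1}{\sqrt{\tilde f(1)}}$ and $f(0)f(1)=\tilde f(0)\tilde f(1)$, hence $\{f(0),f(1)\}=\{\tilde f(0),\tilde f(1)\}$; this yields a Case A (pointwise equality) and a Case B (swap via $\eta$). To align the Weyl--Titchmarsh data I would next show $\Delta_g=\Delta_{\tilde g}$ by examining residues of the trace identity at a root $\alpha$ of $\Delta_g$ not shared by $\Delta_{\tilde g}$: plugging $c_1=E(z)\,c_0+c_1'(0)\,s_0$ into $c_1(1)=1$ gives $E(z)D(z)+c_1'(0)\Delta(z)=1$, so $D(\alpha)E(\alpha)=1$; the vanishing-residue condition then reduces to $D_g(\alpha)^2=-\sqrt{f(0)/f(1)}<0$, impossible since $D_g$ is real. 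Hence the Dirichlet spectra of $-d^2/dx^2+q_g$ and $-d^2/dx^2+q_{\tilde g}$ coincide, and by Proposition \ref{Prodinf} together with the common leading asymptotic $\Delta(z)\sim\sinh(\sqrt z)/\sqrt z$, $\Delta_g=\Delta_{\tilde g}$.

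\noindent With $\Delta_g=\Delta_{\tilde g}$ and the boundary values matched, the determinant identity collapses to $AB=\tilde A\tilde B$ for the diagonal entries $A,B$ of $\Lambda_g^{\,\cdot}(\lambda)$, and combined with $A+B=\tilde A+\tilde B$ this forces the set equality $\{A,B\}=\{\tilde A,\tilde B\}$ of meromorphic functions. Leading asymptotics $A\sim\sqrt z/\sqrt{f(0)}$, $B\sim\sqrt z/\sqrt{f(1)}$ rule out the crossed pairing unless $f(0)=f(1)$. The refined asymptotic $M(z)=-\sqrt z+O(1/\sqrt z)$ (and similarly for $N$) then kills the possible additive constant in $M_g-M_{\tilde g}$ (or $M_g-N_{\tilde g}$), leaving $M_g=M_{\tilde g}$ and $N_g=N_{\tilde g}$ in Case A, or $M_g=N_{\tilde g}$ and $N_g=M_{\tilde g}$ in Case B. By the Borg--Marchenko theorem one obtains $q_g=q_{\tilde g}$ in the first case and $q_g=q_{\tilde g}\circ\eta$ in the second; the same matching also yields $h'(0)/h(0)=\tilde h'(0)/\tilde h(0)$ (resp.\ $h'(0)/h(0)=-\tilde h'(1)/\tilde h(1)$), i.e.\ $f'(0)=\tilde f'(0)$ (resp.\ $f'(0)=-\tilde f'(1)$).

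\noindent For $n=2$ the relation $q=-\lambda f$ makes the conclusion immediate. For $n\ge 3$, setting $w=f^{(n-2)/4}$ transforms the definition of $q$ into a second order ODE $w''=qw+\lambda w^{(n+2)/(n-2)}$, whose Cauchy problem with initial data $(w(0),w'(0))$ admits a unique solution; since $w(0)$ and $w'(0)$ are determined by $f(0)$ and $f'(0)$, we conclude $f=\tilde f$ in Case A and $f=\tilde f\circ\eta$ in Case B. The main obstacle is the joint use of trace and determinant in the residue analysis that forces $\Delta_g=\Delta_{\tilde g}$: this step crucially exploits the algebraic identity $DE=1$ at Dirichlet eigenvalues together with the precise cancellation of the $1/\Delta^2$ off-diagonal contributions, and without it one cannot reduce the ``set'' equality $\{A,B\}=\{\tilde A,\tilde B\}$ to a pointwise identification of Weyl--Titchmarsh functions amenable to Borg--Marchenko.
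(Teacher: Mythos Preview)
Your argument is correct and follows essentially the same route as the paper: extend the discrete trace/determinant equalities to meromorphic identities, read off $\{f(0),f(1)\}=\{\tilde f(0),\tilde f(1)\}$ from the leading asymptotics, show $\Delta_g=\Delta_{\tilde g}$, reduce to a pointwise identification of the diagonal entries, and conclude via Borg--Marchenko plus Cauchy--Lipschitz. Two remarks are worth making.

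First, your step $\Delta_g=\Delta_{\tilde g}$ differs from the paper's. The paper evaluates \emph{both} the cleared trace and determinant identities at a zero $\alpha$ of $\Delta_g$ to derive the system $\big(D(\alpha)/\sqrt{f(0)}+E(\alpha)/\sqrt{f(1)}\big)\tilde\Delta(\alpha)=0$ and $\big(D(\alpha)E(\alpha)-1\big)\tilde\Delta(\alpha)^2=0$, and plays them off against each other. You instead use only the trace identity together with the Wronskian relation $E(z)D(z)+c_1'(0)\Delta(z)=1$ (hence $D(\alpha)E(\alpha)=1$), which yields the same contradiction $D(\alpha)^2=-\sqrt{f(0)/f(1)}$ directly. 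This is a genuine simplification: the determinant identity is not needed for this step. Your closing comment that the ``joint use of trace and determinant'' is crucial for $\Delta_g=\Delta_{\tilde g}$ is therefore slightly off --- the determinant identity is only really needed afterwards, to obtain $AB=\tilde A\tilde B$.

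Second, two small imprecisions. Your appeal to ``Carlson/M\"untz, type $2<\pi$'' does not quite fit: the zeros $\sqrt{\mu_m}$ are not integers, so Carlson's theorem is not directly applicable. What you need (and what the paper uses) is the uniqueness theorem for the Nevanlinna class on $\Pi^+$ together with $\sum 1/\sqrt{\mu_m}=+\infty$, which follows from Weyl's law on $K$. Also, the passage from ``$\{A(z),B(z)\}=\{\tilde A(z),\tilde B(z)\}$ for every $z$'' to an equality of \emph{functions} $\{A,B\}=\{\tilde A,\tilde B\}$ requires the small connectedness/analytic continuation argument that the paper spells out (if $A\not\equiv\tilde A$, then $A=\tilde B$ on an open set, hence everywhere). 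You assert the conclusion correctly, but this step should be made explicit.
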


\medskip

\begin{remark}
	\label{Remarque_importante_1}
	This Proposition is still true if the equalities about the trace and the determinant of $\Lambda_g^m(\lambda)$ are are satisfied for $m\ge m_0$, with $m_0\in\mathbb{N}$.
\end{remark}

\medskip

\noindent In order to prove this proposition, let us calculate the eigenvalues of the operator $\Lambda_g^m(\lambda)$ and their asymptotics.

\medskip

\begin{lemma}
	\label{vp}
	$\Lambda_g^m(\lambda)$ has two eigenvalues $\lambda^-(\mu_m)$ and $\lambda^+(\mu_m)$ whose asymptotics are given by :
	\begin{equation*}
	\left\{
	\begin{aligned}
	&\lambda^-(\mu_m)\underset{m\to\infty}{=}\frac{\sqrt{\mu_m}}{\sqrt{f(1)}}-\frac{\ln(h)'(1)}{4\sqrt{f(1)}}+O\bigg(\frac{1}{\sqrt{\mu_m}}\bigg)	 \\
	&\lambda^+(\mu_m)\underset{m\to\infty}{=}\frac{\sqrt{\mu_m}}{\sqrt{f(0)}}+\frac{\ln(h)'(0)}{4\sqrt{f(0)}}+O\bigg(\frac{1}{\sqrt{\mu_m}}\bigg).
	\end{aligned}\right.
	\end{equation*}
\end{lemma}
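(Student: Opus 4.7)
The plan is to compute the eigenvalues of the $2\times 2$ matrix $\Lambda_g^m(\lambda)$ given in Proposition \ref{Mat} by the quadratic formula, and then to extract the asymptotics as $m\to\infty$ from the Weyl-Titchmarsh data of Proposition \ref{estim}. Write
\[
\Lambda_g^m(\lambda) = \begin{pmatrix} a_m & c_m \\ d_m & b_m \end{pmatrix},
\]
with $a_m = -\tfrac{M(\mu_m)}{\sqrt{f(0)}} + \tfrac{\ln'(h)(0)}{4\sqrt{f(0)}}$, $b_m = -\tfrac{N(\mu_m)}{\sqrt{f(1)}} - \tfrac{\ln'(h)(1)}{4\sqrt{f(1)}}$, and off-diagonal entries satisfying $c_m d_m = \tfrac{1}{\sqrt{f(0)f(1)}} \Delta(\mu_m)^{-2}$. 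The eigenvalues are
\[
\lambda^\pm(\mu_m) = \tfrac{1}{2}(a_m + b_m) \pm \tfrac{1}{2}\sqrt{(a_m-b_m)^2 + 4 c_m d_m}.
\]
Since Proposition \ref{estim} gives $\Delta(\mu_m)\sim \sinh(\sqrt{\mu_m})/\sqrt{\mu_m}$, the product $c_m d_m$ is exponentially small, of order $O(\mu_m e^{-2\sqrt{\mu_m}})$.

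The next step is to derive the sharper asymptotics $-M(\mu_m) = \sqrt{\mu_m} + O(\mu_m^{-1/2})$ and $-N(\mu_m) = \sqrt{\mu_m} + O(\mu_m^{-1/2})$, since the raw expansions of Proposition \ref{estim} would only give an $O(1)$ remainder, insufficient for the claim. I would iterate once the Volterra integral equations satisfied by $c_0, s_0, c_1, s_1$ to obtain, with $Q = \int_0^1 q$,
\[
D(z) = \cosh(\sqrt z) + \tfrac{\sinh(\sqrt z)}{2\sqrt z}\, Q + O(e^{\sqrt z}/z), \qquad \Delta(z) = \tfrac{\sinh(\sqrt z)}{\sqrt z} + \tfrac{\cosh(\sqrt z)}{2z}\, Q + O(e^{\sqrt z}/z^{3/2}),
\]
and an analogous expansion for $E(z)$. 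In the ratio $D(z)/\Delta(z)$ the two $Q$-contributions cancel identically modulo $O(e^{-2\sqrt z})$, thanks to $\coth(\sqrt z) = 1 + O(e^{-2\sqrt z})$ for large real $\sqrt z$; a short algebraic manipulation then yields $-M(z) = \sqrt z + O(z^{-1/2})$, and symmetrically for $-N(z)$.

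Finally, inserting these refined asymptotics in $a_m$ and $b_m$, I get
\[
a_m = \tfrac{\sqrt{\mu_m}}{\sqrt{f(0)}} + \tfrac{\ln'(h)(0)}{4\sqrt{f(0)}} + O(\mu_m^{-1/2}), \qquad b_m = \tfrac{\sqrt{\mu_m}}{\sqrt{f(1)}} - \tfrac{\ln'(h)(1)}{4\sqrt{f(1)}} + O(\mu_m^{-1/2}).
\]
For large $m$ the quantity $(a_m - b_m)^2$ is at least bounded below (either of order $\mu_m$ when $f(0)\neq f(1)$, or $O(1)$ when $f(0)=f(1)$ thanks to the distinct coefficients involving $\ln'(h)$), while $4 c_m d_m$ is exponentially small. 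Expanding the square root in the quadratic formula gives $\sqrt{(a_m-b_m)^2 + 4c_m d_m} = |a_m - b_m|\bigl(1 + O(\mu_m e^{-2\sqrt{\mu_m}})\bigr)$, so $\{\lambda^+(\mu_m), \lambda^-(\mu_m)\} = \{a_m, b_m\} + O(\sqrt{\mu_m} e^{-2\sqrt{\mu_m}})$. Labelling $\lambda^+$ as the branch associated with the $(1,1)$-entry and $\lambda^-$ with the $(2,2)$-entry gives precisely the stated expansions. The main obstacle is Step 2: one cannot just take the ratio of the leading asymptotics from Proposition \ref{estim}, so one must iterate the integral equations once more and verify the precise cancellation of the $O(1)$ term coming from $\int_0^1 q$.
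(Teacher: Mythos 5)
Your overall strategy matches the paper's: extract the two eigenvalues of the $2\times 2$ matrix of Proposition \ref{Mat} via the quadratic formula, then control the leading behaviour by sharpening the asymptotics of the Weyl--Titchmarsh functions. The two places where you diverge from the paper, and the one genuine soft spot, are worth flagging.

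First, for the refined expansions $-M(\mu_m)=\sqrt{\mu_m}+O(\mu_m^{-1/2})$ and $-N(\mu_m)=\sqrt{\mu_m}+O(\mu_m^{-1/2})$, the paper simply invokes Simon's theorem (\cite{simon1999new}), stated as Theorem \ref{Simon} and Corollary \ref{CorSimon}, which gives the \emph{full} asymptotic series $-M(z^2)=z+\sum_j\beta_j(0)z^{-(j+1)}$ including the explicit coefficient $\beta_0(0)=q(0)/2$. You re-derive the needed fragment from a single iteration of the Volterra equations and observe the cancellation of the $Q=\int_0^1 q$ term between $D$ and $\Delta$. This is a correct and self-contained route, and your cancellation computation is right; it is simply more work than the paper's one-line citation, and it yields less (only the $O(z^{-1/2})$ remainder, not the coefficient). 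Both are fine; the paper's citation is cleaner and is also used again later with the explicit $\beta_0$, so it is the more economical choice in context.

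Second, the paper treats $f(0)\ne f(1)$ and $f(0)=f(1)$ separately: in the first case it runs the discriminant argument you use, while in the second case (where the off-diagonal entries become equal and the matrix is symmetric) it instead applies the quasi-mode Lemma \ref{qm}. You apply the discriminant expansion uniformly. This can be made to work, but the justification you give is not quite right: when $f(0)=f(1)$ the claim that $(a_m-b_m)^2$ is ``bounded below of order $O(1)$ thanks to the distinct coefficients involving $\ln'(h)$'' fails. In dimension $n=2$ one has $h=f^{n-2}\equiv 1$, so $\ln'(h)\equiv 0$ and $a_m-b_m=\big(N(\mu_m)-M(\mu_m)\big)/\sqrt{f(0)}=O(\mu_m^{-1/2})\to 0$; and for $n\ge 3$ the constant $\frac{\ln'(h)(0)+\ln'(h)(1)}{4\sqrt{f(0)}}$ can also vanish (e.g.\ $f'(0)=-f'(1)$). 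Consequently the factor in your square-root expansion is really $1+O\big(\mu_m e^{-2\sqrt{\mu_m}}/(a_m-b_m)^2\big)$, not $1+O(\mu_m e^{-2\sqrt{\mu_m}})$. This does \emph{not} break the final statement --- since $4c_md_m=O(\mu_m e^{-2\sqrt{\mu_m}})$ is still overwhelmingly smaller than $(a_m-b_m)^2\gtrsim \mu_m^{-1}$ whenever $a_m-b_m$ is only algebraically small, and if $a_m-b_m$ ever becomes exponentially small then both eigenvalues collapse onto $\tfrac12(a_m+b_m)=a_m+O(\text{exp.\ small})=b_m+O(\text{exp.\ small})$ and the claimed asymptotics hold trivially --- but you should either state the comparison $|a_m-b_m|\gg\sqrt{\mu_m}\,e^{-\sqrt{\mu_m}}$ honestly and handle the residual case, or switch, as the paper does, to the quasi-mode lemma when $f(0)=f(1)$, which avoids the issue entirely and is arguably cleaner.

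In short: correct in substance, same skeleton as the paper, with a self-contained (if heavier) substitute for Simon's theorem and a mild overclaim about $(a_m-b_m)^2$ in the symmetric case that needs to be repaired or replaced by the paper's quasi-mode argument.
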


\medskip

\begin{remark}
	If $f(0)<f(1)$, we have $\lambda^-(\mu_m)<\lambda^+(\mu_m)$. This assumption will be made, without loss of generality, each time that $f(0)\ne f(1)$. The notations $\lambda^-(\mu_m)$ and $\lambda^+(\mu_m)$ refer to that choice.
\end{remark}

$\quad$

\begin{remark}
	The Weyl's law for the eigenvalues of the Laplace-Beltrami operator gives the following asymptotic :
	\begin{equation*}
		\mu_m = 4\pi^2 \bigg(\mathrm{Vol}(\mathbb{B}^{n-1})\mathrm{Vol}(K)\bigg)^{-\frac{2}{n-1}}m^{\frac{2}{n-1}} + O(1),
	\end{equation*} 
so, by replacing it in Lemma \ref{vp}, one has :
\begin{equation*}
	\begin{aligned}
	\lambda^-(\mu_m)&\underset{m\to\infty}{=}\frac{\sqrt{\mu_m}}{\sqrt{f(1)}}+O(1)\\
	&=\frac{2\pi}{\sqrt{f(1)}}\bigg(\mathrm{Vol}(\mathbb{B}^{n-1})\mathrm{Vol}(K)\bigg)^{-\frac{1}{n-1}}m^{\frac{1}{n-1}}+O(1).
	\end{aligned}
\end{equation*}
\noindent The boundary component $\Gamma_1$ consists in copy of $K$ equipped with the metric $\gamma_1=f(1)g_K$. It follows that we have $\displaystyle \mathrm{Vol}(\Gamma_1)=\int_{\Gamma_1}\,\mathrm{dVol}_{\gamma_1}=f(1)^{\frac{n-1}{2}}\mathrm{Vol}(K)$, hence $\displaystyle \frac{1}{\sqrt{f(1)}}=\frac{\mathrm{Vol}(K)^{\frac{1}{n-1}}}{\mathrm{Vol}(\Gamma_1)^{\frac{1}{n-1}}}$. Consequently :
\begin{equation*}
		\lambda^-(\mu_m)= 2\pi \bigg(\frac{m}{\mathrm{Vol}(\mathbb{B}^{n-1})\mathrm{Vol}(\Gamma_1)}\bigg)^{\frac{1}{n-1}}+O(1).
\end{equation*}
\noindent In other words, an asymptotic of $\lambda^-(\mu_m)$ is exactly given by the Weyl's law restricted to the connected component boundary $\Gamma_1$. In the same way, one can prove :
\begin{equation*}
	\lambda^+(\mu_m)= 2\pi \bigg(\frac{m}{\mathrm{Vol}(\mathbb{B}^{n-1})\mathrm{Vol}(\Gamma_0)}\bigg)^{\frac{1}{n-1}}+O(1).
\end{equation*}
We recognize again the Weyl's law, restricted to the connected component boundary $\Gamma_0$.
\end{remark}

\medskip

\begin{remark}
The equalities proved in Lemma \ref{vp} highlight the link that exists between the Steklov spectrum and the spectrum of $-\Delta_{g_\mathbb{S}}$. Let us denote the eigenvalues of the Laplace-Beltrami operator on $\big(\mathbb{S},f(0)g_{\mathbb{S}}\big)$ by
	\begin{equation*}
		\mu_0^{(0)}\le \mu_1^{(0)}\le \mu_2^{(0)} \le ... \to +\infty
	\end{equation*}
\noindent with $\displaystyle \mu_m^{(0)}=\frac{\mu_m}{f(0)}$ for $m\in\mathbb{N}$. Lemma \ref{vp} implies in particular that there is a constant $C^{(0)}_f>0$ only depending on the conformal factor $f$ at $x=0$ such that
\begin{equation*}
	\big|\lambda^+(\mu_m)-\sqrt{\mu_m^{(0)}}\big|\le C_f^{(0)}
\end{equation*}
This can be related to results obtained in \cite{provenzano2019weyl} (Theorem 1.7 p.2) where it is proved that, for a bounded domain $\partial\Omega\subset \mathbb{R}^n$ with boundary of class $C^2$ which has only one boundary component, there is a bound $C_{\Omega}>0$ depending on $\Omega$ such that
\begin{equation*}
|\lambda_m-\sqrt{\mu_m}|\le C_{\Omega}\quad \forall m\in\mathbb{N}.
\end{equation*}
\noindent where $\lambda_m$ and $\mu_m$ are respectively the $m^{th}$ eigenvalue of the DN map and the the $m^{th}$ eigenvalue of the Laplace-Beltrami operator on the boundary. One can notice that, in our case, $C_f^{(0)}$ only depends on the metric on the boundary component $(\mathbb{S}^{n-1},f(0)g_\mathbb{S})$. This fact can be compared to a recent result (see \cite{colbois2019steklov}) where it is proved that the previous bound $C_\Omega$ can be chosen uniformly with respect to a class of manifolds $\mathcal{M}$ satisfying some geometrical conditions only in a neighborhood of the boundary (Theorem 3, p.3). 

\medskip

\noindent In the same way, Lemma \ref{vp} implies also the existence of $C^{(1)}_f>0$ only depending on the conformal factor $f$ at $x=1$ such that
\begin{equation*}
\big|\lambda^-(\mu_m)-\sqrt{\mu_m^{(1)}}\big|\le C^{(1)}_f
\end{equation*}
\noindent where $\displaystyle \mu_m^{(1)}=\frac{\mu_m}{f(1)}$ is the $m^{th}$ eigenvalue of the Laplace-Beltrami operator on $\big(\mathbb{S},f(1)g_{\mathbb{S}}\big)$.  
\end{remark}

\noindent Let us prove Lemma \ref{vp}.

\medskip

\begin{proof} We distinguish two cases :
	
	\medskip
	
				\begin{enumerate}[label=\alph*), align=left, leftmargin=*, noitemsep]
		
		\item[$\bullet$] Assume $f(0)\ne f(1)$ (for instance $f(0)<f(1)$).
	\end{enumerate}
	
	\medskip
	
		\noindent The characteristic polynomial $P(X)$ of $\Lambda_g^m(\lambda)$ is : 						
	
	\begin{equation*}
	P(X)=X^2-{\rm{Tr}}(\Lambda_g^m(\lambda))X+\det(\Lambda_g^m(\lambda)).
	\end{equation*}
	
	$\quad$
	
	\noindent To simplify the notation, we set :
	
	\medskip
	
\begin{center}
	$\displaystyle C_0=\frac{\ln(h)'(0)}{4\sqrt{f(0)}},\quad$ $\quad\displaystyle C_1=\frac{\ln(h)'(1)}{4\sqrt{f(1)}}$.
\end{center}

\medskip
	
	\noindent Thanks to Propositions \ref{estim} and \ref{Mat}, for $m$ large enough,  ${\rm{Tr}}(\Lambda_g^m(\lambda))$ and $\det(\Lambda_g^m(\lambda))$ satisfy:
	
	\begin{equation*}
	\left\{
	\begin{aligned}
	&{\rm{Tr}}(\Lambda_g^m(\lambda))=-\frac{M(\mu_m)}{\sqrt{f(0)}}-\frac{N(\mu_m)}{\sqrt{f(1)}}+C_0-C_1.	 \\
	&\det(\Lambda_g^m)=\bigg(-\frac{M(\mu_m)}{\sqrt{f(0)}}+C_0\bigg)\bigg(-\frac{N(\mu_m)}{\sqrt{f(1)}}-C_1\bigg)+O(\mu_m e^{-2\sqrt{\mu_m}}).
	\end{aligned}\right.
	\end{equation*}
	
	\medskip
	
	$\quad$
	
	\noindent The asymptotics of the discriminant $\delta$ of $P(X)$ depending on $M(\mu_m)$ and $N(\mu_m)$ can thus be written :
	\begin{equation*}
	\begin{aligned}
	\delta &= \bigg(-\frac{M(\mu_m)}{\sqrt{f(0)}}+C_0-\frac{N(\mu_m)}{\sqrt{f(1)}}-C_1\bigg)^2-4\bigg(-\frac{M(\mu_m)}{\sqrt{f(0)}}+C_0\bigg)\bigg(-\frac{N(\mu_m)}{\sqrt{f(1)}}-C_1\bigg)\\
	&\qquad\qquad\qquad\qquad\qquad\qquad\qquad\qquad\qquad\qquad\qquad\qquad\qquad\qquad\qquad\qquad+O(\mu_m e^{-2\sqrt{\mu_m}}).\\
	&=\bigg(-\frac{M(\mu_m)}{\sqrt{f(0)}}+C_0+\frac{N(\mu_m)}{\sqrt{f(1)}}+C_1\bigg)^2+O(\mu_m e^{-2\sqrt{\mu_m}}).
	\end{aligned}
	\end{equation*}
	
	\medskip
	
	\noindent Now, let us recall the result obtained by Simon in \cite{simon1999new} :
	
	\medskip
	
	\begin{theorem}
		\label{Simon}
		$M(z^2)$ has the following asymptotic expansion :
		\begin{equation*}
		\forall A\in\mathbb{N},\: \:-M(z^2)\underset{z \to \infty}{=}z+\sum_{j=0}^{A}\frac{\beta_j(0)}{z^{j+1}}+o\bigg(\frac{1}{z^{A+1}}\bigg)
		\end{equation*}
		\noindent where, for every $x\in[0,1]$, $\beta_j(x)$ is defined by : $\left\{\begin{aligned}
		&\beta_0(x)=\frac{1}{2}q(x)\\
		&\beta_{j+1}(x)=\frac{1}{2}\beta'_j(x)+\frac{1}{2}\sum_{l=0}^{j}\beta_l(x)\beta_{j-l}(x).
		\end{aligned}\right.$
	\end{theorem}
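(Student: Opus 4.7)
The plan is to work with the Riccati equation satisfied by the logarithmic derivative of the Weyl-Titchmarsh solution. Recall from the previous subsection that
\[
\psi(x,z) = c_0(x,z) + M(z)\, s_0(x,z)
\]
solves $-\psi'' + q\psi = -z\psi$ together with the Dirichlet boundary condition $\psi(1,z)=0$. Setting $m(x,z) := \psi'(x,z)/\psi(x,z)$, a direct computation shows $m' + m^2 = q + z$, and the initial conditions $\psi(0,z)=1$, $\psi'(0,z)=M(z)$ give $m(0,z)=M(z)$. Substituting $z = w^2$ and writing $\phi := -m$, the equation becomes the Riccati equation $\phi^2 - \phi' = q + w^2$, which is the workhorse for the whole proof.

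Next, I would substitute the ansatz
\[
\phi(x,w) = w + \sum_{j=0}^{\infty} \frac{\beta_j(x)}{w^{j+1}}
\]
into the Riccati equation and identify coefficients by powers of $w$. The leading balance $\phi^2 - w^2 = q$ forces $\beta_0 = q/2$, and matching the coefficient of $w^{-k}$ for each $k\ge 1$ yields Simon's recurrence for $\beta_{j+1}$ in terms of $\beta_j'$ and convolution sums of lower-index $\beta_l$. This step is purely algebraic and produces the polynomial expressions in $q$ and its derivatives stated in the theorem.

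The rigorous justification, namely that the formal series is genuinely asymptotic to $\phi(x,w)$ as $w\to+\infty$, is what requires real work. Fix $A$, define the partial sum $\Phi_A(x,w) := w + \sum_{j=0}^{A} \beta_j(x)/w^{j+1}$, and study the remainder $r_A := \phi - \Phi_A$. Using the Riccati equation, $r_A$ satisfies a linear inhomogeneous ODE with coefficient $\phi + \Phi_A \approx 2w$ and a source term of order $w^{-A-1}$ that arises because $\Phi_A$ only solves the Riccati equation up to that order. One then integrates this linear ODE against the damping factor $e^{-2wx}$, or equivalently sets up a Volterra integral equation for $r_A$ and closes a fixed-point argument for $w$ large. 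An a priori bound $\phi(x,w) = w + O(1/w)$ at the leading order, needed to make the linearization legitimate, can be derived from Proposition \ref{estim} together with the explicit formula $M(z) = -D(z)/\Delta(z)$.

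The main obstacle is uniform control of the error $r_A$ near the left endpoint $x=0$, where $M(z) = m(0,z)$ is read off. The natural exponential damping in the Green's kernel for $\partial_x - 2w$ propagates from the right, but the Dirichlet condition for $\phi$ is imposed at $x=1$, so a careless backward integration could lose the gain. The cleanest workaround is to exploit the local character of the expansion in the Borg--Marchenko spirit: only the values of $q$ in an arbitrarily small right-neighborhood $[0,\varepsilon]$ of $0$ affect the asymptotic series, so one replaces the problem on $[0,1]$ by an auxiliary problem on $[0,\varepsilon]$ and absorbs the change of right boundary condition into an $O(e^{-2w\varepsilon})$ error, which is beaten by every negative power of $w$. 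Once this reduction is in hand, the fixed-point estimate for $r_A$ becomes routine and the asymptotic expansion follows to any prescribed order $A$.
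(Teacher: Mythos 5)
The paper gives no proof of this theorem; it is quoted from Simon~(1999), whose own argument goes through the $A$-function representation $m(-\kappa^2)=-\kappa-\int_0^a A(\alpha)e^{-2\alpha\kappa}\,d\alpha+O(e^{-(2a-\varepsilon)\kappa})$ followed by Watson's lemma on the Taylor expansion of $A$ at $\alpha=0$, so your Riccati route is genuinely different (essentially Atkinson's method). The setup $\phi'=\phi^2-q-w^2$ with $\phi(0,w)=-M(w^2)$, and the formal matching of powers of $w$, are sound ideas in principle, but the proposal has a real gap in the error-control step and a smaller bookkeeping problem in the matching itself.

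On the error control: the $[0,\varepsilon]$ truncation does not circumvent the difficulty you identify. The auxiliary Dirichlet problem on $[0,\varepsilon]$ again has the singular behaviour $\phi(\varepsilon^-)=+\infty$, so the question of where to anchor the Volterra integration is merely relocated, not answered. What the fixed-point argument actually needs is an a~priori bound $\phi(x,w)=w\bigl(1+o(1)\bigr)$ holding \emph{uniformly in $x$} on some $[0,b]$ with $b<1$; with it, the kernel $e^{-\int_x^t(\phi+\Phi_A)}$ really decays like $e^{-cw(t-x)}$, the boundary term $r_A(b)e^{-\int_x^b(\phi+\Phi_A)}$ is exponentially small because $r_A(b)=O(w)$, and the source term gives $r_A(0)=O(w^{-A-1})$ as required. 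That uniform bound follows from the Volterra estimates on $s_1(x,z)$ of the same type as those behind Proposition~\ref{estim}, but evaluated at general interior $x$; your only appeal, to $M=-D/\Delta$ and Proposition~\ref{estim}, controls $\phi$ at $x=0$, which is the one place the bound is not needed. Separately, if you actually carry out the matching for $\phi^2-\phi'=q+w^2$ you obtain $\beta_{j+1}=\tfrac12\beta_j'-\tfrac12\sum_{\ell=0}^{j-1}\beta_\ell\beta_{j-1-\ell}$, which for constant $q=q_0$ reproduces $-m=\sqrt{\kappa^2+q_0}=\kappa+\tfrac{q_0}{2\kappa}-\tfrac{q_0^2}{8\kappa^3}+\cdots$ (so $\beta_1=0$), whereas the recursion as printed in the statement would give $\beta_1=q_0^2/8\neq 0$; the claim that the algebra ``produces the polynomial expressions stated in the theorem'' is not something you can assert without checking it.
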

	
	\medskip
	
	\noindent Of course, by symmetry, one has immediately:
	
	\medskip
	
	\begin{corollary}
		\label{CorSimon}
		$N(z^2)$ has the following asymptotic expansion :
		\begin{equation*}
		\forall A\in\mathbb{N},\: \:-N(z^2)\underset{_{\substack{z \to \infty}}}{=}z+\sum_{j=0}^{A}\frac{\gamma_j(0)}{z^{j+1}}+o\bigg(\frac{1}{z^{A+1}}\bigg)
		\end{equation*}
		\noindent where, for all $x\in[0,1]$, $\gamma_j(x)$ is defined by : $\left\{\begin{aligned}
		&\gamma_0(x)=\frac{1}{2}q(1-x)\\
		&\gamma_{j+1}(x)=\frac{1}{2}\gamma'_j(x)+\frac{1}{2}\sum_{l=0}^{j}\gamma_l(x)\gamma_{j-l}(x).
		\end{aligned}\right.$
	\end{corollary}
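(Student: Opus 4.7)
The strategy is to reduce the statement to Theorem \ref{Simon} by exploiting the left-right symmetry of the interval $[0,1]$. Concretely, I would apply the change of variable $y=1-x$ to the differential equation (\ref{eqdif}). If $u$ solves $-u''+qu=-zu$ on $[0,1]$, then $\tilde u(y):=u(1-y)$ solves $-\tilde u''+\tilde q \tilde u=-z\tilde u$ on $[0,1]$ with the reflected potential $\tilde q(y):=q(1-y)$. Under this change the endpoints $x=0$ and $x=1$ are interchanged, so the Weyl-Titchmarsh function at $y=0$ for the equation with potential $\tilde q$ should play the role of the function at $x=1$ for the original equation with potential $q$.

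To make this precise, I would first compute how the fundamental systems transform. The functions $\tilde c_0(y):=c_1(1-y)$ and $\tilde s_0(y):=-s_1(1-y)$ solve the new equation and, thanks to the sign in $\tilde s_0$, satisfy the normalization $\tilde c_0(0)=1,\ \tilde c_0'(0)=0,\ \tilde s_0(0)=0,\ \tilde s_0'(0)=1$ required of fundamental solutions at the left endpoint. The associated Weyl-Titchmarsh function $\tilde M(z)$, characterized by the condition that $\tilde c_0+\tilde M(z)\tilde s_0$ vanishes at $y=1$, is then computed from the identities $\tilde c_0(1)=c_1(0)$ and $\tilde s_0(1)=-s_1(0)$; this yields $\tilde M(z)=c_1(0)/s_1(0)=N(z)$.

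Once this identification is established, Theorem \ref{Simon} applied to the equation with potential $\tilde q$ yields, for every $A\in\mathbb{N}$,
\begin{equation*}
-\tilde M(z^2)\underset{z\to\infty}{=}z+\sum_{j=0}^{A}\frac{\tilde\beta_j(0)}{z^{j+1}}+o\!\left(\frac{1}{z^{A+1}}\right),
\end{equation*}
where $\tilde\beta_j$ is defined by the same recursion as $\beta_j$ but with $\tilde q$ in place of $q$. Since $\tilde q(x)=q(1-x)$, one has $\tilde\beta_0(x)=\tfrac12 q(1-x)=\gamma_0(x)$, and the recursion propagates the identity $\tilde\beta_j=\gamma_j$ for every $j$. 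Substituting $\tilde M(z^2)=N(z^2)$ and $\tilde\beta_j(0)=\gamma_j(0)$ gives exactly the claimed expansion for $-N(z^2)$.

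The only delicate point is bookkeeping of the signs when transferring the fundamental systems under the involution $x\mapsto 1-x$; in particular, the minus sign in $\tilde s_0(y)=-s_1(1-y)$ is what guarantees that $\tilde s_0'(0)=+1$ and hence that the definition of $\tilde M$ really corresponds to the canonical Weyl function of the reflected equation. Apart from this verification, the corollary is a direct consequence of Theorem \ref{Simon}.
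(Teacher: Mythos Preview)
Your proof is correct and follows exactly the approach the paper indicates: the paper simply writes ``Of course, by symmetry, one has immediately'' before stating the corollary, and your argument spells out precisely that symmetry (the involution $x\mapsto 1-x$, the induced identification $\tilde M=N$, and the transfer of Theorem~\ref{Simon} to the reflected potential $\tilde q(x)=q(1-x)$). Your sign bookkeeping for $\tilde s_0(y)=-s_1(1-y)$ and the resulting computation $\tilde M(z)=c_1(0)/s_1(0)=N(z)$ are accurate.
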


\noindent
We deduce from Theorem \ref{Simon} and Corollary \ref{CorSimon}:
	
	\medskip
	
\begin{equation*}
-\frac{M(\mu_m)}{\sqrt{f(0)}}+\frac{N(\mu_m)}{\sqrt{f(1)}}=\underbrace{\bigg(\frac{1}{\sqrt{f(0)}}-\frac{1}{\sqrt{f(1)}}\bigg)}_{> 0}\sqrt{\mu_m}+O\bigg(\frac{1}{\sqrt{\mu_m}}\bigg).
\end{equation*}

\noindent
Thus, recalling that
\begin{equation*}
\begin{aligned}
\delta &=\bigg(-\frac{M(\mu_m)}{\sqrt{f(0)}}+C_0+\frac{N(\mu_m)}{\sqrt{f(1)}}+C_1\bigg)^2+O(\mu_m e^{-2\sqrt{\mu_m}}).
\end{aligned}
\end{equation*}

\noindent we obtain:

\begin{equation*}
\begin{aligned}
\sqrt{\delta}&=\bigg(\frac{N(\mu_m)}{\sqrt{f(1)}}-\frac{M(\mu_m)}{\sqrt{f(0)}}+C_0+C_1\bigg)\sqrt{1+O\bigg(e^{-2\sqrt{\mu_m}}\bigg)}\\
&=\frac{N(\mu_m)}{\sqrt{f(1)}}-\frac{M(\mu_m)}{\sqrt{f(0)}}+C_0+C_1+O\big(\sqrt{\mu_m}e^{-2\sqrt{\mu_m}}\big).
\end{aligned}
\end{equation*}

\medskip

\noindent Hence :
	
\begin{equation*}
\left\{
\begin{aligned}
&\lambda^-(\mu_m)=\frac{1}{2}\bigg[\bigg(-\frac{M(\mu_m)}{\sqrt{f(0)}}-\frac{N(\mu_m)}{\sqrt{f(1)}}+C_0-C_1\bigg)-\sqrt{\delta}\bigg] \\
&\lambda^+(\mu_m)=\frac{1}{2}\bigg[\bigg(-\frac{M(\mu_m)}{\sqrt{f(0)}}-\frac{N(\mu_m)}{\sqrt{f(1)}}+C_0-C_1\bigg)+\sqrt{\delta}\bigg],
\end{aligned}\right.
\end{equation*}
	
	\medskip
	
	\noindent and therefore, substituting $C_1$ and $C_2$ by their values and $M(\mu_m)$ and $N(\mu_m)$ by their asymptotics, we get :
	
	\begin{equation*}
	\left\{
	\begin{aligned}
	&\lambda^-(\mu_m)=\frac{\sqrt{\mu_m}}{\sqrt{f(1)}}-\frac{\ln(h)'(1)}{4\sqrt{f(1)}}+O\bigg(\frac{1}{\sqrt{\mu_m}}\bigg)	 \\
	&\lambda^+(\mu_m)=\frac{\sqrt{\mu_m}}{\sqrt{f(0)}}+\frac{\ln(h)'(0)}{4\sqrt{f(0)}}+O\bigg(\frac{1}{\sqrt{\mu_m}}\bigg).
	\end{aligned}\right.
	\end{equation*}
	
	\medskip
	
	\begin{enumerate}[label=\alph*), align=left, leftmargin=*, noitemsep]
	\item[$\bullet$] Assume now $f(0)=f(1)$. In this case, the restricted DN map
			\end{enumerate}
	
	\begin{equation*}
	\Lambda_g^m(\lambda)=\begin{pmatrix}
	-\frac{M(\mu_m)}{\sqrt{f(0)}}+\frac{\ln(h)'(0)}{4\sqrt{f(0)}}&-\frac{1}{\sqrt{f(0)}}\frac{1}{\Delta(\mu_m)}\\
	-\frac{1}{\sqrt{f(0)}}\frac{1}{\Delta(\mu_m)}&-\frac{N(\mu_m)}{\sqrt{f(1)}}-\frac{\ln(h)'(1)}{4\sqrt{f(1)}}
	\end{pmatrix}
	\end{equation*}

$\quad$

\noindent is a symmetric matrix and we can use the well-known result:
		
		\medskip
			
\begin{lemma}
	\label{qm}
Let $H$ be a Hilbert space, $A\in\mathcal{L}(H)$ be a selfadjoint operator. Let $\epsilon>0$. Assume there exists $\lambda_0\in\mathbb{R}$ and $u_0\in H$ a unit vector such that $\|(A-\lambda_0 Id)u_0\|\le\epsilon$. Then there exists an element $\lambda$ in the spectrum of $A$ such that $|\lambda-\lambda_0|\le\epsilon$.
\end{lemma}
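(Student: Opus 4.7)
The plan is to argue by contradiction, using the fact that for a bounded self-adjoint operator the norm of the resolvent equals the reciprocal of the distance to the spectrum. Suppose, for contradiction, that $\mathrm{dist}(\lambda_0, \sigma(A)) > \epsilon$. Then $\lambda_0$ lies in the resolvent set of $A$, so $R(\lambda_0) := (A - \lambda_0 \,\mathrm{Id})^{-1}$ is a bounded operator on $H$.

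Since $A$ is self-adjoint, the spectral theorem gives a projection-valued measure $E$ such that $A = \int_{\sigma(A)} \lambda \, dE_\lambda$. Applying the Borel functional calculus to the bounded continuous function $\lambda \mapsto (\lambda - \lambda_0)^{-1}$ on $\sigma(A)$, one obtains $R(\lambda_0) = \int_{\sigma(A)} (\lambda - \lambda_0)^{-1}\, dE_\lambda$, together with the identity
\[
\|R(\lambda_0)\| \;=\; \sup_{\lambda \in \sigma(A)} \frac{1}{|\lambda - \lambda_0|} \;=\; \frac{1}{\mathrm{dist}(\lambda_0,\sigma(A))}.
\]
Setting $v_0 := (A - \lambda_0\,\mathrm{Id})u_0$, the hypothesis gives $\|v_0\| \le \epsilon$, while $u_0 = R(\lambda_0)v_0$. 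Taking norms therefore yields
\[
1 \;=\; \|u_0\| \;\le\; \|R(\lambda_0)\|\,\|v_0\| \;\le\; \frac{\epsilon}{\mathrm{dist}(\lambda_0,\sigma(A))} \;<\; 1,
\]
a contradiction. Hence $\mathrm{dist}(\lambda_0, \sigma(A)) \le \epsilon$, and since $\sigma(A)$ is a closed subset of $\mathbb{R}$ the infimum is attained at some $\lambda \in \sigma(A)$ with $|\lambda - \lambda_0| \le \epsilon$.

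An essentially equivalent route, avoiding the contradiction, is to argue directly with the scalar spectral measure $\mu_{u_0}(\cdot) := \langle E(\cdot)u_0, u_0\rangle$. This is a probability measure (because $\|u_0\|=1$) supported in $\sigma(A)$, and the spectral theorem gives
\[
\epsilon^2 \;\ge\; \|(A - \lambda_0\,\mathrm{Id})u_0\|^2 \;=\; \int_{\sigma(A)} (\lambda - \lambda_0)^2 \, d\mu_{u_0}(\lambda) \;\ge\; \mathrm{dist}(\lambda_0,\sigma(A))^2,
\]
from which the same conclusion follows. There is no genuine obstacle in the proof: it is a classical quasi-mode lemma whose content is exactly the spectral-calculus identity for the resolvent norm, and it is used here merely as a black box to connect approximate eigenpairs of the symmetric matrix $\Lambda_g^m(\lambda)$ to its actual eigenvalues in the case $f(0) = f(1)$.
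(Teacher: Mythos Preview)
Your argument is correct: both the resolvent-norm identity $\|(A-\lambda_0\,\mathrm{Id})^{-1}\|=\mathrm{dist}(\lambda_0,\sigma(A))^{-1}$ and the direct estimate via the scalar spectral measure are standard and each yields the claim cleanly. Note that the paper does not actually prove this lemma; it is stated as a ``well-known result'' and used as a black box, so there is no paper proof to compare against.
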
	
\noindent We apply this theorem with $A=\Lambda_g^m(\lambda)$, $\displaystyle\lambda_0=-\frac{M(\mu_m)}{\sqrt{f(0)}}+\frac{\ln(h)'(0)}{4\sqrt{f(0)}}$ and $U_0=\begin{pmatrix}
	1\\0
\end{pmatrix}$.

\medskip

\noindent We have :
\begin{equation*}
\displaystyle A-\lambda_0I_2=\begin{pmatrix}
0&&-\frac{1}{\sqrt{f(0)}}\frac{1}{\Delta(\mu_m)}\\
-\frac{1}{\sqrt{f(0)}}\frac{1}{\Delta(\mu_m)}&&-\frac{N(\mu_m)}{\sqrt{f(1)}}-\frac{\ln(h)'(1)}{4\sqrt{f(1)}}-\lambda_0
\end{pmatrix}
\end{equation*}

\medskip

\noindent Hence :
\begin{equation*}
(A-\lambda_0 I_2)U_0=-\frac{1}{\sqrt{f(0)}}\begin{pmatrix}
0\\ \frac{1}{\Delta(\mu_m)}
\end{pmatrix}=\displaystyle O(\sqrt{\mu_m}e^{-\sqrt{\mu_m}}).
\end{equation*}

\medskip

\noindent Lemma \ref{qm} gives $\lambda^+_m\in\sigma(\Lambda_g^m(\lambda))$ such that : \begin{equation*}
\begin{aligned}
\displaystyle \lambda^+_m&=-\frac{M(\mu_m)}{\sqrt{f(0)}}+\frac{\ln(h)'(0)}{4\sqrt{f(0)}}+O(\sqrt{\mu_m}e^{-\sqrt{\mu_m}})\\
&=\frac{\sqrt{\mu_m}}{\sqrt{f(0)}}+\frac{\ln(h)'(0)}{4\sqrt{f(0)}}+O\bigg(\frac{1}{\sqrt{\mu_m}}\bigg)
\end{aligned}
\end{equation*}

\medskip

\noindent from Theorem \ref{Simon}. The second eigenvalue $\lambda_m^2$ (which is distinct from $\lambda^1_m$ otherwhise $\Lambda_g^m(\lambda)$ would be a homothety) can be deduced from the first one :
\begin{equation*}
\begin{aligned}
\lambda_m^-&={\rm{Tr}}\big(\Lambda_g^m(\lambda)\big)-\lambda_m^+\\
&=-\frac{N(\mu_m)}{\sqrt{f(1)}}-\frac{\ln(h)'(1)}{4\sqrt{f(1)}}+O(\sqrt{\mu_m}e^{-\sqrt{\mu_m}})\\
&=\frac{\sqrt{\mu_m}}{\sqrt{f(1)}}-\frac{\ln(h)'(1)}{4\sqrt{f(1)}}+O\bigg(\frac{1}{\sqrt{\mu_m}}\bigg).
\end{aligned}
\end{equation*}

\end{proof}

\medskip

\begin{lemma}
	Under the hypothesis of Proposition \ref{trdet}, we have the following alternative :
	\begin{center}
	$\left\{
	\begin{aligned}
	&f(0)=\tilde{f}(0)	 \\
	&f(1)=\tilde{f}(1)
	\end{aligned}\right.\:\:$  or  $\:\:\left\{\begin{aligned}
	&f(0)=\tilde{f}(1)	 \\
	&f(1)=\tilde{f}(0).
	\end{aligned}\right.$
	\end{center}
\end{lemma}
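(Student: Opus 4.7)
The plan is to exploit the asymptotic expansions of $\lambda^\pm(\mu_m)$ provided by Lemma \ref{vp}, together with the hypothesis that the trace and determinant of $\Lambda_g^m(\lambda)$ and $\Lambda_{\tilde g}^m(\lambda)$ agree (at least asymptotically, cf.\ Remark \ref{Remarque_importante_1}). The point is that $\mathrm{Tr}\,\Lambda_g^m(\lambda)=\lambda^-(\mu_m)+\lambda^+(\mu_m)$ and $\det\Lambda_g^m(\lambda)=\lambda^-(\mu_m)\lambda^+(\mu_m)$, and similarly for $\tilde g$, so these are the symmetric functions of the pair $\{\lambda^-(\mu_m),\lambda^+(\mu_m)\}$.

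First, I would substitute the asymptotics from Lemma \ref{vp}. This gives
\begin{equation*}
\mathrm{Tr}\,\Lambda_g^m(\lambda)=\Bigl(\tfrac{1}{\sqrt{f(0)}}+\tfrac{1}{\sqrt{f(1)}}\Bigr)\sqrt{\mu_m}+O(1),\qquad \det\Lambda_g^m(\lambda)=\tfrac{\mu_m}{\sqrt{f(0)f(1)}}+O(\sqrt{\mu_m}),
\end{equation*}
and analogous formulas for $\tilde g$. Dividing the trace identity by $\sqrt{\mu_m}$ and the determinant identity by $\mu_m$, and letting $m\to\infty$ (using that $\mu_m\to\infty$ by Weyl's law), I obtain
\begin{equation*}
\tfrac{1}{\sqrt{f(0)}}+\tfrac{1}{\sqrt{f(1)}}=\tfrac{1}{\sqrt{\tilde f(0)}}+\tfrac{1}{\sqrt{\tilde f(1)}},\qquad \tfrac{1}{\sqrt{f(0)f(1)}}=\tfrac{1}{\sqrt{\tilde f(0)\tilde f(1)}}.
\end{equation*}

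Setting $a=1/\sqrt{f(0)}$, $b=1/\sqrt{f(1)}$, $\tilde a=1/\sqrt{\tilde f(0)}$, $\tilde b=1/\sqrt{\tilde f(1)}$ (all positive), these two identities say that $a+b=\tilde a+\tilde b$ and $ab=\tilde a\tilde b$, i.e.\ the pairs $(a,b)$ and $(\tilde a,\tilde b)$ are roots of the same monic quadratic polynomial $X^2-(a+b)X+ab$. Hence $\{a,b\}=\{\tilde a,\tilde b\}$ as unordered pairs, which translates to $\{f(0),f(1)\}=\{\tilde f(0),\tilde f(1)\}$. This yields exactly the two alternatives in the statement.

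I expect no serious obstacle here: the only subtle point is that the leading asymptotic of $\det\Lambda_g^m(\lambda)$ uses the fact that the off-diagonal entries of $\Lambda_g^m(\lambda)$ decay exponentially in $\sqrt{\mu_m}$ (because $\Delta(\mu_m)^{-1}=O(\sqrt{\mu_m}\,e^{-\sqrt{\mu_m}})$ by Proposition \ref{estim}), so the determinant is governed asymptotically by the product of the diagonal entries, which is what made Lemma \ref{vp} work in the first place. Everything else is just a matching of the two leading coefficients of the trace and determinant via the elementary fact that two reals are determined by their sum and product.
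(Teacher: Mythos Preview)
Your proposal is correct and follows essentially the same approach as the paper: extract the leading coefficients of the trace and determinant asymptotics (via Lemma \ref{vp}) to obtain the sum and product of $1/\sqrt{f(0)}$ and $1/\sqrt{f(1)}$, then conclude by the elementary fact that these determine the unordered pair. The paper's proof is slightly terser in the final step, but the argument is the same.
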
	

\medskip

\begin{proof}
We begin with:
\begin{equation*}
\frac{	{\rm{Tr}}(\Lambda_g^m(\lambda))}{\sqrt{\mu_m}}=	\frac{{\rm{Tr}}(\Lambda_{\tilde{g}}^m(\lambda))}{\sqrt{\mu_m}},\quad\forall m\in\mathbb{N}.
\end{equation*}
Thus, it follows from Lemma \ref{vp} that
\begin{equation*}
	\frac{1}{\sqrt{f(0)}}+\frac{1}{\sqrt{f(1)}}=\frac{1}{\sqrt{\tilde{f}(0)}}+\frac{1}{\sqrt{\tilde{f}(1)}}.
\end{equation*}

\noindent
In the same way, thanks to the relations:
\begin{equation*}
\frac{{\rm{det}}(\Lambda_g^m(\lambda))}{\mu_m}=	\frac{{\rm{det}}(\Lambda_{\tilde{g}}^m(\lambda))}{\mu_m}\quad\forall m\in\mathbb{N},
\end{equation*}
we get :
\begin{equation*}
	\frac{1}{\sqrt{f(0)f(1)}}=\frac{1}{\sqrt{\tilde{f}(0)\tilde{f}(1)}},
\end{equation*}
and the proof is complete.
\end{proof}

$\quad$

\noindent {\bf \underline{Case 1}} :  $f(0)=\tilde{f}(0)$ et $ f(1)=\tilde{f}(1)$.

$\quad$

\begin{lemma} Under the hypotheses of Proposition \ref{trdet}, we have:
\begin{equation}\label{Delta}
\Delta(z)=\tilde{\Delta}(z), \ \forall z\in\mathbb{C}.
\end{equation}
\end{lemma}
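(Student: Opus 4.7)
The plan is to exploit Proposition \ref{Mat} to recast the hypotheses on trace and determinant as analytic identities between the Weyl-Titchmarsh functions $M,N$ and the characteristic function $\Delta$, use Simon's asymptotic expansion (Theorem \ref{Simon} and Corollary \ref{CorSimon}) to pin down the remaining boundary data, and then apply a Hadamard-type density argument to promote equality at the discrete set $\{\mu_m\}$ to equality as entire functions, with Proposition \ref{Prodinf} closing the argument.

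Writing $A := h'(0)/(4h(0))$ and $A' := h'(1)/(4h(1))$ (and $\tilde A,\tilde A'$ analogously), and noting that $h(0)=\tilde h(0),\,h(1)=\tilde h(1)$ in Case 1, it remains to compare these constants. From Proposition \ref{Mat}, the trace equality reads
\[\sqrt{f(1)}\,(M-\tilde M)(\mu_m) + \sqrt{f(0)}\,(N-\tilde N)(\mu_m) = \sqrt{f(1)}(A-\tilde A)-\sqrt{f(0)}(A'-\tilde A'),\]
and Simon's expansion forces $(M-\tilde M)(\mu_m),(N-\tilde N)(\mu_m)\to 0$ as $m\to\infty$, so the right-hand side constant must vanish: a first linear relation between $A-\tilde A$ and $A'-\tilde A'$. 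A parallel analysis of $\mathrm{Tr}^2-4\det = (a_{11}-a_{22})^2 + 4/(\sqrt{f(0)f(1)}\,\Delta^2)$, whose last term is exponentially small on the positive real axis (Proposition \ref{estim}), matches the $\sqrt z$ coefficient of the asymptotic expansion of the left-hand side and yields a second independent relation. In the subcase $f(0)\neq f(1)$ these two relations force $A=\tilde A$ and $A'=\tilde A'$; the subcase $f(0)=f(1)$ is handled by the symmetric structure of $\Lambda_g^m$ together with Lemma \ref{qm}, exactly as in the proof of Lemma \ref{vp}.

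With those boundary identities, the trace and determinant equalities at $\mu_m$ simplify respectively to
\[ \sqrt{f(1)}\,(M-\tilde M) + \sqrt{f(0)}\,(N-\tilde N) = 0, \qquad (M-\tilde M)(\tilde N+A') + (N-\tilde N)(M-A) = \Delta^{-2}-\tilde\Delta^{-2}. \]
Substituting $M=-D/\Delta$ and $N=-E/\Delta$, the first relation multiplied by $\Delta\tilde\Delta$ and the second multiplied by $\Delta^2\tilde\Delta^2$ become equations between entire functions of order at most $1/2$ (Proposition \ref{estim}) which vanish at every $\mu_m$.

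The main obstacle is to promote these vanishing properties on $\{\mu_m\}$ to identical vanishing on $\mathbb{C}$. Weyl's law on the closed $(n-1)$-manifold $K$ gives $\mu_m\sim c\,m^{2/(n-1)}$, so $\{\mu_m\}$ has convergence exponent $(n-1)/2$; for $n\geq 3$ this strictly exceeds the order $1/2$ of the entire functions in question, and Hadamard's factorisation theorem forces them to vanish identically, while the case $n=2$ relies on the extra hypothesis $\lambda\neq 0$ of Theorem \ref{theorem1} to restore the required Müntz-type density. The two resulting global identities then extend our trace and determinant relations to all of $\mathbb{C}$ as meromorphic identities, from which a further analytic manipulation forces $M\equiv\tilde M$ on $\mathbb{C}$. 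Consequently the (simple) pole sets of $M$ and $\tilde M$ coincide, so $\Delta$ and $\tilde\Delta$ share the same zeros; Proposition \ref{Prodinf} expresses each as an infinite product with these zeros up to a multiplicative constant, and the common leading asymptotic $\Delta(z)\sim\sinh(\sqrt z)/\sqrt z$ (Proposition \ref{estim}) pins down the constant to $1$, giving $\Delta=\tilde\Delta$.
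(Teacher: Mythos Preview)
Your overall architecture mirrors the paper's, but the decisive step is missing. The claim that ``a further analytic manipulation forces $M\equiv\tilde M$'' is both unjustified and, in this order, unachievable: before you know $\Delta=\tilde\Delta$, the determinant identity still carries the term $\Delta^{-2}-\tilde\Delta^{-2}$, so the two globalized relations do \emph{not} decouple into $M=\tilde M$ and $N=\tilde N$. In fact the paper shows later that even after $\Delta=\tilde\Delta$ is established one only gets the alternative $M=\tilde M$ \emph{or} $M=\tilde N$ (the latter occurring when $f(0)=f(1)$), so your stronger claim cannot hold in general. The paper's actual mechanism is different and direct: once the trace and determinant identities are promoted to entire identities (equations (\ref{trace}) and (\ref{det}) with $\mu_m$ replaced by $z$), one \emph{evaluates them at a zero $\alpha_k$ of $\Delta$}. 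This collapses the system to
\[
\Bigl(\tfrac{D(\alpha_k)}{\sqrt{f(0)}}+\tfrac{E(\alpha_k)}{\sqrt{f(1)}}\Bigr)\tilde\Delta(\alpha_k)=0,\qquad
\Bigl(\tfrac{D(\alpha_k)E(\alpha_k)-1}{\sqrt{f(0)f(1)}}\Bigr)\tilde\Delta(\alpha_k)^2=0,
\]
and a short case analysis (using that $D(\alpha_k),E(\alpha_k)\in\mathbb{R}$, so $D(\alpha_k)^2/f(0)+1/\sqrt{f(0)f(1)}>0$) forces $\tilde\Delta(\alpha_k)=0$. Symmetry, Hadamard and the common asymptotic then give $\Delta=\tilde\Delta$. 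This zero-by-zero argument is the missing idea in your proposal.

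Two secondary points. First, your density step in dimension $n=2$ is not repaired by the hypothesis $\lambda\neq 0$; that hypothesis plays no role here. The paper handles all $n\ge 2$ uniformly by working in the Nevanlinna class $\mathcal N(\Pi^+)$ and invoking Theorem~\ref{nev} with the set $\{\sqrt{\mu_{(mT)^{n-1}}}\}$, which satisfies the M\"untz condition $\sum 1/\mu=\infty$ by Weyl's law. Your order/convergence-exponent argument is fine for $n\ge 3$ but genuinely fails at $n=2$. Second, establishing $A=\tilde A$ and $A'=\tilde A'$ separately is neither needed for this lemma nor cleanly available when $f(0)=f(1)$ (your appeal to Lemma~\ref{qm} does not resolve the sign ambiguity in $C_0+C_1=\pm(\tilde C_0+\tilde C_1)$); the paper only uses the single combined relation (\ref{sommeconst}).
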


\begin{proof} We write the equality \rm{Tr}$(\Lambda_g^m(\lambda))$=Tr$(\Lambda^m_{\tilde{g}}(\lambda))$ as follows : for all $\mu_m\in\sigma(-\Delta_{g_K})$,

\begin{equation*}
\begin{aligned}
\bigg(-\frac{M(\mu_m)}{\sqrt{f(0)}}+\frac{\ln(h)'(0)}{4\sqrt{f(0)}}\bigg)+\bigg(-\frac{N(\mu_m)}{\sqrt{f(1)}}-\frac{\ln(h)'(1)}{4\sqrt{f(1)}}\bigg)=
\bigg(&-\frac{\tilde{M}(\mu_m)}{\sqrt{f(0)}}+\frac{\ln(\tilde{h})'(0)}{4\sqrt{f(0)}}\bigg)\\
&+\bigg(-\frac{\tilde{N}(\mu_m)}{\sqrt{f(1)}}-\frac{\ln(\tilde{h})'(1)}{4\sqrt{f(1)}}\bigg).
\end{aligned}
\end{equation*}

\vspace{0.2cm}\noindent
Thus, using Theorem \ref{Simon}, one gets when $m \rightarrow +\infty$,
\begin{equation}\label{sommeconst}
	\frac{\ln(h)'(0)}{4\sqrt{f(0)}}-\frac{\ln(h)'(1)}{4\sqrt{f(1)}}=\frac{\ln(\tilde{h})'(0)}{4\sqrt{f(0)}}-\frac{\ln(\tilde{h})'(1)}{4\sqrt{f(1)}}.
\end{equation}
It follows that
\begin{equation*}
	\frac{M(\mu_m)}{\sqrt{f(0)}}+\frac{N(\mu_m)}{\sqrt{f(1)}}=\frac{\tilde{M}(\mu_m)}{\sqrt{f(0)}}+\frac{\tilde{N}(\mu_m)}{\sqrt{f(1)}},
\end{equation*}
or equivalently
\begin{equation}\label{trace}
\tilde{\Delta}(\mu_m)\bigg(\frac{D(\mu_m)}{\sqrt{f(0)}}+\frac{E(\mu_m)}{\sqrt{f(1)}}\bigg)=\Delta(\mu_m)\bigg(\frac{\tilde{D}(\mu_m)}{\sqrt{f(0)}}+\frac{\tilde{E}(\mu_m)}{\sqrt{f(1)}}\bigg).
\end{equation}

\medskip

\noindent In the same way, using \rm{det}$(\Lambda_g^m(\lambda))$=\rm{det}$(\Lambda^m_{\tilde{g}}(\lambda))$, we have:
\begin{equation*}
\begin{aligned}
\bigg(\frac{M(\mu_m)}{\sqrt{f(0)}}-&\frac{\ln'(h)(0)}{4\sqrt{f(0)}}\bigg)\bigg(\frac{N(\mu_m)}{\sqrt{f(1)}}+\frac{\ln'(h)(1)}{4\sqrt{f(1)}}\bigg)-\frac{1}{\sqrt{f(0)f(1)}\Delta^2(\mu_m)}\\
&=\bigg(\frac{\tilde{M}(\mu_m)}{\sqrt{f(0)}}-\frac{\ln(\tilde{h})'(0)}{4\sqrt{f(0)}}\bigg)\times\bigg(\frac{\tilde{N}(\mu_m)}{\sqrt{f(1)}}+\frac{\ln'(\tilde{h})(1)}{4\sqrt{f(1)}}\bigg)-\frac{1}{\sqrt{f(0)f(1)}\tilde{\Delta}^2(\mu_m)}.
\end{aligned}
\end{equation*}

\medskip

\noindent Multiplying on both sides by $\Delta^2(\mu_m)\tilde{\Delta}^2(\mu_m)$, we obtain:

\begin{equation}\label{det}
\begin{aligned}
&\tilde{\Delta}^2(\mu_m)\bigg[\bigg(-\frac{D(\mu_m)}{\sqrt{f(0)}}-\Delta(\mu_m)\frac{\ln'(h)(0)}{4\sqrt{f(0)}}\bigg)\bigg(-\frac{E(\mu_m)}{\sqrt{f(1)}}
+\Delta(\mu_m)\frac{\ln'(h)(1)}{4\sqrt{f(1)}}\bigg)-\frac{1}{\sqrt{f(0)f(1)}}\bigg]\\
&=\Delta^2(\mu_m)\bigg[\bigg(-\frac{\tilde{D}(\mu_m)}{\sqrt{f(0)}}-\tilde{\Delta}(\mu_m)
\frac{\ln(\tilde{h})'(0)}{4\sqrt{f(0)}}\bigg)\bigg(-\frac{\tilde{E}(\mu_m)}{\sqrt{f(1)}}+\tilde{\Delta}(\mu_m)\frac{\ln'(\tilde{h})(1)}{4\sqrt{f(1)}}\bigg)-\frac{1}{\sqrt{f(0)f(1)}}\bigg]
\end{aligned}
\end{equation}

\vspace{0.2cm}
\noindent
Now, let us show that the equalities (\ref{trace}) and (\ref{det}) can be analytically extended with respect to $\mu_m$ in the half-right complex plane.
First, let us recall the definition of the so-called Nevanlinna class:

\medskip

\begin{definition}
	Set $\Pi^+=\{z\in\mathbb{C},\:\Re(z)>0\}$ the half-right plane of the complex plane. The Nevanlinna class $\mathcal{N}(\Pi^+)$ is the set of analytic functions $f$ on $\Pi^+$ such that
	\begin{equation*}
	\underset{0<r<1}{\sup}\int_{-\pi}^{\pi}\ln^+\bigg|f\bigg(\frac{1-re^{i\theta}}{1+re^{i\theta}}\bigg)\bigg|d\theta <\infty,
	\end{equation*}
	with:
	\begin{equation*}
	\ln^+(x)=\left\{\begin{aligned}
	&\ln x\:\:{\rm{if}}\:\: \ln(x)\ge 0\\
	&0\:\:{\rm{if}}\:\: \ln(x)< 0.
	\end{aligned}\right.
	\end{equation*}
\end{definition}

\medskip

\noindent We have the following result \cite{ramm1999inverse}:

\medskip

\begin{proposition}
	Let $h\in H(\Pi^+)$ an analytic function on $\Pi^+$, $A$ and $C$ two constants. Assume :
	\begin{equation*}
	|h(z)|\le Ce^{A\Re(z)},\quad \forall z\in\Pi^+.
	\end{equation*}
	Then $h\in\mathcal{N}(\Pi^+)$.
\end{proposition}

\noindent
Thus, thanks to the asymptotics of Proposition \ref{estim}, the holomorphic functions defined by $\delta:z\mapsto\Delta(z^2)$, $d:z\mapsto D(z^2)$ and $e:z\mapsto E(z^2)$
belong to $\mathcal{N}(\Pi^+)$. Let us recall now a useful uniqueness theorem involving functions in the Nevanlinna class (see \cite{ramm1999inverse} for instance):	

\begin{theorem}\label{nev}
	Let $h\in\mathcal{N}(\Pi^+)$ and $\mathcal{L}\subset\mathbb{R}_+$ be a countable set such that
	
	\medskip
	
	\begin{center}
		$\displaystyle\sum_{\mu\in\mathcal{L}}\frac{1}{\mu}=+\infty$.
	\end{center} Then :
	\begin{equation*}
	\big(h(\mu)=0,\:\forall \mu\in\mathcal{L}\big)\Rightarrow h\equiv 0 {\:\:\rm{on}}\:\Pi^+.
	\end{equation*}
\end{theorem}

$\quad$

\noindent Now, by the Weyl law,  (cf \cite{safarov1997asymptotic}), we get:

\begin{equation*}
	\mu_m\underset{m\to +\infty}{=}c(K)m^{\frac{2}{n-1}}+O(1)
\end{equation*}
where $\displaystyle c(K)=\frac{(2\pi)^2}{\big(\omega_1\mathrm{vol}(K)\big)^{\frac{2}{n-1}}}$ and $\omega_1$ is the volume of the unit ball in $\mathbb{R}^{n-1}$. Thus, for a fixed $T\in\mathbb{N}$, we have :
\begin{equation*}
	\frac{\mu_{(mT)^{n-1}}}{c(K)T^2}=m^2+\frac{O(1)}{c(K)T^2}.
\end{equation*}

\noindent
As a consequence, for $m$ and $T$ large enough, the real numbers $\mu_{(mT)^{n-1}}$ are always distinct. Now, we set :
\begin{equation*}
	\mathcal{L}=\{\sqrt{\mu_{(mT)^{n-1}}},\: m\in\mathbb{N}\}.
\end{equation*}
Using $\displaystyle\sqrt{\mu_{(mT)^{n-1}}}\underset{m\to+\infty}{\sim} \sqrt{c(K)}Tm$, one has:

\begin{equation*}
	\sum_{\mu\in\mathcal{L}}\frac{1}{\mu}=+\infty.
\end{equation*}
Thus, thanks to Theorem \ref{nev}, the relations (\ref{trace}) and (\ref{det}) are still true if one replaces $\mu_m$ by $z^2 \in \mathbb{C}$, then by $z$.

$\quad$

\noindent
Now, let us prove that  $\Delta(z)=\tilde{\Delta}(z)$ for any $z\in\mathbb{C}$.
We recall that these functions are entire of order $\displaystyle \frac{1}{2}$ and their roots are simple. So, using Hadamard's factorization theorem (see Proposition \ref{Prodinf}), we deduce that these functions
are entirely described by their roots (up to a multiplicative constant). Consequently, in order to prove that $\Delta=\tilde{\Delta}$, it is enough to show that their roots are the same.

\par
 Set $\mathcal{P}=\{\alpha_j\in\mathbb{C},\: \Delta(\alpha_j)=0\}$. Let $\alpha_k$ be in $\mathcal{P}$ and let us show that $\tilde{\Delta}(\alpha_k)=0$. By definition, $-\alpha_k$ is an eigenvalue of the Sturm Liouville operator $\displaystyle H = -\frac{d^2}{dx^2}+q$ with Dirichlet
boundary conditions. Thus, from Proposition \ref{racDelt}, $\alpha_k$ is real and since the potential $q$ is real, the quantities $D(\alpha_k)$ and $E(\alpha_k)$ are also real. Using (\ref{trace})
and (\ref{det}) with $\mu_m$ replaced with $\alpha_k$, we obtain the following system:

\begin{equation*}
\left\{\begin{aligned}
&\bigg(\frac{D(\alpha_k)}{\sqrt{f(0)}}+\frac{E(\alpha_k)}{\sqrt{f(1)}}\bigg)\tilde{\Delta}(\alpha_k)=0\\
&\bigg(\frac{D(\alpha_k)E(\alpha_k)-1}{\sqrt{f(0)f(1)}}\bigg)\tilde{\Delta}(\alpha_k)^2=0.
\end{aligned}\right.
\end{equation*}
To finish the proof, we distinguish two cases:

\medskip

\begin{enumerate}[label=\alph*), align=left, leftmargin=*, noitemsep]
	\item[$\bullet$] If $\displaystyle \frac{D(\alpha_k)}{\sqrt{f(0)}} +\frac{E(\alpha_k)}{\sqrt{f(1)}}\ne 0$ then $\tilde{\Delta}(\alpha_k)=0$.
	\item[$\bullet$] Otherwise $\displaystyle\frac{D(\alpha_k)}{\sqrt{f(0)}}=-\frac{E(\alpha_k)}{\sqrt{f(1)}}$. Then, substituting in the second equality, we have:
	\begin{equation*}
	\bigg(\frac{D(\alpha_k)^2}{f(0)}+\frac{1}{\sqrt{f(0)f(1)}}\bigg)\tilde{\Delta}(\alpha_k)^2=0
	\end{equation*}
\end{enumerate}
Since $D(\alpha_k)\in\mathbb{R}$, we conclude that $\tilde{\Delta}(\alpha_k)=0$. By a standard symmetric argument, it follows $\Delta$ and $\tilde{\Delta}$ have the same zeros. So, there exists $C$ such that
$\Delta=C\tilde{\Delta}$. But,  as $\Delta(z)$ and $\tilde{\Delta}(z)$ have the same asymptotics :
\begin{equation*}
	\Delta(z),\tilde{\Delta}(z)\sim\frac{\sinh(\sqrt{z})}{\sqrt{z}},
\end{equation*}
we deduce that $C=1$ and the proof is complete.

\end{proof}

\medskip

\begin{remark}
	It is interesting to note that $\Delta(z)=\tilde{\Delta}(z)$ for all $z\in\mathbb{C}$ implies that (in fact is equivalent to) $q$ and $\tilde{q}$ are isospectral.
\end{remark}

$\quad$

\noindent As a consequence, we can simplify (\ref{det}) (with $\mu_m$ replaced by $z$), and we get on $\mathbb{C}$ :

\begin{equation*}
\left\{\begin{aligned}
&\bigg(\frac{M(z)}{\sqrt{f(0)}}-\frac{\ln'(h)(0)}{4\sqrt{f(0)}}\bigg)+\bigg(\frac{N(z)}{\sqrt{f(1)}}+
\frac{\ln'(h)(1)}{4\sqrt{f(1)}}\bigg)=\bigg(\frac{\tilde{M}(z)}{\sqrt{f(0)}}-\frac{\ln(\tilde{h})'(0)}{4\sqrt{f(0)}}\bigg)\\
&\qquad\qquad\qquad\qquad\qquad\qquad\qquad\qquad\qquad\qquad\qquad\qquad\qquad+
\bigg(\frac{\tilde{N}(z)}{\sqrt{f(1)}}+\frac{\ln'(\tilde{h})(1)}{4\sqrt{f(1)}}\bigg)\\
&\bigg(\frac{M(z)}{\sqrt{f(0)}}-\frac{\ln'(h)(0)}{4\sqrt{f(0)}}\bigg)\bigg(\frac{N(z)}{\sqrt{f(1)}}+\frac{\ln'(h)(1)}{4\sqrt{f(1)}}\bigg)=\bigg(\frac{\tilde{M}(z)}{\sqrt{f(0)}}-
\frac{\ln(\tilde{h})'(0)}{4\sqrt{f(0)}}\bigg)\\
&\qquad\qquad\qquad\qquad\qquad\qquad\qquad\qquad\qquad\qquad\qquad\qquad\qquad\times\bigg(\frac{\tilde{N}(z)}{\sqrt{f(1)}}+\frac{\ln'(\tilde{h})(1)}{4\sqrt{f(1)}}\bigg).
\end{aligned}\right.
\end{equation*}
Thus, for each $z\in\mathbb{C}$, we have the following alternative:

\begin{equation*}
\label{}
	\left\{\begin{aligned}
	&\frac{M(z)}{\sqrt{f(0)}}-\frac{\ln'(h)(0)}{4\sqrt{f(0)}}=\frac{\tilde{M}(z)}{\sqrt{f(0)}}-\frac{\ln(\tilde{h})'(0)}{4\sqrt{f(0)}}\\
	&\frac{N(z)}{\sqrt{f(1)}}+\frac{\ln'(h)(1)}{4\sqrt{f(1)}}=\frac{\tilde{N}(z)}{\sqrt{f(1)}}+\frac{\ln'(\tilde{h})(1)}{4\sqrt{f(1)}},
	\end{aligned}\right.
\end{equation*}
or \begin{equation*}
	\left\{\begin{aligned}
	&\frac{M(z)}{\sqrt{f(0)}}-\frac{\ln'(h)(0)}{4\sqrt{f(0)}}=\frac{\tilde{N}(z)}{\sqrt{f(1)}}+\frac{\ln'(\tilde{h})(1)}{4\sqrt{f(1)}}\\
	&\frac{N(z)}{\sqrt{f(1)}}+\frac{\ln'(h)(1)}{4\sqrt{f(1)}}=\frac{\tilde{M}(z)}{\sqrt{f(0)}}-\frac{\ln(\tilde{h})'(0)}{4\sqrt{f(0)}}.
	\end{aligned}\right.
\end{equation*}
$\quad$

\medskip

\noindent $\bullet\:$ As a first step, assume that the first system is satisfied for all $z$ in $\mathbb{C}$. Then, using the asymptotics of $M(z)$, (resp.  $\tilde{M}(z))$, we get:

\begin{equation*}
\displaystyle \frac{\ln'(h)(0)}{4\sqrt{f(0)}}=\frac{\ln(\tilde{h})'(0)}{4\sqrt{f(0)}}
\end{equation*}
and therefore :
\begin{equation*}
M(z)=\tilde{M}(z) \ {\rm{for \ all}}\:\:z \in \mathbb{C}\backslash \mathbb{R}.
\end{equation*}

\medskip

\noindent Thus, thanks to the Borg-Marchenko's theorem (see \cite{simon1999new} for instance), we deduce:

\begin{equation}
\label{egq}
q=\tilde{q}\quad{\rm{sur}}\:\:[0,1].
\end{equation}
Recall that :
\begin{equation*}
q=\frac{(h^{1/4})''}{h^{1/4}}-\lambda f
\end{equation*}
where we have set $h=f^{n-2}$ (respectively $\tilde{q}$ and $\tilde{h}=\tilde{f}^{n-2}$). In particular, in the $2$ dimensional case, we get immediateley $ f=\tilde{f}$ since $\lambda\ne 0$.
In dimension $n \geq 3$, we see that $f$ and $\tilde{f}$ same verify the same ODE with $f(0)=\tilde{f}(0)$. Moreover, the relation

\begin{equation*}
	\displaystyle \frac{\ln'(h)(0)}{4\sqrt{f(0)}}=\frac{\ln(\tilde{h})'(0)}{4\sqrt{\tilde{f}(0)}},
\end{equation*}
implies :
\begin{equation*}
	f'(0)=\tilde{f}'(0).
\end{equation*}
Thus, the Cauchy-Lipschitz's theorem says that $f=\tilde{f}.$.

$\quad$

\noindent $\bullet\:$ Secondly, assume there exists $z_0\in\mathbb{C}$ satisfying :

\begin{equation*}
\displaystyle \frac{M(z_0)}{\sqrt{f(0)}}-\frac{\ln'(h)(0)}{4\sqrt{f(0)}}\ne \frac{\tilde{M}(z_0)}{\sqrt{f(0)}}-\frac{\ln(\tilde{h})'(0)}{4\sqrt{\tilde{f}(0)}}.
\end{equation*}

\medskip

\noindent By a standard continuity argument, there is a ball $B$ of center $z_0$ such that :

\begin{equation*}
\displaystyle \frac{M(z)}{\sqrt{f(0)}}-\frac{\ln'(h)(0)}{4\sqrt{f(0)}}\ne \frac{\tilde{M}(z)}{\sqrt{f(0)}}-\frac{\ln(\tilde{h})'(0)}{4\sqrt{\tilde{f}(0)}},\quad\forall z\in B.
\end{equation*} Then, necessarily, we have:
\begin{equation}
\label{eg}
\displaystyle \frac{M(z)}{\sqrt{f(0)}}-\frac{\ln'(h)(0)}{4\sqrt{f(0)}}=\frac{\tilde{N}(z)}{\sqrt{f(1)}}+\frac{\ln(\tilde{h})'(1)}{4\sqrt{\tilde{f}(1)}},\quad\forall z\in B.
\end{equation}
Then, using the analytic continuation principle, the previous equality is true for every $z\in \mathbb{C}\textbackslash\mathcal{P}$, where $\mathcal{P}$ is the set of roots of $\Delta(z)$. Thanks to the asymptotics of $M(z)$ and $\tilde{N}(z)$, one gets :
\begin{equation*}
 	f(0)=f(1)\:\:\big(=\tilde{f}(1)\big)\quad\mathrm{and}\quad \frac{\ln'(h)(0)}{4\sqrt{f(0)}}=-\frac{\ln(\tilde{h})'(1)}{4\sqrt{\tilde{f}(1)}}.
\end{equation*}
Hence, simplifying in (\ref{eg}), we obtain
 \begin{equation*}
 	M(z)=\tilde{N}(z) \ {\rm{for \ all}}\:\: z \in \mathbb{C}\backslash \mathbb{R}.
 \end{equation*}
By symmetry, $\tilde{N}$ has the same role as $\tilde{M}$ for the potential $x\mapsto \tilde{q}(1-x)$. Now, it follows, from the Borg-Marchenko's theorem that:
\begin{equation*}
q(x)=\tilde{q}(1-x)\quad\forall x\in\:[0,1],
\end{equation*}
and as previously, one gets:
\begin{equation*}
f=\tilde{f}\circ\eta.
\end{equation*}

$\quad$

\noindent {\bf \underline{Case 2} : ${\bf f(0)=\tilde{f}(1)}$ and ${\bf f(1)=\tilde{f}(0)}$}.

$\quad$

\noindent
The proof is identical interchanging the roles of $M$ and $N$.

$\quad$

\section{Uniqueness results on the trace and the determinant}

In this section, we assume that $(K,g_K)=(\mathbb{S}^{n-1},g_{\mathbb{S}})$, and our main result is the following:

\begin{proposition}
Assume that $\sigma(\Lambda_{g}(\lambda))=\sigma(\Lambda_{\tilde{g}}(\lambda))$. Then, there is $m_0\in\mathbb{N}$ such that :
\begin{center}
	$\forall m\in\mathbb{N}$, $m\ge m_0\Rightarrow$ $\det(\Lambda_g^m(\lambda))=\det(\Lambda_{\tilde{g}}^m(\lambda))$ and Tr$(\Lambda_g^m(\lambda))=$Tr$(\Lambda_{\tilde{g}}^m(\lambda))$.
\end{center}
\end{proposition}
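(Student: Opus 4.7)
The plan is to show that for $m\ge m_0$, the unordered pair $\{\lambda^-(\mu_m),\lambda^+(\mu_m)\}$ of eigenvalues of $\Lambda_g^m(\lambda)$ equals $\{\tilde\lambda^-(\mu_m),\tilde\lambda^+(\mu_m)\}$. Since trace and determinant are the sum and product of these two eigenvalues, this pairwise coincidence is the desired conclusion. The hypothesis $\sigma(\Lambda_g(\lambda))=\sigma(\Lambda_{\tilde g}(\lambda))$ provides, for each $m$ and each $\epsilon\in\{+,-\}$, some index $m'$ and some sign $\epsilon'\in\{+,-\}$ with $\lambda^\epsilon(\mu_m)=\tilde\lambda^{\epsilon'}(\mu_{m'})$; the task is to show that for large $m$ one may consistently take $m'=m$.

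I would split the argument according to whether $f(0)\ne f(1)$ or $f(0)=f(1)$. In the first subcase (WLOG $f(0)<f(1)$), Lemma \ref{vp} shows that the two subsequences $\big(\lambda^+(\mu_m)\big)$ and $\big(\lambda^-(\mu_m)\big)$ have distinct leading slopes $1/\sqrt{f(0)}$ and $1/\sqrt{f(1)}$. A density/Weyl-law comparison between the two full multisets forces $\{f(0),f(1)\}=\{\tilde f(0),\tilde f(1)\}$, leaving two scenarios: $f(0)=\tilde f(0),\ f(1)=\tilde f(1)$ (sign-preserving matching) or the swap $f(0)=\tilde f(1),\ f(1)=\tilde f(0)$ (sign-inverting matching). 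In either scenario, using the explicit spherical expansion $\sqrt{\mu_m}=\ell+(n-2)/2+O(1/\ell)$ when $\mu_m=\ell(\ell+n-2)$, the identity $\lambda^\epsilon(\mu_m)=\tilde\lambda^{\epsilon'}(\mu_{m'})$ reduces, after cancellation of the leading $\sqrt{\mu_m}$ contribution, to
\[
\ell-\ell' \;=\; \tfrac14\big(\ln(\tilde h)'(k')-\ln(h)'(k)\big) + O(1/\ell),
\]
for appropriate $k,k'\in\{0,1\}$, where $\ell,\ell'$ are the spherical-harmonic degrees of $\mu_m$, $\mu_{m'}$. Since $\ln(h)'=(n-2)f'/f$, the hypothesis $f,\tilde f\in\mathcal{C}_b$ bounds each of $|\ln(h)'(k)/4|$ and $|\ln(\tilde h)'(k')/4|$ by $1/4$, so the right-hand side has absolute value strictly less than $1$ for $\ell\ge\ell_0$; integrality of $\ell-\ell'$ then yields $\ell=\ell'$. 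The subcase $f(0)=f(1)$ is handled analogously: both $\lambda^\pm(\mu_m)$ share the leading slope $1/\sqrt{f(0)}$, and the $\mathcal{C}_b$ bound confines the full pair to a window of width strictly less than $1/\sqrt{f(0)}$ around $(\ell+(n-2)/2)/\sqrt{f(0)}$; this is narrower than the gap to the pair coming from the $(\ell\pm 1)$-st spherical block, which rules out any mixing between different blocks.

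The main obstacle is the rigid index matching in both subcases. Without the $\mathcal{C}_b$ control, the additive constants in the asymptotic expansions of $\lambda^\pm(\mu_m)$ could exceed half of the asymptotic spacing between consecutive spherical blocks, thereby allowing pathological identifications $m\leftrightarrow m'$ with $m\ne m'$ that destroy the pairwise trace/determinant equalities. The hypothesis $|f'(k)/f(k)|\le 1/(n-2)$ is precisely calibrated so that $|\ln(h)'(k)/4|\le 1/4$, which is strictly less than the critical half-spacing $1/2$ between consecutive integer spherical degrees; this is the key inequality that closes the argument and delivers the trace/determinant equalities for all $m\ge m_0$.
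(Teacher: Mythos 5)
Your reduction of trace/determinant equality to an unordered-pair coincidence $\{\lambda^-(\mu_m),\lambda^+(\mu_m)\}=\{\tilde\lambda^-(\mu_m),\tilde\lambda^+(\mu_m)\}$ is the right target, and your ``window'' argument in the subcase $f(0)=f(1)$ is sound: with all four subsequences sharing the slope $1/\sqrt{f(0)}$, the $\mathcal{C}_b$ bound pins $\lambda^\pm(\kappa_\ell)$ and $\tilde\lambda^\pm(\kappa_\ell)$ into pairwise disjoint intervals around $(\ell+(n-2)/2)/\sqrt{f(0)}$, forcing block-by-block coincidence. This is arguably more direct than the paper's own route for $n\geq 3$ (which goes through a Müntz-dense subset plus the Nevanlinna uniqueness theorem) and is worth noting as a simplification of that subcase.

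However, there is a genuine gap in the subcase $f(0)\neq f(1)$. Your reduction ``after cancellation of the leading $\sqrt{\mu_m}$ contribution'' to an integrality statement for $\ell-\ell'$ presupposes that the two slopes $1/\sqrt{f(k)}$ and $1/\sqrt{\tilde f(k')}$ coincide, i.e.\ that the matching is slope-consistent. But the spectral equality $\sigma(\Lambda_g(\lambda))=\sigma(\Lambda_{\tilde g}(\lambda))$ only hands you \emph{some} $m'$ and \emph{some} sign $\epsilon'$; nothing prevents ``cross'' matchings $\lambda^-(\kappa_\ell)=\tilde\lambda^+(\kappa_{\ell'})$ in which the slow sequence (slope $1/\sqrt{f(1)}$) crosses the fast one (slope $1/\sqrt{\tilde f(0)}=1/\sqrt{f(0)}$). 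For such matchings $\ell'\approx(\sqrt{f(0)}/\sqrt{f(1)})\,\ell$, the leading terms do not cancel, $\ell-\ell'$ is unbounded, and the $\mathcal{C}_b$ inequality gives no control at all. Ruling out — or rather, taming — these crossings is the hard part of this subcase, and the paper devotes Proposition \ref{inclusion_1} together with Lemmas \ref{image} and \ref{lemman} to it: one shows that the bad indices form a sequence skipping at least one integer each step, hence the complementary good indices are Müntz-dense ($\sum 1/a_m=+\infty$), and only then does the Nevanlinna-class argument upgrade the equalities from that subset to all $m$. Your proposal never confronts the cross matchings and has no replacement for this counting/analytic-continuation machinery, so the $f(0)\neq f(1)$ branch of the argument does not close.

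A secondary point: your ``density/Weyl-law comparison'' forcing $\{f(0),f(1)\}=\{\tilde f(0),\tilde f(1)\}$ compresses into one sentence what in the paper is a two-step lemma — first the boundary-volume invariant yields $f(0)^{(n-1)/2}+f(1)^{(n-1)/2}=\tilde f(0)^{(n-1)/2}+\tilde f(1)^{(n-1)/2}$, and then a delicate analysis of the consecutive-gap statistic $\alpha_A=\inf|\lambda^+(\kappa_n)-\lambda^-(\kappa_m)|$ excludes the possibility that $f(0)\notin\{\tilde f(0),\tilde f(1)\}$. The conclusion you invoke is correct, but it is not a one-liner and, if you keep this route, that lemma would need to be reproduced in full.
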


\vspace{0.2cm}
\noindent
Before giving the proof of this proposition, let us begin by the following lemma:

\begin{lemma}
	Under the hypothesis $\sigma(\Lambda_{g}(\lambda))=\sigma(\Lambda_{\tilde{g}}(\lambda))$, we have the alternative :
	\begin{center}
		$\left\{
	\begin{aligned}
	&f(0)=\tilde{f}(0)	 \\
	&f(1)=\tilde{f}(1)
	\end{aligned}\right.\:\:$  or  $\:\:\left\{\begin{aligned}
	&f(0)=\tilde{f}(1)	 \\
	&f(1)=\tilde{f}(0).
	\end{aligned}\right.$
	\end{center}

\end{lemma}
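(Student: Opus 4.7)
The plan is to show that $\{f(0),f(1)\}$ is a Steklov spectral invariant, by reading it off from the multiset structure of $\sigma(\Lambda_g(\lambda))$. The argument splits naturally into the cases $n=2$ and $n\ge 3$.

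For $n=2$, the boundary $\partial M=\Gamma_0\sqcup\Gamma_1$ consists of two circles of lengths $2\pi\sqrt{f(0)}$ and $2\pi\sqrt{f(1)}$, since $\Gamma_k$ carries the induced metric $f(k)g_{\mathbb{S}}$. As recalled in the introduction, for smooth surfaces the number and lengths of the connected components of $\partial M$ are Steklov invariants (see \cite{girouard2017spectral,girouard2019steklov}), which directly yields $\{f(0),f(1)\}=\{\tilde f(0),\tilde f(1)\}$.

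For $n\ge 3$, I would exploit the multiplicity structure of the spherical Laplacian. On $\mathbb{S}^{n-1}$ the eigenvalue $\mu_m=m(m+n-2)$ has multiplicity $d_m=\binom{m+n-2}{n-2}+\binom{m+n-3}{n-2}$, and this sequence is strictly increasing in $m$; hence $d_m$ uniquely determines $m$. The block-diagonal description of $\Lambda_g(\lambda)$ from Section 2 shows that each value $\lambda^\pm(\mu_m)$ contributes to $\sigma(\Lambda_g(\lambda))$ with multiplicity exactly $d_m$, apart from collision values $\lambda^+(\mu_m)=\lambda^-(\mu_{m'})$ where the multiplicity becomes $d_m+d_{m'}$. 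Picking a spectral value $\lambda$ of exact multiplicity $d_m$ for a large $m$ therefore recovers the index $m$, and Lemma \ref{vp} combined with the spherical asymptotic $\sqrt{\mu_m}\sim m$ gives $\lambda/m\to 1/\sqrt{f(\ast)}$ for some $\ast\in\{0,1\}$. Letting $m$ run along an increasing sequence of \emph{clean} (non-colliding) indices, one recovers the full set $\{1/\sqrt{f(0)},1/\sqrt{f(1)}\}$. Applying the same procedure to $\sigma(\Lambda_{\tilde g}(\lambda))$ and using the equality of spectra yields the desired alternative.

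The main obstacle is the collision set $\mathcal{C}:=\{m:\exists m',\ \lambda^+(\mu_m)=\lambda^-(\mu_{m'})\}$. When $f(0)\ne f(1)$, the two families $(\lambda^\pm(\mu_m))_m$ have distinct asymptotic slopes, from which I would deduce that $\mathcal{C}$ has asymptotic density zero, leaving infinitely many clean indices on which to run the recovery argument. The degenerate case $f(0)=f(1)$ must be handled separately: both sequences then merge, every generic spectral value has multiplicity $2d_m$, and matching this pattern against the analogous analysis for $\tilde g$, together with the leading Weyl law that determines $f(0)^{(n-1)/2}+f(1)^{(n-1)/2}$, forces $\tilde f(0)=\tilde f(1)=f(0)$.
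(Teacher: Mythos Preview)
Your approach is genuinely different from the paper's, and the outline for $n\ge 3$ contains a real gap together with one incorrect claim.

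\textbf{What the paper does.} The paper works only with the set $\Sigma(\Lambda_g(\lambda))$ of \emph{distinct} Steklov eigenvalues, never with multiplicities. Step 1 is the Weyl--volume identity you also mention, giving $f(0)^{(n-1)/2}+f(1)^{(n-1)/2}=\tilde f(0)^{(n-1)/2}+\tilde f(1)^{(n-1)/2}$. Step 2 argues by contradiction: if $f(0)<\min\{\tilde f(0),\tilde f(1)\}$ then the Weyl identity forces $f(1)>\max\{\tilde f(0),\tilde f(1)\}$. One then locates a pair $\lambda^+(\kappa_n),\lambda^-(\kappa_m)$ within distance roughly $\tfrac{1}{2\sqrt{f(1)}}$, matches these two values to elements $\tilde\lambda^\pm(\kappa_\cdot)$ of $\Sigma(\Lambda_{\tilde g}(\lambda))$, and observes that the gaps between consecutive $\tilde\lambda^+$'s (resp.\ $\tilde\lambda^-$'s), of size about $\tfrac{1}{\sqrt{\tilde f(0)}}$ and $\tfrac{1}{\sqrt{\tilde f(1)}}$, both strictly exceed the step $\tfrac{1}{\sqrt{f(1)}}$ of the sequence $\lambda^-(\kappa_\cdot)$. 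Hence $\lambda^-(\kappa_{m\pm 1})$ falls strictly between two consecutive $\tilde\lambda^+$'s and strictly between two consecutive $\tilde\lambda^-$'s, so it is absent from $\Sigma(\Lambda_{\tilde g}(\lambda))$.

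\textbf{The gap in your $n\ge 3$ argument.} The inference ``a spectral value of exact multiplicity $d_m$ recovers the index $m$'' is not justified. You argue only the forward direction (a clean value at index $m$ has multiplicity $d_m$). For the converse you would need that no collision multiplicity $d_{m_1}+d_{m_2}$ (or a triple or higher collision) coincides with some $d_m$; this is a nontrivial arithmetic fact about the sequence $d_m$ that you neither state nor prove. There is also a circularity: from the spectrum alone you cannot tell in advance which values are clean, so ``pick a clean spectral value'' is not an operation you can perform without already knowing the decomposition. (For $n=3$ there is a cheap fix, since $d_m=2m+1$ is odd while any collision has even multiplicity; for $n\ge 4$ no such parity trick is available.) The density-zero claim for $\mathcal C$ is plausible but also left unargued.

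\textbf{The error in the degenerate case.} When $f(0)=f(1)$ the two sequences $\lambda^\pm(\kappa_m)$ do \emph{not} merge: by Lemma \ref{vp} they share the leading term $m/\sqrt{f(0)}$ but differ asymptotically by the constant $\tfrac{1}{4\sqrt{f(0)}}\bigl(\ln(h)'(0)+\ln(h)'(1)\bigr)$, and in any case $\Lambda_g^m(\lambda)$ has nonzero off-diagonal entries, so it is never a homothety and $\lambda^+(\kappa_m)\ne\lambda^-(\kappa_m)$ for every $m$. Thus the generic multiplicity is still $d_m$, not $2d_m$, and your proposed matching with $\tilde g$ does not go through as written.

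\textbf{On $n=2$.} The invariant you quote (lengths of boundary components) is established in the cited references for the classical Steklov problem, i.e.\ $\lambda=0$, whereas the present lemma concerns $\lambda\ne 0$. The conclusion you want is still easily available from the asymptotics of Lemma \ref{vp}, but you should extract it from there rather than invoke the $\lambda=0$ result as a black box.
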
	

\begin{proof} First, let us define  the set:
	\begin{equation*}
	\Sigma\big(\Lambda_g(\lambda)\big)=\{\lambda^\pm(\kappa_m),\quad m\in\mathbb{N}\}
\end{equation*}
where $\kappa_m$ is the $m-$th eigenvalue of the usual Laplace-Beltrami operator on the sphere $\mathbb{S}^{n-1}$ and counted without multiplicity. Thanks to our hypothesis, one has obviously	
\begin{equation*}
\Sigma\big(\Lambda_g(\lambda)\big)=\Sigma\big(\Lambda_{\tilde{g}}(\lambda)\big).
\end{equation*}
When $K=\mathbb{S}^{n-1}$, we have an explicit formula for $\kappa_m$ (see for instance \cite{shubin1987pseudodifferential}) :
	\begin{equation*}
	\kappa_m=m(m+n-2),\quad \forall m\in\mathbb{N}.
	\end{equation*}
	
	$\quad$
	
	\noindent The proof involves two steps.
	
	\medskip

		\noindent {\bf Step 1}. First we have :
		\begin{equation}
		\label{coef}f(0)^{\frac{n-1}{2}}+f(1)^{\frac{n-1}{2}}=\tilde{f}(0)^{\frac{n-1}{2}}+\tilde{f}(1)^{\frac{n-1}{2}}.
		\end{equation} 
	
	$\quad$
	
	\noindent \noindent Indeed, it is known that the Steklov spectrum determines the volume of the boundary of $M$ : this is an immediate consequence of the Weyl's law (\ref{WeylSteklov}) for Steklov eigenvalues (see \cite{girouard2017spectral}).
	\noindent We have $\partial M=\Gamma_0\cup\Gamma_1$ where, for $i\in\{0,1\}$, $\Gamma_i$ is the sphere $\mathbb{S}^{n-1}$ equipped with the metric $\gamma_i=f(i)g_{\mathbb{S}}$. Hence :
		\begin{equation*}
		\begin{aligned}
			\mathrm{Vol}\big(\partial M\big)=\mathrm{Vol}\big(\partial \tilde{M}\big)&\Leftrightarrow \int_{\Gamma_0}\,d\mathrm{Vol_{\gamma_0}} + \int_{\Gamma_1}\,d\mathrm{Vol_{\gamma_1}} = \int_{\Gamma_0}\,d\mathrm{Vol_{{\tilde\gamma}_0}}+\int_{\Gamma_1}\,d\mathrm{Vol_{{\tilde\gamma}_1}}\\
			&\Leftrightarrow \bigg(f(0)^{\frac{n-1}{2}}+f(1)^{\frac{n-1}{2}}\bigg)\mathrm{Vol}(\mathbb{S}^{n-1})=\bigg(\tilde{f}(0)^{\frac{n-1}{2}}+\tilde{f}(1)^{\frac{n-1}{2}}\bigg)\mathrm{Vol}(\mathbb{S}^{n-1}),
		\end{aligned}
	\end{equation*}
	
	\noindent and this proves the claim.
	
	$\quad$
	
 \noindent {\bf Step 2.} We show that : $f(0)\in\{\tilde{f}(0),\tilde{f}(1)\}$.

\medskip

\noindent Assume this statement is false. Without loss of generality, assume that $\displaystyle f(0)<\min\{\tilde{f(0)},\tilde{f}(1)\}$. Then the equality (\ref{coef}) implies : $f(1)>\max\{\tilde{f}(0),\tilde{f}(1)\}$.

$\quad$

\noindent Our strategy is the following : we prove that one of the elements of $\Sigma\big(\Lambda_g(\lambda)\big)$ is not in $\Sigma\big(\Lambda_{\tilde{g}}(\lambda)\big)$ and this shall give a contradiction.

$\quad$

\noindent Let $\varepsilon$ and $A$ two positive numbers. Set :

\begin{equation*}
\alpha_A=\inf\{|\lambda^+(\kappa_n)-\lambda^-(\kappa_m)|\:|\: m,n\ge A\}.
\end{equation*}

$\quad$

\noindent  We claim that $\displaystyle \alpha_A\le \frac{1}{2\sqrt{f(1)}}+\varepsilon$ for a sufficiently large $A$. 

\medskip

$\quad$

\noindent Indeed, let $n\in\mathbb{N}$ and set \begin{center}
	$\displaystyle m=\max\bigg\{j\in\mathbb{N}\:\:\big|\:\: \lambda^-(\kappa_j)<\lambda^+(\kappa_n)-\frac{1}{2\sqrt{f(1)}}-\frac{\varepsilon}{2}   \bigg\}$
\end{center}

\noindent By definition of $m$, $\displaystyle \lambda^-(\kappa_{m+1})\ge \lambda^+(\kappa_n)-\frac{1}{2\sqrt{f(1)}}-\varepsilon$. But :
\begin{equation*}
\begin{aligned}
\lambda^-(\kappa_{m+1})&=\lambda^-(\kappa_{m})+\frac{1}{\sqrt{f(1)}}+O\bigg(\frac{1}{m}\bigg)\\
&< \lambda^+(\kappa_{n})-\frac{1}{2\sqrt{f(1)}}-\frac{\varepsilon}{2}+\frac{1}{\sqrt{f(1)}}+O\bigg(\frac{1}{m}\bigg)\\
&<\lambda^+(\kappa_{n})+\frac{1}{2\sqrt{f(1)}}+\frac{\varepsilon}{2}+O\bigg(\frac{1}{m}\bigg)
\end{aligned}
\end{equation*}

\noindent Hence 
\begin{center}
	$\displaystyle \alpha_A\le |\lambda^+(\kappa_{n})-\lambda^-(\kappa_{m+1})|\le \frac{1}{2\sqrt{f(1)}}+\varepsilon,\quad$ for $m,n\ge A$ sufficiently large.
\end{center}

$\quad$

\noindent From the above, we can now choose $(m,n)\in\mathbb{N}^2$ such that :

\begin{equation}
\label{in1}
-\frac{1}{2\sqrt{f(1)}}-\varepsilon\le \lambda^+(\kappa_{n})-\lambda^-(\kappa_{m})\le \frac{1}{2\sqrt{f(1)}}+\varepsilon.
\end{equation}

\noindent The equality of the sets $\Sigma\big(\Lambda_g(\lambda)\big)$ and $\Sigma\big(\Lambda_{\tilde{g}}(\lambda)\big)$ gives two elements $\tilde{\lambda}_1$ and $\tilde{\lambda}_2$ of $\Sigma\big(\Lambda_{\tilde{g}}(\lambda)\big)$ such that $\displaystyle \{\lambda^-(\kappa_{m}),\lambda^+(\kappa_{n})\}=\{\tilde{\lambda}_1,\tilde{\lambda}_2\}$. Remark that if $\displaystyle \lambda^-(\kappa_{m})\ne\lambda^+(\kappa_{n})$ then the elements $\tilde{\lambda}_1$ and $\tilde{\lambda}_2$ can not both belong to the same sequence $(\tilde{\lambda}^-(\kappa_\ell))$ or $(\tilde{\lambda}^+(\kappa_p))$ because, for indexes $m$ and $n$ large enough, one would get :
\begin{equation*}
|\tilde{\lambda}_1-\tilde{\lambda}_2|\ge \min\bigg\{\frac{1}{\sqrt{\tilde{f}(0)}},\frac{1}{\sqrt{\tilde{f}(1)}}\bigg\}-\varepsilon
\end{equation*}
and, by choosing $\varepsilon$ small enough, this would contradict (\ref{in1}). Consequently, we have the existence of $(\ell,p)\in\mathbb{N}^2$ such that
\begin{equation*}
	\{\lambda^-(\kappa_{m}),\lambda^+(\kappa_{n})\}=\{\tilde{\lambda}^-(\kappa_{\ell}),\tilde{\lambda}^+(\kappa_{p})\}.
\end{equation*}

\noindent Let us assume first that
\begin{equation}
\label{eqset}
\left\{
\begin{aligned}
& \lambda^-(\kappa_m)= \tilde{\lambda}^-(\kappa_\ell) \\
& \lambda^+(\kappa_n)= \tilde{\lambda}^+(\kappa_p).  \end{aligned} \right.
\end{equation}

\begin{enumerate}
	\item [$*$] Case 1 : $\lambda^-(\kappa_m)\le \lambda^+(\kappa_n)$. Then $\lambda^-(\kappa_{m+1})$ is not equal to any element of $\Sigma(\Lambda_{\tilde{g}}(\lambda))$. Indeed we have on one hand :
	
	\begin{equation*}
	\begin{aligned}
	\lambda^-(\kappa_{m+1})&=\lambda^-(\kappa_{m})+\frac{1}{\sqrt{f(1)}}+O\bigg(\frac{1}{m}\bigg)\\
	&=\tilde{\lambda}^-(\kappa_{\ell})+\frac{1}{\sqrt{f(1)}}+O\bigg(\frac{1}{m}\bigg)\\
	&=\tilde{\lambda}^-(\kappa_{\ell+1})+\bigg[\underbrace{\frac{1}{\sqrt{f(1)}}-\frac{1}{\sqrt{\tilde{f}(1)}}}_{<0}\bigg]+O\bigg(\frac{1}{m}\bigg)+O\bigg(\frac{1}{\ell}\bigg)\\
	\end{aligned}
	\end{equation*}
	
	\noindent Hence, for $m$ and $\ell$ large enough : $\tilde{\lambda}^-(\kappa_{\ell})<\lambda^-(\kappa_{m+1})<\tilde{\lambda}^-(\kappa_{\ell+1})$. On the other hand :
	
	\begin{equation*}
	\begin{aligned}
	\lambda^-(\kappa_{m+1})&= \lambda^-(\kappa_{m})+\frac{1}{\sqrt{f(1)}}+O\bigg(\frac{1}{m}\bigg)\\
	&\le \lambda^+(\kappa_{n})+\frac{1}{\sqrt{f(1)}}+O\bigg(\frac{1}{m}\bigg)\\
	&= \tilde{\lambda}^+(\kappa_{p})+\frac{1}{\sqrt{f(1)}}+O\bigg(\frac{1}{m}\bigg)\\
	&=\tilde{\lambda}^+(\kappa_{p+1})+\bigg[\underbrace{\frac{1}{\sqrt{f(1)}}-\frac{1}{\sqrt{\tilde{f}(0)}}}_{<0}\bigg]+O\bigg(\frac{1}{m}\bigg)+O\bigg(\frac{1}{p}\bigg).
	\end{aligned}
	\end{equation*}
	\noindent Moreover, from (\ref{in1}) and (\ref{eqset}) : \begin{equation*}
	\begin{aligned}
	\displaystyle \lambda^-(\kappa_{m+1})&=\lambda^-(\kappa_{m})+\frac{1}{\sqrt{f(1)}}+O\bigg(\frac{1}{m}\bigg)\\&=\lambda^-(\kappa_{m})+\frac{1}{2\sqrt{f(1)}}+\varepsilon+O\bigg(\frac{1}{m}\bigg)+\frac{1}{2\sqrt{f(1)}}-\varepsilon\\&\ge \tilde{\lambda}^+(\kappa_p)+\frac{1}{2\sqrt{f(1)}}-2\varepsilon
	\end{aligned}
	\end{equation*}
	
	$\quad$

	\noindent As a result, for $m$ and $p$ large enough : $\displaystyle \tilde{\lambda}^+(\kappa_{p})<\lambda^-(\kappa_{m+1})<\tilde{\lambda}^+(\kappa_{p+1})$
	
	\medskip
	
	\noindent The sequences $(\tilde{\lambda}^-(\kappa_m))$ and $(\tilde{\lambda}^+(\kappa_m))$ being strictly increasing (at least for $m$ large enough), none of the elements of $\Sigma(\Lambda_{\tilde{g}}(\lambda))$ can be equal to $\lambda^-(\kappa_{m+1})$. This refutes $\Sigma(\Lambda_{\tilde{g}}(\lambda))=\Sigma(\Lambda_{g}(\lambda))$.
	
	$\quad$
	
	\item[$*$] Case 2 : $\lambda^-(\kappa_m)> \lambda^+(\kappa_n)$. With similar arguments, one can prove :
	\begin{equation*}
	\max\big(\tilde{\lambda}^-(\kappa_{\ell-1}), \tilde{\lambda}^+(\kappa_{p-1})\big)< \lambda^-(\kappa_{m-1}) < \min\big(\tilde{\lambda}^-(\kappa_{\ell}), \tilde{\lambda}^+(\kappa_{p})\big).
	\end{equation*}
	
\end{enumerate}

\noindent Now, if we assume that
\begin{equation*}
\left\{
\begin{aligned}
& \lambda^-(\kappa_m)= \tilde{\lambda}^+(\kappa_p) \\
& \lambda^+(\kappa_n)= \tilde{\lambda}^-(\kappa_\ell),  \end{aligned} \right.
\end{equation*}

\noindent one can also prove, by interchanging the roles of $\tilde{f}(0)$ and $\tilde{f}(1)$, that $\lambda^-(\kappa_{m+1})$ or $\lambda^-(\kappa_{m-1})$ does not belong to the set $\Sigma(\Lambda_{\tilde{g}}(\lambda))$.

$\quad$ 

\noindent Thus $f(0)\in\{\tilde{f}(0),\tilde{f}(1)\}$. Associated to the equality (\ref{coef}), this gives the wanted conclusion.

\end{proof}

\vspace{1cm}
\medskip

\noindent From now on, we assume that $f(0)=\tilde{f}(0)$ and $f(1)=\tilde{f}(1)$, since the case 
 $f(0)=\tilde{f}(1)$ and $f(1)=\tilde{f}(0)$ is obtained by substituting the roles of $\tilde{\lambda}^-(\kappa_m)$ and $\tilde{\lambda}^+(\kappa_m)$. First, let us begin by a simple case:

\subsection{The case ${\bf f(0)= f(1)}$}

\medskip

\noindent Without loss of generality, we can assume that $f(0)=f(1)=1$. Thanks to Lemma \ref{vp}, $\Lambda_g^m(\lambda)$ has two eigenvalues $\lambda^-(\mu_m)$ and $\lambda^+(\mu_m)$ whose asymptotics are given by :
\begin{equation*}
\left\{
\begin{aligned}
&\lambda^-(\mu_m)=\sqrt{\mu_m}-\frac{\ln(h)'(1)}{4\sqrt{f(1)}}+O\bigg(\frac{1}{\sqrt{\mu_m}}\bigg)	 \\
&\lambda^+(\mu_m)=\sqrt{\mu_m}+\frac{\ln(h)'(0)}{4\sqrt{f(0)}}+O\bigg(\frac{1}{\sqrt{\mu_m}}\bigg).
\end{aligned}\right.
\end{equation*}
We recall that : \begin{equation*}
	\sqrt{\kappa_m}=m+\frac{n-2}{2}+O\bigg(\frac{1}{m}\bigg),
\end{equation*}
and for $m\in\mathbb{N}$, we set $\displaystyle V_m=\bigg\{\lambda^-(\kappa_m),\:\lambda^+(\kappa_m)\bigg\}$ and $\displaystyle \tilde{V}_m:=\bigg\{\tilde{\lambda}^-(\kappa_m),\:\tilde{\lambda}^+(\kappa_m)\bigg\}$.

\vspace{1.5cm}
\noindent
$\bullet$ {\bf{The two dimensional case}}

\vspace{0.5cm}
\noindent 
In this case, the eigenvalues $\lambda^\pm(\kappa_m)$ have the following asymptotics :

\begin{equation*}
\left\{
\begin{aligned}
&\lambda^-(\kappa_m)=m+O\bigg(\frac{1}{m}\bigg)	 \\
&\lambda^+(\kappa_m)=m+O\bigg(\frac{1}{m}\bigg).
\end{aligned}\right.
\end{equation*}

\noindent For $m$ large enough, the sets $V_m$ and $\tilde{V}_m$ are both included in the interval $\displaystyle \big[m-\frac{1}{4},m+\frac{1}{4}\big]$. In particular $V_m\cap \tilde{V}_{m'}=\emptyset\:$ if $\:\:m\ne m'$. The equality $\Sigma\big(\Lambda_g(\lambda)\big)=\Sigma\big(\Lambda_{\tilde{g}}(\lambda)\big)$ leads to the equalities $\displaystyle V_m=\tilde{V}_m$ if $m$ is greater than some index $m_0$. Consequently, there is $m_0$ such that, for $m\ge m_0$ :
\begin{equation*}
\left\{\begin{aligned}
	&\lambda^-(\kappa_m)+\lambda^+(\kappa_m)=\tilde{\lambda}^-(\kappa_m)+\tilde{\lambda}^+(\kappa_m) \\
	&\lambda^-(\kappa_m)\lambda^+(\kappa_m)=\tilde{\lambda}^-(\kappa_m)\tilde{\lambda}^+(\kappa_m).
	\end{aligned}\right.
\end{equation*}

\medskip

\noindent Of course, the previous equalities are still true when $\kappa_m$ is replaced by $\mu_m$. Thus, we have proved:

\medskip

\begin{center}
	$\forall m\in\mathbb{N},\:\:m\ge m_0\Rightarrow$ Tr$(\Lambda^m_g)=\: $Tr$(\Lambda^m_{\tilde{g}})$ and $\det(\Lambda^m_g)=\det(\Lambda^m_{\tilde{g}})$.
\end{center}

\vspace{1cm}
\noindent
$\bullet$ {\bf{The $n \geq 3$ dimensional case}}

\medskip

\noindent In this case, we use the following asymptotics of the eigenvalues $\lambda^\pm(\kappa_m)$:

\begin{equation*}
\left\{
\begin{aligned}
&\lambda^-(\kappa_m)=m+\frac{n-2}{2}-\frac{h'(1)}{4}+O\bigg(\frac{1}{m}\bigg)	 \\
&\lambda^+(\kappa_m)=m+\frac{n-2}{2}+\frac{h'(0)}{4}+O\bigg(\frac{1}{m}\bigg).
\end{aligned}\right.
\end{equation*}

\medskip

\noindent 
Contrary to the two dimensional case, we can not conclude that the sets $V_m$ and $\tilde{V}_m$ are equal. This is due to the presence of the constants $h'(0)$ and $h'(1)$ in these asymptotics. We have the following Proposition:

\begin{proposition}
	\label{eq_trans_1}
There is $\mathcal{L}\subset\mathbb{N}$ such that $\displaystyle \sum_{m\in\mathcal{L}}\frac{1}{m}=+\infty$ satisfying :
	\begin{equation*}
 \forall m\in\mathcal{L},\:\:\left\{
	\begin{aligned}
	& \lambda^-(\kappa_{m})+\lambda^+(\kappa_{m})=\tilde{\lambda}^-(\kappa_{m})+\tilde{\lambda}^+(\kappa_{m}) \\
	& \lambda^-(\kappa_{m})\lambda^+(\kappa_{m})=\tilde{\lambda}^-(\kappa_{m})\tilde{\lambda}^+(\kappa_{m}) \end{aligned}\right.
	\end{equation*}
\end{proposition}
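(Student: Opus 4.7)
The strategy is to prove the stronger statement that the multiset equality $V_m = \tilde{V}_m$ already holds for every $m$ beyond some threshold $m_0$. This immediately yields the desired trace and determinant identities, since $V_m$ (resp.\ $\tilde{V}_m$) is the spectrum of $\Lambda_g^m(\lambda)$ (resp.\ $\Lambda_{\tilde{g}}^m(\lambda)$) and these are $2\times 2$ matrices. One may then take $\mathcal{L} = \{m \in \mathbb{N} : m \ge m_0\}$, for which $\sum_{m \in \mathcal{L}} 1/m = +\infty$ trivially.

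The key ingredient is the hypothesis $f,\tilde{f} \in \mathcal{C}_b$. Because we have normalised $f(0) = f(1) = 1$ and $h := f^{n-2}$, one computes $h'(k) = (n-2)f'(k)$ for $k\in\{0,1\}$, so $|h'(k)/4| \le 1/4$ and likewise $|\tilde{h}'(k)/4| \le 1/4$. Coupled with the standard expansion $\sqrt{\kappa_m} = m + (n-2)/2 + O(1/m)$, the asymptotics in Lemma \ref{vp} confine every element of $V_m \cup \tilde{V}_m$ to the window
$$I_m := \bigl[\,m + \tfrac{n-2}{2} - \tfrac{1}{3},\; m + \tfrac{n-2}{2} + \tfrac{1}{3}\,\bigr],$$
provided $m \ge m_0$ for a threshold $m_0$ large enough to absorb the $O(1/m)$ correction. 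The windows $I_m$ are pairwise disjoint subsets of $\mathbb{R}$.

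Using the set-level equality $\Sigma(\Lambda_g(\lambda)) = \Sigma(\Lambda_{\tilde{g}}(\lambda))$ and intersecting both sides with $I_m$, one obtains $V_m = \tilde{V}_m$ as sets for all $m \ge m_0$. To promote this to a multiset equality I would distinguish two cases. If $V_m = \{a,b\}$ with $a \ne b$, then set equality directly forces $\tilde{V}_m = \{a,b\}$, and the sum/product of elements match on both sides. If instead $V_m$ collapses to a singleton $\{a\}$, the set equality forces $\tilde{\lambda}^-(\kappa_m) = \tilde{\lambda}^+(\kappa_m) = a$ as well, again giving matching trace and determinant. Taking $\mathcal{L} = \{m \ge m_0\}$ then completes the argument.

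The essential obstacle is localized in the second paragraph: without the $\mathcal{C}_b$ bound on $|f'(k)/f(k)|$, the four constant offsets $-h'(1)/4,\,h'(0)/4,\,-\tilde{h}'(1)/4,\,\tilde{h}'(0)/4$ appearing in Lemma \ref{vp} could be arbitrarily large, and the windows $I_m$ and $I_{m+1}$ would overlap. In that situation a mere set-level equality of Steklov spectra would no longer determine which Steklov eigenvalue on one side is paired with which on the other, and the trivial interval argument would collapse. With the $\mathcal{C}_b$ hypothesis in hand, everything else reduces to the disjointness of the $I_m$ and a cardinality check.
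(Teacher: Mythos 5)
Your proof is correct, and it takes a genuinely different route from the paper. The paper proves Proposition \ref{eq_trans_1} through a two-stage subsequence extraction: it first establishes (Lemma \ref{premeg}) that along a subsequence $\mathcal{L}_1$ of divergent harmonic sum, $\lambda^-(\kappa_m)$ agrees with either $\tilde{\lambda}^-(\kappa_m)$ or $\tilde{\lambda}^+(\kappa_m)$ at the \emph{same} index $m$ (using the $\mathcal{C}_b$ bound to show that the sequence of index differences $a_m - \tilde a_m$ converges to an integer of modulus at most $1/2$, hence to $0$), and then extracts a further subsequence on which $\lambda^+$ matches correspondingly, ruling out the cross-pairing $\lambda^+(\kappa_{b_m}) = \tilde{\lambda}^-(\kappa_{b_m})$ by the non-homothety of $\Lambda_g^{b_m}(\lambda)$. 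You instead observe that the $\mathcal{C}_b$ bound $|h'(k)|, |\tilde h'(k)| \le 1$ (after the normalisation $f(0)=f(1)=\tilde f(0)=\tilde f(1)=1$) localises all four quantities $\lambda^\pm(\kappa_m), \tilde\lambda^\pm(\kappa_m)$ in a window of radius $< 1/3$ about $m + \tfrac{n-2}{2}$ for $m$ large, and that these windows are pairwise disjoint, so the set equality $\Sigma(\Lambda_g(\lambda)) = \Sigma(\Lambda_{\tilde g}(\lambda))$ intersected with $I_m$ forces $V_m = \tilde V_m$. This is the same argument the paper already deploys for $n=2$; the paper declares ``contrary to the two-dimensional case, we can not conclude that the sets $V_m$ and $\tilde V_m$ are equal'' because of the constants $h'(0)$, $h'(1)$, but your point is precisely that the $\mathcal{C}_b$ hypothesis — which the paper introduces for the very same reason — renders those constants harmless, and the interval argument goes through verbatim. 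Your approach is more direct, avoids the nested subsequence bookkeeping, and delivers a strictly stronger conclusion (a cofinite $\mathcal{L}$, not merely one of divergent harmonic sum).

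One small imprecision: the singleton case $V_m = \{a\}$ does not actually occur, since the two eigenvalues of $\Lambda_g^m(\lambda)$ are always distinct — the off-diagonal entries $-\tfrac{1}{\sqrt{f(0)}}\tfrac{1}{\Delta(\mu_m)}$ are nonzero, so the matrix is never a homothety; the paper uses exactly this observation in the proof of Lemma \ref{vp} and again when ruling out the cross-pairing. So $V_m$ and $\tilde V_m$ are genuine two-element sets and the passage from set equality to matching trace and determinant is immediate, with no case split needed.
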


\begin{proof} We start with the following Lemma:

\begin{lemma}
	\label{premeg}
There is $\mathcal{L}_1\subset\mathbb{N}$ such that $\displaystyle \sum_{m\in\mathcal{L}_1}\frac{1}{m}=+\infty$ satisfying :
\begin{equation*}
	\bigg(\lambda^-(\kappa_{m})=\tilde{\lambda}^-(\kappa_m),\:\:\forall m\in\mathcal{L}_1\bigg)\quad {\rm{or}}\quad \bigg(\lambda^-(\kappa_{m})=\tilde{\lambda}^+(\kappa_m),\:\:\forall m\in\mathcal{L}_1\bigg)
\end{equation*}
\end{lemma}

\begin{proof}
Since $\Sigma\big(\Lambda_{g}(\lambda)\big)=\Sigma\big(\Lambda_{\tilde{g}}(\lambda)\big)$, we have the inclusion :

\begin{equation*}
	\{\lambda^-(\kappa_m),\:m\in\mathbb{N}\}\subset \{{\tilde{\lambda}}^-(\kappa_m),\:m\in\mathbb{N}\}\cup\{\tilde{\lambda}^+(\kappa_m),\:m\in\mathbb{N}\}.
\end{equation*}

\noindent Thus, there exists  a sequence of integers $(a_m)$ such that $\displaystyle \sum_{m\in\mathbb{N}}\frac{1}{a_m}=+\infty$ and :

\begin{center}
$\displaystyle \{\lambda^-(\kappa_{a_m}),\:m\in\mathbb{N}\}\subset \{{\tilde{\lambda}}^-(\kappa_m),\:m\in\mathbb{N}\}\quad$ or $\quad\{\lambda^-(\kappa_{a_m}),\:m\in\mathbb{N}\}\subset \{{\tilde{\lambda}}^+(\kappa_m),\:m\in\mathbb{N}\}$.
\end{center}

\medskip

\noindent 
For instance, let us study  the first case (since the second case is similar). We can find another sequence of integers $(\tilde{a}_m)$ such that :
\begin{equation*}
	\lambda^-({\kappa_{a_m}})=\tilde{\lambda}^-({\kappa_{\tilde{a}_m}}),\quad\forall m\in\mathbb{N}.
\end{equation*}
Thanks to Lemma \ref{vp}, one obtains :

\begin{equation*}
	a_m-\tilde{a}_m=\frac{\tilde{h}'(1)}{4}-\frac{h'(1)}{4}+O\bigg(\frac{1}{a_m}\bigg).
\end{equation*}

\noindent Therefore, the sequence of integers $(a_m-\tilde{a}_m)$ converges to the integer $\displaystyle \frac{\tilde{h}'(1)}{4}-\frac{h'(1)}{4}=\frac{n-2}{4}\frac{f'(1)}{f(1)}-\frac{n-2}{4}\frac{\tilde{f}'(1)}{\tilde{f}(1)}.$

\noindent 
Recalling that $\displaystyle f,\tilde{f}\in \mathcal{C}_b=\bigg\{f\in C^\infty([0,1]),\: \bigg|\frac{f'(i)}{f(i)}\bigg|\le \frac{1}{n-2},\:i\in\{0,1\}\bigg\}$, we get:
\begin{equation*}
	\bigg|\frac{n-2}{4}\frac{f'(1)}{f(1)}-\frac{n-2}{4}\frac{\tilde{f}'(1)}{\tilde{f}(1)}\bigg|\le\frac{1}{2}.
\end{equation*}
As this quantity must be an integer, we have proved that, for $m\ge m_0$ :
\begin{equation*}
	a_m=\tilde{a}_m.
\end{equation*}

\noindent We conclude the proof of the Lemma setting $\mathcal{L}_1=\{a_m,\: m\ge m_0\}$.

\end{proof}

$\quad$

\noindent Now, we can finish the proof of the Proposition. For instance, assume that:
\begin{equation*}
\lambda^-(\kappa_{m})=\tilde{\lambda}^-(\kappa_m),\:\:\forall m\in\mathcal{L}_1.
\end{equation*}
Repeating the previous argument with $\lambda^+(\kappa_m)$, $m \in \mathcal{L}_1$, we have the inclusion:

\begin{equation*}
\{\lambda^+(\kappa_m),\:m\in\mathcal{L}_1\}\subset \{{\tilde{\lambda}}^-(\kappa_m),\:m\in\mathbb{N}\}\cup\{\tilde{\lambda}^+(\kappa_m),\:m\in\mathbb{N}\}
\end{equation*}

\noindent There is a sequence of integers, denoted $(b_m)\in\mathcal{L}_1$, such that $\displaystyle \sum_{m\in\mathbb{N}}\frac{1}{b_m}=+\infty$ and :

\begin{center}
	$\displaystyle \{\lambda^+(\kappa_{b_m}),\:m\in\mathbb{N}\}\subset \{{\tilde{\lambda}}^-(\kappa_m),\:m\in\mathbb{N}\}\quad$ or $\quad\{\lambda^+(\kappa_{b_m}),\:m\in\mathbb{N}\}\subset \{{\tilde{\lambda}}^+(\kappa_m),\:m\in\mathbb{N}\}$.
\end{center}

\begin{enumerate}
\item[$\bullet$] In the second case, there is a sequence $(\tilde{b}_m)$ such that :
\begin{equation*}
	\lambda^+(\kappa_{b_m})=\tilde{\lambda}^+(\kappa_{\tilde{b}_m})
\end{equation*}
which implies as previously that $b_m=\tilde{b}_m$. Then, setting $\mathcal{L}=\{b_m,\: m\in\mathbb{N}\}$ we have for all $m\in\mathcal{L}$ :
\begin{equation*}
\left\{
\begin{aligned}
&\lambda^-(\kappa_m)=\tilde{\lambda}^-({\kappa_m})	 \\
&\lambda^+(\kappa_m)=\tilde{\lambda}^+({\kappa_m})
\end{aligned}\right.
\end{equation*}

\item[$\bullet$] In the first case, we still could show that :
\begin{equation*}
\lambda^+(\kappa_{b_m})=\tilde{\lambda}^-(\kappa_{b_m})\:\:\big(=\lambda^-(\kappa_{b_m})\big)
\end{equation*}
\noindent But this is not possible because $\Lambda^{b_m}_g(\lambda)$ would then be an homothety, which contradicts the matrix expression of $\Lambda^{b_m}_g(\lambda)$.
\end{enumerate}
\noindent Hence the proof of Proposition \ref{eq_trans_1} is complete.

\end{proof}

\noindent We recall that $\mathcal{P}$ is the set of roots of $\Delta(z)$. For every $z\in\mathbb{C}\backslash\mathcal{P}$, let us set 
\begin{eqnarray}
\label{BC_1}
B(z)&=&-\frac{D(z)+E(z)}{\Delta(z)}+C_0-C_1 \\
\label{BC_2}
C(z)&=&\bigg(-\frac{D(z)}{\Delta(z)}+C_0\bigg)\bigg(-\frac{E(z)}{\Delta(z)}
-C_1\bigg)-\frac{1}{\sqrt{f(0)f(1)}\Delta(z)^2},
\end{eqnarray}
where 
\begin{center}
	$\displaystyle C_0=\frac{\ln(h)'(0)}{4\sqrt{f(0)}},\quad$ $\quad\displaystyle C_1=\frac{\ln(h)'(1)}{4\sqrt{f(1)}}$.
\end{center}

\noindent For every $m$ in $\mathbb{N}$, we have (cf proof of Lemma \ref{vp}):

\begin{center}
	$B(\kappa_m)=\lambda^-(\kappa_m)+\lambda^+(\kappa_m)\quad$ and $\quad C(\kappa_m)=\lambda^-(\kappa_m)\lambda^+(\kappa_m)$.
\end{center}

\medskip

\noindent We introduce the functions $g_1$ and $g_2$ on $\Pi^+$ as follows:
\begin{equation*}
	\begin{aligned}
	g_1(z)&= \Delta(z^2)\tilde{\Delta}(z^2)\big[B(z^2)-\tilde{B}(z^2)\big]\\
	g_2(z)&= \Delta(z^2)^2\tilde{\Delta}(z^2)^2\big[C(z^2)-\tilde{C}(z^2)\big].
	\end{aligned}
\end{equation*}
We claim that $g_1$ and $g_2$ are identically zero. Indeed :

\begin{enumerate}[label=\alph*), align=left, leftmargin=*, noitemsep]
	\item[$\bullet$] $g_1,g_2$ are holomorphic on $\Pi^+$.
	
	\medskip
	
	\item[$\bullet$] $g_1,g_2\in\mathcal{N}(\Pi^+)$ thanks to the estimates of Proposition \ref{estim}.
	
	\medskip
	
	\item[$\bullet$] Thanks to Proposition \ref{eq_trans_1}, we have $g_1(\sqrt{\kappa_m})=g_2(\sqrt{\kappa_m})=0$ for every $m\in\mathcal{L}$. As $\sqrt{\kappa_m}\sim m$, one has $\displaystyle \sum_{m\in\mathcal{L}}\frac{1}{\sqrt{\kappa_m}}=+\infty$. Thus, we can conclude, by Nevanlinna's theorem :
	\begin{equation*}
	g_1\equiv g_2\equiv 0 \:\: {\rm{on}}\:\:\Pi^+.
	\end{equation*}
\end{enumerate}

\noindent In particular, for every $m$ in $\mathbb{N}$ :

\begin{equation*}
\left\{
\begin{aligned}
&\lambda^-(\kappa_m)+\lambda^+(\kappa_m)=\tilde{\lambda}^-({\kappa_m})+\tilde{\lambda}^+({\kappa_m})	 \\
&\lambda^-(\kappa_m)\lambda^+(\kappa_m)=\tilde{\lambda}^-({\kappa_m})\tilde{\lambda}^+({\kappa_m})
\end{aligned}\right.
\end{equation*}

\medskip

\noindent and, by replacing $\kappa_m$ by $\mu_m$ we have for every $m$ in $\mathbb{N}$ :

\medskip

\begin{center}
	Tr$(\Lambda^m_g)=\: $Tr$(\Lambda^m_{\tilde{g}})$ and $\det(\Lambda^m_g)=\det(\Lambda^m_{\tilde{g}})$.
\end{center}

	\subsection{The case ${\bf f(0)\ne f(1)}$}

\medskip

\noindent Without loss of generality, assume that $f(0)<f(1)$. Thanks to the asymptotics:

\begin{equation*}
\sqrt{\kappa_m}=m+\frac{n-2}{2}+O\bigg(\frac{1}{m}\bigg)
\end{equation*} and Lemma \ref{vp}, we have :
	\begin{equation*}
\left\{
\begin{aligned}
&\lambda^-(\kappa_m)=\frac{m}{\sqrt{f(1)}}+\frac{n-2}{2\sqrt{f(1)}}-\frac{\ln(h)'(1)}{4\sqrt{f(1)}}+O\bigg(\frac{1}{m}\bigg)	 \\
&\lambda^+(\kappa_m)=\frac{m}{\sqrt{f(0)}}+\frac{n-2}{2\sqrt{f(0)}}+\frac{\ln(h)'(0)}{4\sqrt{f(0)}}+O\bigg(\frac{1}{m}\bigg).
\end{aligned}\right.
\end{equation*}

$\quad$

\noindent 
Since $f(0)\not= f(1)$, the proof is a little more delicate. As previously, we have to find a subset $\mathcal{L}\subset\mathbb{N}$ such that $\displaystyle \sum_{m\in\mathcal{L}}\frac{1}{m}=+\infty$ and such that $\lambda^-(\kappa_m)=\tilde{\lambda}^-(\kappa_m)$ or $\lambda^+(\kappa_m)=\tilde{\lambda}^+(\kappa_m)$ for all $m  \in \mathcal{L}$. We have the following Proposition:

\begin{proposition}
	\label{inclusion_1}
	There is $\mathcal{L}\subset \mathbb{N}$ satisfying $\displaystyle \sum_{m\in\mathcal{L}}\frac{1}{m}=+\infty$ and such that :
\begin{equation*}
\begin{aligned}
	&\{\lambda^-(\kappa_m),\:\: m\in\mathcal{L}\}\subset \{\tilde{\lambda}^-(\kappa_m),\:\: m\in\mathbb{N}\}\\
	\end{aligned}
	\end{equation*}
\end{proposition}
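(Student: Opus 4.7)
The plan is to partition $\mathbb{N}$ according to which family of $\tilde{g}$-eigenvalues each $\lambda^-(\kappa_m)$ matches, and then show that the branch matching the $\tilde{\lambda}^-$-sequence has positive lower density in $\mathbb{N}$, which immediately forces the divergence of the harmonic sum by comparison.

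Since $\Sigma\big(\Lambda_g(\lambda)\big) = \Sigma\big(\Lambda_{\tilde{g}}(\lambda)\big)$, each $\lambda^-(\kappa_m)$ must equal either some $\tilde{\lambda}^-(\kappa_\ell)$ or some $\tilde{\lambda}^+(\kappa_\ell)$. I introduce
\begin{equation*}
\mathcal{L}^- := \big\{m\in\mathbb{N} \,:\, \exists\,\ell\in\mathbb{N},\ \lambda^-(\kappa_m) = \tilde{\lambda}^-(\kappa_\ell)\big\},
\end{equation*}
and $\mathcal{L}^+$ defined analogously, so that $\mathcal{L}^-\cup\mathcal{L}^+ = \mathbb{N}$. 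I will take $\mathcal{L} = \mathcal{L}^-$, and the task reduces to showing that $\mathcal{L}^+$ is sparse enough.

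To control $\mathcal{L}^+$, I invoke Lemma~\ref{vp} combined with the hypotheses $f(0)=\tilde{f}(0)$, $f(1)=\tilde{f}(1)$ and the sphere asymptotic $\sqrt{\kappa_m}=m+\tfrac{n-2}{2}+O(1/m)$, which yield
\begin{equation*}
\lambda^-(\kappa_m) = \frac{m}{\sqrt{f(1)}}+O(1), \qquad \tilde{\lambda}^+(\kappa_\ell) = \frac{\ell}{\sqrt{f(0)}}+O(1).
\end{equation*}
For $m\in\mathcal{L}^+$ sufficiently large, the strict monotonicity of $\tilde{\lambda}^+(\kappa_\ell)$ in $\ell$ (again from Lemma~\ref{vp}) provides a unique index $\ell(m)$ with $\lambda^-(\kappa_m)=\tilde{\lambda}^+(\kappa_{\ell(m)})$, and equating the above asymptotics gives $\ell(m)=\sqrt{f(0)/f(1)}\,m+O(1)$. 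Setting $c:=\sqrt{f(0)/f(1)}<1$ (which uses $f(0)<f(1)$), there exists a constant $K$ with $\ell(m)\le cm+K$ for all $m$ beyond some threshold.

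The map $m\mapsto \ell(m)$ is injective on $\mathcal{L}^+$, since the strict monotonicity of $\lambda^-(\kappa_m)$ in $m$ (for $m$ large) transfers through the equality $\lambda^-(\kappa_m)=\tilde{\lambda}^+(\kappa_{\ell(m)})$. Consequently, for every large $N$,
\begin{equation*}
\big|\mathcal{L}^+\cap[1,N]\big| \;\le\; \big|\ell\big(\mathcal{L}^+\cap[1,N]\big)\big| \;\le\; cN+K,
\end{equation*}
so that $|\mathcal{L}^-\cap[1,N]|\ge (1-c)N-K$. This positive lower density of $\mathcal{L}^-$ yields $\sum_{m\in\mathcal{L}^-}1/m=+\infty$ by comparison with the harmonic series, which closes the argument. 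The main delicate point is making the asymptotic comparisons and the injectivity uniform in $m$; this is handled by discarding a finite initial segment of indices, which does not affect the divergence of the harmonic sum.
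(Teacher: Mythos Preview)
Your argument is correct and follows the same overall strategy as the paper: partition the indices $m$ according to whether $\lambda^-(\kappa_m)$ hits the $\tilde\lambda^-$-branch or the $\tilde\lambda^+$-branch, use the asymptotics of Lemma~\ref{vp} together with $f(0)<f(1)$ to control the size of the ``bad'' set $\mathcal L^+$, and conclude that the complement $\mathcal L^-$ is thick enough for the harmonic sum to diverge.

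The difference lies only in how the density bound is obtained. The paper argues qualitatively: writing $\psi(m)=A\varphi(m)+C'+o(1)$ with $A=\sqrt{f(1)/f(0)}>1$, it shows that the range of $\psi$ (your $\mathcal L^+$, essentially) cannot contain two consecutive integers, hence its complement $(a_n)$ satisfies $a_n\le 2n+C$, giving lower density $\ge 1/2$. You instead count directly: from $\ell(m)\le cm+K$ with $c=\sqrt{f(0)/f(1)}<1$ and the injectivity of $m\mapsto\ell(m)$, you get $|\mathcal L^+\cap[1,N]|\le cN+O(1)$, hence lower density $\ge 1-c$ for $\mathcal L^-$. Your counting is sharper (it recovers the correct asymptotic density rather than just $1/2$) and slightly more streamlined, but both routes rest on the same two ingredients: the linear relation $\ell(m)\sim cm$ forced by the eigenvalue asymptotics, and the injectivity coming from the eventual strict monotonicity of $\lambda^-(\kappa_m)$ and $\tilde\lambda^+(\kappa_\ell)$.
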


\begin{proof}
	
If the subset of the elements of $\{\lambda^-(\kappa_m),\:\: m\in\mathbb{N}\}$ belonging to $\{\tilde{\lambda}^+(\kappa_m),\:\: m\in\mathbb{N}\}$ is finite, the proposition is true. If not, there exist two strictly increasing sequences  $\varphi,\psi :\mathbb{N}\to\mathbb{N}$ such that:
\begin{equation}
	\label{egu}
	\lambda^-(\kappa_{\psi(m)})=\tilde{\lambda}^+(\kappa_{\varphi(m)}).
\end{equation}
	
	\medskip
	
\noindent	
We emphasize that the functions $\varphi$ and $\psi$ are built so that an integer $m\in\mathbb{N}$ which is not in the image of $\psi$ (respectively in that of $\varphi$) satisfies $\lambda^-(\kappa_m)=\tilde{\lambda}^-(\kappa_n)$ for some $n\in\mathbb{N}$ (respectively $\lambda^+(\kappa_n)=\tilde{\lambda}^+(\kappa_m)$ for some $n\in\mathbb{N}$).

		$\quad$

\noindent 
Replacing  $\lambda^+(\kappa_{\varphi(m)})$ and $\tilde{\lambda}^-(\kappa_{\psi(m)})$ by their asymptotics in the equality (\ref{egu}), we have :
\begin{equation*}
\label{asympt}
\frac{\varphi(m)}{\sqrt{f(0)}}+\frac{\ln(h)'(0)}{4\sqrt{f(0)}}+\frac{n-2}{2\sqrt{f(0)}}+O\bigg(\frac{1}{\varphi(m)}\bigg)=\frac{\psi(m)}{\sqrt{f(1)}}-\frac{\ln(h)'(1)}{4\sqrt{f(1)}}+\frac{n-2}{2\sqrt{f(1)}}+O\bigg(\frac{1}{\psi(m)}\bigg).
\end{equation*}
Setting $\displaystyle C=-\frac{\ln(h)'(1)}{4\sqrt{f(1)}}-\frac{\ln(h)'(0)}{4\sqrt{f(0)}}+\frac{n-2}{2\sqrt{f(1)}}-\frac{n-2}{2\sqrt{f(0)}}$ and noticing that the previous equality implies that $\displaystyle O\bigg(\frac{1}{\psi(m)}\bigg)=O\bigg(\frac{1}{\varphi(m)}\bigg)$, one can write (\ref{egu}) as follows:
\begin{equation}
	\label{asympt}
	\frac{\varphi(m)}{\sqrt{f(0)}}=\frac{\psi(m)}{\sqrt{f(1)}}+C+O\bigg(\frac{1}{\varphi(m)}\bigg).
\end{equation}

\vspace{0.5cm}
\noindent
Now, we have the following Lemma:

\begin{lemma}
		\label{image}
		There exists $m_0\in\mathbb{N}$ such that for all $m\ge m_0, \ \psi(m+1)\ge\psi(m)+ 2$.
\end{lemma}

\begin{proof}
		Set $\displaystyle A=\frac{\sqrt{f(1)}}{\sqrt{f(0)}}>1$ and $C'=-\sqrt{f(1)}C$. It follows from (\ref{asympt}) that:
		\begin{equation*}
		\psi(m)=A\varphi(m)+C'+O\bigg(\frac{1}{\varphi(m)}\bigg).
		\end{equation*}
		Assume $\psi(m+1)=\psi(m)+1$. Then :
		\begin{equation*}
		\begin{aligned}
		\psi(m)+1=\psi(m+1)&=A\varphi(m+1)+C'+O\bigg(\frac{1}{\varphi(m)}\bigg)\\
		&\ge A(\varphi(m)+1)+C'+O\bigg(\frac{1}{\varphi(m)}\bigg)\\
		&=A\varphi(m)+C'+A+O\bigg(\frac{1}{\varphi(m)}\bigg)\\
		&=\psi(m)+A+O\bigg(\frac{1}{\varphi(m)}\bigg).
		\end{aligned}
		\end{equation*}
Thus, we get:
		\begin{equation*}
		1\ge A+O\bigg(\frac{1}{\varphi(m)}\bigg)
		\end{equation*}
which is clearly false for $m$  large enough.	
\end{proof}

\medskip

\noindent 
Consequently, the range of $\psi$  doesn't contain two consecutive integers. We deduce from this the following Lemma :

\medskip

\begin{lemma}
	\label{lemman}
Denote $a_1,a_2,...$ the sequence of all integers that are not in the range of $\psi$. There exists $C>0$ such that this sequence satisfies :
\begin{equation}
\label{inan}
	a_m\le 2m+C.
\end{equation}
\end{lemma}

\begin{proof}

\noindent We set $p_0=\psi(m_0)$ and $C:= a_{p_0} -2p_0$. Clearly, (\ref{inan}) is true for $m=p_0$. Now, assume that (\ref{inan}) is satisfied for a fixed $m\ge p_0$. Then :
\begin{enumerate}
	\item[$\bullet$] either $a_m+1$ is not in the range of $\psi$ and so : $a_{m+1}=a_m+1$,
	\item[$\bullet$] or $a_m+1$ is in the range of $\psi$. Then $a_m+2$ is not in the image of $\psi$ since it does not contain consecutive integers and consequently $a_{m+1}=a_{m}+2$.
\end{enumerate}
In both cases, we get $\displaystyle a_{m+1}\le 2(m+1)+C$. Then, the proof of the Lemma follows by a standard induction argument.
\end{proof}

\noindent
In particular, Lemma \ref{lemman} shows that :
\begin{equation*}
\sum_{n\in\mathbb{N}}\frac{1}{a_n}=+\infty.
\end{equation*}

\medskip

\noindent 
Thus, setting $\mathcal{L}=\{a_n,\:\: n\in\mathbb{N}\}$, we have :

\medskip

\begin{equation*}
\begin{aligned}
&\{\lambda^-(\kappa_m),\:\: m\in\mathcal{L}\}\subset \{\tilde{\lambda}^-(\kappa_m),\:\: m\in\mathbb{N}\}\\
\end{aligned}
\end{equation*}
which concludes  the proof of the Proposition.
\end{proof}

\vspace{0.5cm}
\noindent
As previously, we have to treat differently the case of the dimension $n=2$ and $n\geq 3$.


\subsubsection*{The two dimensional case:}

\medskip

	\begin{lemma}
		\label{lemma_eq_2}
	For all $m$ $\in\mathcal{L}$, one has:
	\begin{equation*}
	\lambda^-(\kappa_m)=\tilde{\lambda}^-(\kappa_m).
	\end{equation*}
	\end{lemma}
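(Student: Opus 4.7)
The plan is to use the inclusion from Proposition \ref{inclusion_1} combined with an integrality argument. By Proposition \ref{inclusion_1}, for every $m\in\mathcal{L}$ there exists $\sigma(m)\in\mathbb{N}$ such that $\lambda^-(\kappa_m)=\tilde\lambda^-(\kappa_{\sigma(m)})$. The goal is to show that $\sigma(m)=m$ for all sufficiently large $m\in\mathcal{L}$; since removing a finite set from $\mathcal{L}$ preserves the divergence $\sum_{m\in\mathcal{L}} 1/m=+\infty$, this will give the lemma.

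Two simplifications specific to $n=2$ make the argument clean. First, $h=f^{n-2}\equiv 1$, so the correction term $\frac{\ln(h)'(1)}{4\sqrt{f(1)}}$ appearing in Lemma \ref{vp} vanishes identically. Second, the eigenvalues of $-\Delta_{g_\mathbb{S}}$ on $\mathbb{S}^1$ counted without multiplicity are exactly $\kappa_m=m(m+n-2)=m^2$, so $\sqrt{\kappa_m}=m$ with no remainder. Lemma \ref{vp} then yields
\[
\lambda^-(\kappa_m)=\frac{m}{\sqrt{f(1)}}+O\!\left(\frac{1}{m}\right),\qquad \tilde\lambda^-(\kappa_{\sigma(m)})=\frac{\sigma(m)}{\sqrt{\tilde f(1)}}+O\!\left(\frac{1}{\sigma(m)}\right).
\]
Since we are working in the subcase $f(0)=\tilde f(0)$ and $f(1)=\tilde f(1)$, equating these two asymptotic expressions and multiplying by $\sqrt{f(1)}$ gives
\[
m-\sigma(m)=O\!\left(\frac{1}{m}\right)+O\!\left(\frac{1}{\sigma(m)}\right).
\]

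As $m\to\infty$ in $\mathcal{L}$, strict monotonicity of $(\tilde\lambda^-(\kappa_j))$ for large $j$ forces $\sigma(m)\to\infty$, hence the right-hand side tends to zero. But $m-\sigma(m)$ is an integer, so for all $m\in\mathcal{L}$ beyond some threshold $m_0$ we must have $\sigma(m)=m$, i.e.\ $\lambda^-(\kappa_m)=\tilde\lambda^-(\kappa_m)$. Replacing $\mathcal{L}$ by $\mathcal{L}\cap[m_0,+\infty)$ preserves the Müntz divergence and yields the stated conclusion. No real obstacle is encountered here; the only point to check is that $\sigma(m)\to\infty$, which is immediate from $\lambda^-(\kappa_m)\to\infty$ and the injectivity of $\tilde\lambda^-$ on a cofinite set. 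This clean integrality argument is precisely what fails for $n\ge 3$, where the nontrivial constants $\frac{\ln(h)'(1)}{4\sqrt{f(1)}}$ and $\frac{n-2}{2\sqrt{f(1)}}$ contribute an $O(1)$ shift that must be absorbed, forcing the more elaborate Nevanlinna-based treatment.
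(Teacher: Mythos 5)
Your proof is correct and follows essentially the same route as the paper: invoke Proposition \ref{inclusion_1} to get $\ell(m)$ with $\lambda^-(\kappa_m)=\tilde\lambda^-(\kappa_{\ell(m)})$, plug in the asymptotics of Lemma \ref{vp} (noting $h\equiv 1$ when $n=2$ so the $\ln(h)'$ term vanishes), and use integrality of $m-\ell(m)$ to force $\ell(m)=m$ for $m$ large. One small correction to your closing aside: for $n\ge 3$ the paper does \emph{not} abandon the integrality argument in favour of Nevanlinna; it runs the same argument and uses the hypothesis $f,\tilde f\in\mathcal{C}_b$ to bound the residual $O(1)$ shift by $1/2$, again concluding by integrality (see the proof of Lemma \ref{premeg} and the paragraph following Lemma \ref{lemman}). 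Nevanlinna's theorem enters later, to upgrade the Müntz-density conclusion to an identity on $\Pi^+$.
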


\begin{proof}

\noindent By construction, for each element $m$ of $\mathcal{L}$, there exists $\ell(m)\in\mathbb{N}$ such that :
\begin{equation*}
\displaystyle \lambda^-(\kappa_m)=\tilde{\lambda}^-(\kappa_{\ell(m)}).
\end{equation*}
Using Lemma \ref{vp}, we get:
\begin{equation*}
m=\ell(m)+O\bigg(\frac{1}{m}\bigg)+O\bigg(\frac{1}{\ell(m)}\bigg),
\end{equation*}
which implies $m=\ell(m)$ for $m$ large enough.
\end{proof}

$\quad$

\noindent Let us consider again the functions $B$ and $C$ defined in (\ref{BC_1}) and (\ref{BC_2}). Setting
\begin{equation*}
R(z)=B(z)^2-4C(z),
\end{equation*} we get:
\begin{equation*}
\lambda^{\pm}(\kappa_m)=\frac{1}{2}\bigg(B(\kappa_m)\pm\sqrt{R(\kappa_m)}\bigg).
\end{equation*}

\noindent We can define on $\Pi^+$ the function $\lambda^-(z)$ by $\displaystyle \lambda^-(z)= \frac{1}{2}\bigg(B(z)-\sqrt{R(z)}\bigg)$.

\begin{lemma}
	\label{alt2}
	For every $t\in\mathbb{R}_+$ large enough, we have the alternative :
	
	\medskip
	
	\begin{center}
		$\lambda^-(t)=\tilde{\lambda}^-(t)\quad$ or $\quad\lambda^+(t)=\tilde{\lambda}^+(t)$.
	\end{center}
\end{lemma}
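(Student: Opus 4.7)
My strategy is to encode all four possible pairings between $\{\lambda^-,\lambda^+\}$ and $\{\tilde{\lambda}^-,\tilde{\lambda}^+\}$ inside a single meromorphic function on $\Pi^+$, apply Nevanlinna's theorem to force it to vanish identically, and then use the asymptotic separation of the two ``branches'' to isolate the diagonal equalities. Concretely, for $z \in \Pi^+$ outside the zero set of $\Delta \tilde{\Delta}$, the values $\lambda^{\pm}(z)$ are the roots of $X^2 - B(z) X + C(z) = 0$ and $\tilde{\lambda}^{\pm}(z)$ those of $X^2 - \tilde{B}(z) X + \tilde{C}(z) = 0$. I form
\begin{equation*}
F(z) \,:=\, \prod_{\varepsilon,\varepsilon'\in\{-,+\}} \bigl(\lambda^{\varepsilon}(z)-\tilde{\lambda}^{\varepsilon'}(z)\bigr),
\end{equation*}
which is the resultant of the two quadratics and hence a polynomial expression in $B(z), C(z), \tilde{B}(z), \tilde{C}(z)$ requiring no choice of square root. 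By (\ref{BC_1})--(\ref{BC_2}), $F$ is meromorphic on $\Pi^+$ with poles contained in the zeros of $\Delta$ and $\tilde{\Delta}$.

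I then clear denominators by setting $G(z) := [\Delta(z) \tilde{\Delta}(z)]^{N} F(z)$ with $N$ large enough to absorb all occurrences of $\Delta$ and $\tilde{\Delta}$ in the denominators, and consider $H(z) := G(z^2)$. From Proposition \ref{estim} and Theorem \ref{Simon}, $H$ is entire and satisfies an exponential bound of the type $|H(z)| \leq C e^{A \Re(z)}$ on $\Pi^+$, so $H \in \mathcal{N}(\Pi^+)$. Lemma \ref{lemma_eq_2} gives $\lambda^{-}(\kappa_m) = \tilde{\lambda}^{-}(\kappa_m)$ for every $m \in \mathcal{L}$, which makes the first factor of $F(\kappa_m)$ vanish and therefore $H(\sqrt{\kappa_m}) = 0$. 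In dimension $n = 2$, $\kappa_m = m^2$ so $\sqrt{\kappa_m} = m$ and $\sum_{m \in \mathcal{L}} 1/\sqrt{\kappa_m} = +\infty$; Theorem \ref{nev} then forces $H \equiv 0$ on $\Pi^+$. In particular $F(t) = 0$ for all $t \in \mathbb{R}_+$ large enough.

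It remains to identify which of the four factors can actually vanish at large real $t$. Since $f(0)=\tilde{f}(0)$, $f(1)=\tilde{f}(1)$, and $f(0)<f(1)$, the asymptotics of Lemma \ref{vp} give
\begin{equation*}
\lambda^{-}(t), \ \tilde{\lambda}^{-}(t) \,=\, \frac{\sqrt{t}}{\sqrt{f(1)}} + O(1), \qquad \lambda^{+}(t), \ \tilde{\lambda}^{+}(t) \,=\, \frac{\sqrt{t}}{\sqrt{f(0)}} + O(1), \qquad t \to +\infty,
\end{equation*}
so that the cross-differences $\lambda^{-}(t) - \tilde{\lambda}^{+}(t)$ and $\lambda^{+}(t) - \tilde{\lambda}^{-}(t)$ grow like $\sqrt{t}\bigl(1/\sqrt{f(0)} - 1/\sqrt{f(1)}\bigr) \neq 0$, hence are nonzero for $t$ large. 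Thus $F(t) = 0$ can only come from $\lambda^{-}(t) = \tilde{\lambda}^{-}(t)$ or $\lambda^{+}(t) = \tilde{\lambda}^{+}(t)$, which is the stated alternative.

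The main technical point I expect is the verification that $H$ lies in $\mathcal{N}(\Pi^+)$: one must check that multiplying $F(z^2)$ by a sufficiently high power of $\Delta(z^2)\tilde{\Delta}(z^2)$ produces an entire function with at most exponential growth of the right form. This is not deep but requires careful bookkeeping of the estimates of Proposition \ref{estim}; once done, the rest of the argument reduces to a single application of Nevanlinna's theorem followed by the elementary asymptotic separation argument above.
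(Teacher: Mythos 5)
Your proof is correct and follows essentially the same path as the paper's: form a single auxiliary function vanishing at the $\kappa_m$, multiply by a power of $\Delta\tilde{\Delta}$ to land in $\mathcal{N}(\Pi^+)$, apply Nevanlinna to force it to vanish identically, and then discard the cross pairings using the asymptotics of Lemma \ref{vp} and $f(0)\neq f(1)$. Your function $F$ — the resultant $\prod_{\varepsilon,\varepsilon'}(\lambda^{\varepsilon}-\tilde{\lambda}^{\varepsilon'})$ — is exactly $\tfrac{1}{16}$ of the paper's $g_1$, which the paper produces instead by squaring $B-\sqrt{R}=\tilde B-\sqrt{\tilde R}$ twice; packaging it as a resultant of the two characteristic quadratics is a cleaner way to see the same polynomial identity, but the substance of the argument is identical.
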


\begin{proof}
From Lemma \ref{lemma_eq_2}, there is $\mathcal{L}\subset\mathbb{N}$ satisfying $\displaystyle \sum_{m\in\mathcal{L}}\frac{1}{m}=+\infty$ and such that

\begin{equation*}
	\forall m\in\mathcal{L},\qquad B(\kappa_m)-\sqrt{R(\kappa_m)}=\tilde{B}(\kappa_m)-\sqrt{\tilde{R}(\kappa_m)}.
	\end{equation*}
One has
	\begin{equation*}
	\bigg(B(\kappa_{m})-\tilde{B}(\kappa_{m})\bigg)^2=\bigg(\sqrt{R(\kappa_{m})}-\sqrt{\tilde{R}(\kappa_{m})}\bigg)^2,
	\end{equation*}
and then
\begin{equation*}
	\underbrace{\bigg[\bigg(B(\kappa_{m})-\tilde{B}(\kappa_{m})\bigg)^2-R(\kappa_{m})-\tilde{R}(\kappa_m)\bigg]^2-4R(\kappa_{m})\tilde{R}(\kappa_{m})}_{:=g_1(\kappa_m)}=0.
\end{equation*}
For $z \in \Pi^+$, let us define:
\begin{equation*}
	g_2(z)=\Delta(z^2)^4\tilde{\Delta}(z^2)^4g_1(z^2).
\end{equation*}
	
	\medskip
	
	\noindent Then $g_2$ is identically zero. Indeed :
	
	\medskip
	
	\begin{enumerate}[label=\alph*), align=left, leftmargin=*, noitemsep]
		\item[$\bullet$] $g_2$ is holomorphic on $\Pi^+$.
		
		\medskip
		
		\item[$\bullet$] $g_2\in\mathcal{N}(\Pi^+)$ thanks to the estimates of Proposition \ref{estim}.
		
		\medskip
		
		\item[$\bullet$] We have $g_2(\sqrt{\kappa_m})=0$ for every $m\in\mathcal{L}$, which enable us to conclude, by Nevanlinna's theorem, that
		\begin{equation*}
		g_2\equiv 0 \:\: {\rm{on}}\:\:\Pi^+.
		\end{equation*}
		
	\end{enumerate}
We have obtained:
\begin{equation*}
	\forall z\in\Pi^+,\qquad\bigg[\bigg(B(z^2)-\tilde{B}(z^2)\bigg)^2-R(z^2)-\tilde{R}(z^2)\bigg]^2-4R(z^2)\tilde{R}(z^2)=0.
\end{equation*}
It easily follows that for $z\in\Pi^+$, we have four alternatives:
	\begin{equation*}
	\lambda^-(z^2) = \tilde{\lambda}^-(z^2),\quad\lambda^+(z^2) = \tilde{\lambda}^-(z^2),\quad \lambda^-(z^2) =\tilde{\lambda}^+(z^2)\quad {\rm{or}}\quad \lambda^+(z^2) = \tilde{\lambda}^+(z^2).
	\end{equation*}

\noindent We can note that the asymptotics of Lemma \ref{vp}, involving $\lambda^\pm(\kappa_m)$, can be extended on $\mathbb{R}_+$ for $\lambda^\pm(t)$ (the proof remains exactly the same). Now, if a real sequence of positive numbers $(t_m)$ satisfies, for instance, $\displaystyle\lambda^+(t_m^2) = \tilde{\lambda}^-(t_m^2)$ with $t_m\to +\infty$, then, using Lemma \ref{vp} with $t_m^2$ instead of $\kappa_m$ we get $f(0)=f(1)$, and that contradicts our hypothesis. We use the same argument to exclude the third case when $z$ is a positive real number large enough. Finally, we have for $t$ positive large enough :
	\begin{equation*}
	\lambda^-(t) = \tilde{\lambda}^-(t)\quad {\rm{or}}\quad \lambda^+(t) = \tilde{\lambda}^+(t).
	\end{equation*}
\end{proof}
\noindent \textit{Case 1} : Assume now there exists $x\in\mathbb{R}_+$ as large as we want such that $\displaystyle \lambda^-(x) \ne \tilde{\lambda}^-(x)$. By a standard continuity argument, we can find an interval $I$ centered in $x$ such that :

\begin{center}
$\displaystyle\forall t\in I,\: \lambda^-(t) \ne \tilde{\lambda}^-(t)$,
\end{center} 
which implies necessarily that $\displaystyle \forall t\in I,\: \lambda^+(t) = \tilde{\lambda}^+(t)$. Moreover, there is $L>0$ such that the real function
\begin{equation*}
t\mapsto\lambda^+(t)-\tilde{\lambda}^+(t)
\end{equation*}  is analytic on the interval $[L,+\infty[$. Thus, $\forall t\ge L$, one has $\lambda^+(t) = \tilde{\lambda}^+(t)$ by the analytic continuation principle. One deduces there exists $m_0\in\mathbb{N}$ such that, for $m\in\mathcal{L}$ and $m\ge m_0$:
$$
\lambda^+(\kappa_m) = \tilde{\lambda}^+(\kappa_m) \ \quad\mathrm{and}\quad\  \lambda^-(\kappa_m) = \tilde{\lambda}^-(\kappa_m).
$$
Without loss of generality, assume that $\min\mathcal{L}$ is greater than $m_0$. As a by product, one gets:
	\begin{equation*}
\forall m\in\mathcal{L},\:\:\left\{
\begin{aligned}
& \lambda^-(\kappa_{m})+\lambda^+(\kappa_{m})=\tilde{\lambda}^-(\kappa_{m})+\tilde{\lambda}^+(\kappa_{m}) \\
& \lambda^-(\kappa_{m})\lambda^+(\kappa_{m})=\tilde{\lambda}^-(\kappa_{m})\tilde{\lambda}^+(\kappa_{m}) \end{aligned}\right.
\end{equation*}
\noindent Hence, we have proved the same result as the one of Proposition \ref{eq_trans_1}. We can deduce similarly that both previous equalities are in fact true for every $m\in\mathbb{N}$, i.e 
\begin{center}
	$\forall m\in\mathbb{N},\quad$ Tr$(\Lambda^m_g)=\: $Tr$(\Lambda^m_{\tilde{g}})\quad$ and $\quad\det(\Lambda^m_g)=\det(\Lambda^m_{\tilde{g}})$.
\end{center}
\noindent We are thus brought back to the third section.

$\quad$

\noindent \textit{Case 2} : For all $t\in\mathbb{R}$, $t$ large enough, we have $\lambda^-(t)=\tilde{\lambda}^-(t)$. In particular, for all $m\in\mathbb{N}$ large enough, $\lambda^-(\kappa_m)=\tilde{\lambda}^-(\kappa_m)$. Hence, for $m$ large enough, one also has $\lambda^+(\kappa_m)=\tilde{\lambda}^+(\kappa_m)$. Indeed, let $m\in\mathbb{N}$. If $\lambda^+(\kappa_m)\in \big(\tilde{\lambda}^+(\kappa_\ell)\big)$, there is $\ell(m)\in\mathbb{N}$ such that $\lambda^+(\kappa_m)=\tilde{\lambda}^+(\kappa_{\ell(m)})$. One can prove, as in the proof of Lemma \ref{lemma_eq_2} that $\ell(m)=m$. The same holds if we assume that $\tilde{\lambda}^+(\kappa_m)\in \big(\lambda^+(\kappa_\ell)\big)$. The only other option is that there are integers $p$ and $\ell$ such that 
\begin{equation*}
	\left\{\begin{aligned}
	&\lambda^+(\kappa_m) = \tilde{\lambda}^-(\kappa_p)\\
	&\tilde{\lambda}^+(\kappa_m)= \lambda^-(\kappa_\ell)
	\end{aligned}\right.
\end{equation*}
\noindent Then $\lambda^+(\kappa_m)-\tilde{\lambda}^+(\kappa_m)=\tilde{\lambda}^-(\kappa_p)-\lambda^-(\kappa_\ell)$, which implies
\begin{equation*}
	O\bigg(\frac{1}{m}\bigg)=\frac{p-\ell}{\sqrt{f(1)}}+O\bigg(\frac{1}{p}\bigg)+O\bigg(\frac{1}{\ell}\bigg).
\end{equation*}
\noindent Hence $p=\ell$ for $m,p,\ell$ large enough and so $\displaystyle \lambda^+(\kappa_m)=\tilde{\lambda}^+(\kappa_m)$. There is then $m_0\in\mathbb{N}$ such that
\begin{center}
	$\forall m\ge m_0,\quad$ Tr$(\Lambda^m_g)=\: $Tr$(\Lambda^m_{\tilde{g}})\quad$ and $\quad\det(\Lambda^m_g)=\det(\Lambda^m_{\tilde{g}})$.
\end{center}
\noindent We are brought back again to the third section (see Remark \ref{Remarque_importante_1} p. 13).

\subsubsection*{The case of the  dimension ${\bf n\ge 3}$.}

\medskip

\noindent The following arguments are more or less immediate adaptations of the two dimensional case. We just refer to it for details.

\medskip

\begin{lemma}
	There exists $\mathcal{L}\subset\mathbb{N}$ such that  $\displaystyle\sum_{m\in\mathcal{L}}\frac{1}{m}=+\infty$ and :
	\begin{equation*}
	\tilde{\lambda}^-(\kappa_{m})=\lambda^-(\kappa_m), \quad\forall m\in\mathcal{L}.
	\end{equation*}
\end{lemma}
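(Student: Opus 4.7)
The plan is to start from Proposition \ref{inclusion_1}, which already provides a subset $\mathcal{L}_0\subset\mathbb{N}$ satisfying the Müntz condition $\sum_{m\in\mathcal{L}_0}\frac{1}{m}=+\infty$ and the inclusion $\{\lambda^-(\kappa_m),\,m\in\mathcal{L}_0\}\subset\{\tilde{\lambda}^-(\kappa_m),\,m\in\mathbb{N}\}$. For each $m\in\mathcal{L}_0$, denote by $\ell(m)\in\mathbb{N}$ an index such that $\lambda^-(\kappa_m)=\tilde{\lambda}^-(\kappa_{\ell(m)})$. All the work then consists in showing that $\ell(m)=m$ for $m\in\mathcal{L}_0$ large enough; this is a direct analogue of Lemma \ref{premeg} for the present setting $f(0)\neq f(1)$.

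First, I would feed the Weyl asymptotic $\sqrt{\kappa_m}=m+\frac{n-2}{2}+O(1/m)$ into the refined asymptotic of Lemma \ref{vp} to obtain
\begin{equation*}
\lambda^-(\kappa_m)=\frac{m}{\sqrt{f(1)}}+\frac{n-2}{2\sqrt{f(1)}}-\frac{\ln(h)'(1)}{4\sqrt{f(1)}}+O\bigg(\frac{1}{m}\bigg),
\end{equation*}
and the analogous expression for $\tilde{\lambda}^-(\kappa_{\ell(m)})$ with $\tilde{h}$, $\tilde{f}(1)$ in place of $h$, $f(1)$. Using the equality $f(1)=\tilde{f}(1)$ established in the previous lemma, the equation $\lambda^-(\kappa_m)=\tilde{\lambda}^-(\kappa_{\ell(m)})$ simplifies to
\begin{equation*}
m-\ell(m)=\frac{\ln(\tilde{h})'(1)-\ln(h)'(1)}{4}+O\bigg(\frac{1}{m}\bigg)+O\bigg(\frac{1}{\ell(m)}\bigg).
\end{equation*}

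Since $h=f^{n-2}$ and $\tilde{h}=\tilde{f}^{n-2}$, the right-hand side converges to the quantity
\begin{equation*}
\frac{n-2}{4}\bigg(\frac{\tilde{f}'(1)}{\tilde{f}(1)}-\frac{f'(1)}{f(1)}\bigg).
\end{equation*}
Here I invoke the hypothesis $f,\tilde{f}\in\mathcal{C}_b$, which gives $\big|\frac{f'(1)}{f(1)}\big|\leq\frac{1}{n-2}$ and $\big|\frac{\tilde{f}'(1)}{\tilde{f}(1)}\big|\leq\frac{1}{n-2}$. The triangle inequality then yields that the limit has absolute value at most $\frac{1}{2}$. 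Since $m-\ell(m)$ is an integer converging to a real number of modulus at most $\frac{1}{2}$, there exists $m_0\in\mathbb{N}$ such that $\ell(m)=m$ for every $m\in\mathcal{L}_0$ with $m\geq m_0$.

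Finally, I would set $\mathcal{L}:=\{m\in\mathcal{L}_0\,:\,m\geq m_0\}$; removing finitely many terms preserves the Müntz condition $\sum_{m\in\mathcal{L}}\frac{1}{m}=+\infty$, and by construction $\tilde{\lambda}^-(\kappa_m)=\lambda^-(\kappa_m)$ for all $m\in\mathcal{L}$. There is no real obstacle here: the entire argument is a routine adaptation of Lemma \ref{premeg}, the only substantive ingredient being the hypothesis $f,\tilde{f}\in\mathcal{C}_b$, which is precisely calibrated so that the integer-valued difference $m-\ell(m)$ is trapped in $(-1,1)$ asymptotically and hence forced to vanish.
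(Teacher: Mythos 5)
Your proof is correct and follows essentially the same route as the paper's: it starts from the set produced in Proposition \ref{inclusion_1}, feeds the Weyl asymptotic of $\kappa_m$ into Lemma \ref{vp}, and uses the bound from $\mathcal{C}_b$ to force the integer $m-\ell(m)$ to vanish, exactly as in the paper's adaptation of Lemma \ref{premeg}. A minor remark: the constant on the right-hand side should be $\frac{1}{4}\big(\ln(h)'(1)-\ln(\tilde{h})'(1)\big)$ rather than the reverse, but since only its absolute value matters the conclusion is unaffected.
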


\begin{proof}
By definition of the sequence $(a_n)$ defined in Lemma \ref{lemman}, there is another sequence of integers $(\tilde{a}_n)$ such that :
	\begin{equation*}
	\lambda^-(\kappa_{a_n})=\tilde{\lambda}^-(\kappa_{\tilde{a}_n})
	\end{equation*}
As in the proof of Lemma \ref{premeg}, we show that  $a_m=\tilde{a}_m$ and we set $\mathcal{L}=\{a_m,\: m\in\mathcal{L}\}$.
\end{proof}

\vspace{0.2cm}\noindent
We have the following Lemma (the proof is identical to the two dimensional case).
\begin{lemma}
For all $t\in\mathbb{R}_+$ large enough, we have the alternative :
\begin{equation*}
\lambda^-(t) = \tilde{\lambda}^-(t),\quad\quad \lambda^+(t) = \tilde{\lambda}^+(t).
\end{equation*}
\end{lemma}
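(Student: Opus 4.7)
The plan is to mirror exactly the argument of Lemma \ref{alt2} from the two-dimensional case, since the only essential ingredients were Lemma \ref{vp} (which remains valid in any dimension), the asymptotics of Proposition \ref{estim} (dimension-free), and the membership in the Nevanlinna class together with the divergent Müntz-type sum over $\mathcal{L}$. The additional constants $\frac{\ln(h)'(0)}{4\sqrt{f(0)}}$ and $\frac{\ln(h)'(1)}{4\sqrt{f(1)}}$ that appear in dimension $n\ge 3$ are harmless because they are absorbed into the already-present subleading terms and they do not affect the leading behavior $\sqrt{t}/\sqrt{f(i)}$ used to exclude the crossed alternatives.

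Concretely, starting from the preceding Lemma which produces $\mathcal{L}\subset\mathbb{N}$ with $\sum_{m\in\mathcal{L}} 1/m = +\infty$ and $\lambda^-(\kappa_m)=\tilde{\lambda}^-(\kappa_m)$ for $m\in\mathcal{L}$, I would write
\begin{equation*}
\lambda^-(z) = \tfrac{1}{2}\bigl(B(z)-\sqrt{R(z)}\bigr), \qquad R(z):=B(z)^2-4C(z),
\end{equation*}
using the meromorphic functions $B,C$ defined in \eqref{BC_1}--\eqref{BC_2}. The equality $\lambda^-(\kappa_m)=\tilde\lambda^-(\kappa_m)$ reads $B(\kappa_m)-\tilde{B}(\kappa_m) = \sqrt{R(\kappa_m)}-\sqrt{\tilde{R}(\kappa_m)}$; squaring once and then rearranging, then squaring again to remove the remaining square root, yields
\begin{equation*}
\Bigl[\bigl(B(\kappa_m)-\tilde{B}(\kappa_m)\bigr)^2 - R(\kappa_m) - \tilde{R}(\kappa_m)\Bigr]^2 - 4R(\kappa_m)\tilde{R}(\kappa_m) = 0
\end{equation*}
for every $m\in\mathcal{L}$.

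Next I would clear denominators by introducing
\begin{equation*}
g(z) := \Delta(z^2)^4\,\tilde{\Delta}(z^2)^4\,\Bigl\{\bigl[(B(z^2)-\tilde{B}(z^2))^2 - R(z^2)-\tilde{R}(z^2)\bigr]^2 - 4\,R(z^2)\tilde{R}(z^2)\Bigr\}.
\end{equation*}
Since $B$ has denominator $\Delta$ and $R$ has denominator $\Delta^2$ (and similarly for the tilde quantities), the factor $\Delta^4\tilde{\Delta}^4$ makes $g$ holomorphic on $\Pi^+$. The exponential-type bounds of Proposition \ref{estim} guarantee $g\in\mathcal{N}(\Pi^+)$. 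As $g(\sqrt{\kappa_m})=0$ for every $m\in\mathcal{L}$ and $\sqrt{\kappa_m}\sim m$, one has $\sum_{m\in\mathcal{L}} 1/\sqrt{\kappa_m}=+\infty$, so Theorem \ref{nev} forces $g\equiv 0$ on $\Pi^+$. Equivalently, for every $z\in\Pi^+$ one of the four identities $\lambda^\pm(z^2)=\tilde{\lambda}^\pm(z^2)$ (all sign combinations) holds.

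Finally, restricting to $z=\sqrt{t}$ with $t>0$ large, I would discard the two crossed alternatives $\lambda^+(t)=\tilde\lambda^-(t)$ and $\lambda^-(t)=\tilde\lambda^+(t)$: by Lemma \ref{vp} applied on $\mathbb{R}_+$ (the asymptotic extends verbatim from the discrete values $\kappa_m$ since it comes from the Simon-type expansion of $M$ and $N$), these would force $\sqrt{t}/\sqrt{f(0)}$ and $\sqrt{t}/\sqrt{f(1)}$ to coincide up to $O(1)$ along an unbounded sequence, contradicting our standing assumption $f(0)\ne f(1)$ (combined with the already established $f(0)=\tilde{f}(0)$, $f(1)=\tilde{f}(1)$). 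The remaining two alternatives give exactly the statement. The only mildly delicate point, as in the two-dimensional case, is checking that $g$ is holomorphic (not just meromorphic) on $\Pi^+$ after multiplication by $\Delta^4\tilde{\Delta}^4$, which is straightforward from the pole orders of $B$, $C$.
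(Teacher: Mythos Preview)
Your proposal is correct and follows essentially the same route as the paper: the paper explicitly states that the proof is identical to that of Lemma~\ref{alt2} in the two-dimensional case, and your argument reproduces that proof step by step (square twice, clear denominators with $\Delta^4\tilde{\Delta}^4$, apply Nevanlinna on the Müntz set $\{\sqrt{\kappa_m}:m\in\mathcal{L}\}$, then use the leading asymptotics with $f(0)\ne f(1)$ to discard the crossed alternatives).
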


\vspace{0.5cm}\noindent
We deduce then, for all $m\in\mathcal{L}$ :
\begin{center}
$\lambda^-(\kappa_{m}) = \tilde{\lambda}^-(\kappa_{m})\quad$ and $\quad \lambda^+(\kappa_{m}) = \tilde{\lambda}^+(\kappa_m)$.
\end{center}

\noindent 
It follows that :

\begin{center}
	$\forall m\in\mathbb{N},\:\:$ Tr$(\Lambda^m_g)=\: $Tr$(\Lambda^m_{\tilde{g}})$ and $\det(\Lambda^m_g)=\det(\Lambda^m_{\tilde{g}})$.
\end{center}

$\quad$

\noindent {\bf Aknowledgements:} The author would like to thank Thierry Daud\'e and Fran\c{c}ois Nicoleau for their encouragements and helpful discussions. The author also thanks the referees for valuable remarks and comments that improved the first version of this paper.

\bibliographystyle{acm}
\bibliography{bibliographie}

\begin{thebibliography}{10}

\bibitem{agranovich2006mixed}
{\sc Agranovich, M.}
\newblock On a mixed {P}oincar{\'e}-{S}teklov type spectral problem in a
  {L}ipschitz domain.
\newblock {\em Russian Journal of Mathematical Physics 13}, 3 (2006), 239--244.

\bibitem{colbois2019steklov}
{\sc Colbois, B., Girouard, A., and Hassannezhad, A.}
\newblock The {S}teklov and {L}aplacian spectra of {R}iemannian manifolds with
  boundary.
\newblock {\em Journal of Functional Analysis\/} (2019), 108409.

\bibitem{daude2015non}
{\sc Daud{\'e}, T., {K}amran, N., and {N}icoleau, F.}
\newblock Non uniqueness results in the anisotropic {C}alder{\'o}n problem with
  {D}irichlet and {N}eumann data measured on disjoint sets.
\newblock In {\em {A}nnales de l’{I}nstitut {F}ourier\/} (2019), vol.~49.

\bibitem{daude2019hidden}
{\sc Daud{\'e}, T., Kamran, N., and Nicoleau, F.}
\newblock On the hidden mechanism behind non-uniqueness for the anisotropic
  {C}alder{\'o}n problem with data on disjoint sets.
\newblock In {\em Annales Henri Poincar{\'e}\/} (2019), vol.~20, Springer,
  pp.~859--887.

\bibitem{daude2018stability}
{\sc Daud{\'e}, T., Kamran, N., and Nicoleau, F.}
\newblock Stability in the inverse {S}teklov problem on warped product
  riemannian manifolds.
\newblock {\em The Journal of Geometric Analysis\/} (2019).

\bibitem{ferreira2009limiting}
{\sc Ferreira, D. D.~S., Kenig, C.~E., Salo, M., and Uhlmann, G.}
\newblock Limiting {C}arleman weights and anisotropic inverse problems.
\newblock {\em Inventiones mathematicae 178}, 1 (2009), 119--171.

\bibitem{girouard2019steklov}
{\sc Girouard, A., Lagac{\'e}, J., Polterovich, I., and Savo, A.}
\newblock The {S}teklov spectrum of cuboids.
\newblock {\em Mathematika 65}, 2 (2019), 272--310.

\bibitem{girouard2017spectral}
{\sc Girouard, A., and Polterovich, I.}
\newblock Spectral geometry of the {S}teklov problem (survey article).
\newblock {\em Journal of Spectral Theory 7}, 2 (2017), 321--360.

\bibitem{jollivet2014inverse}
{\sc Jollivet, A., and Sharafutdinov, V.}
\newblock On an inverse problem for the {S}teklov spectrum of a {R}iemannian
  surface.
\newblock {\em Contemp. Math 615\/} (2014), 165--191.

\bibitem{kohn1984identification}
{\sc Kohn, R.~V., and Vogelius, M.}
\newblock Identification of an unknown conductivity by means of measurements at
  the boundary.
\newblock In {\em SIAM-AMS Proceedings\/} (1984), American Mathematical Soc.

\bibitem{lionheart1997conformal}
{\sc Lionheart, W.}
\newblock Conformal uniqueness results in anisotropic electrical impedance
  imaging.
\newblock {\em Inverse Problems 13}, 1 (1997), 125.

\bibitem{parzanchevski2013g}
{\sc Parzanchevski, O.}
\newblock On $ g $-sets and isospectrality.
\newblock In {\em Annales de l'Institut Fourier\/} (2013), vol.~63,
  pp.~2307--2329.

\bibitem{poschel1987inverse}
{\sc {P}oschel, J., and {T}rubowitz, E.}
\newblock {\em Inverse spectral theory}, vol.~130.
\newblock Academic Press, 1987.

\bibitem{provenzano2019weyl}
{\sc Provenzano, L., and Stubbe, J.}
\newblock Weyl-type bounds for {S}teklov eigenvalues.
\newblock {\em Journal Of Spectral Theory 9}, ARTICLE (2019), 349--377.

\bibitem{ramm1999inverse}
{\sc Ramm, A.}
\newblock An inverse scattering problem with part of the fixed-energy phase
  shifts.
\newblock {\em Communications in mathematical physics 207}, 1 (1999), 231--247.

\bibitem{safarov1997asymptotic}
{\sc Safarov, J., and Vassilev, D.}
\newblock {\em The asymptotic distribution of eigenvalues of partial
  differential operators}, vol.~155.
\newblock American Mathematical Soc., 1997.

\bibitem{salo2013calderon}
{\sc Salo, M.}
\newblock The {C}alder{\'o}n problem on {R}iemannian manifolds.
\newblock {\em Inverse problems and applications: inside out. II, Math. Sci.
  Res. Inst. Publ 60\/} (2013), 167--247.

\bibitem{shubin1987pseudodifferential}
{\sc Shubin, M.~A.}
\newblock {\em Pseudodifferential operators and spectral theory}, vol.~200.
\newblock Springer, 1987.

\bibitem{simon1999new}
{\sc Simon, B.}
\newblock A new approach to inverse spectral theory, {I.} fundamental
  formalism.
\newblock {\em Annals of Mathematics-Second Series 150}, 3 (1999), 1029--1058.

\bibitem{uhlmann2009electrical}
{\sc Uhlmann, G.}
\newblock Electrical impedance tomography and {C}alder{\'o}n's problem.
\newblock {\em Inverse problems 25}, 12 (2009), 123011.

\end{thebibliography}


  \end{document}